\renewcommand{\geq}{\geqslant}
\renewcommand{\leq}{\leqslant}
\numberwithin{equation}{section}
\numberwithin{equation}{subsection}
\newtheorem{proposition}{Proposition}[subsection]
\newtheorem{corollary}[proposition]{Corollary}
\newtheorem{lemma}[proposition]{Lemma}
\newtheorem{question}[proposition]{Open Question}
\newtheorem*{main-theorem}{Main Theorem}
\newtheorem*{theorem*}{Theorem}
\theoremstyle{definition}
\newtheorem{remark}[proposition]{Remark}
\newtheorem*{remark*}{Remark}
\newcommand{\Cal}{\mathcal}
\newcommand{\wcH}{\widetilde{\Cal{H}}}
\newcommand{\sk}{$\text{}$ \newline}
\def\Span {\operatorname{Span}}
\def\Spec {\operatorname{Spec}}
\def\Tr {\operatorname{Tr}}
\def\phi{\varphi}
\def\pa {\partial}
\def\ZZ{{\mathbb Z}}
\def\Rr{{\mathbb R}}
\def\CC{{\mathbb C}}
\def\Re{\,\mathrm{Re}\,}
\def\Im{\,\mathrm{Im}\,}
\def\diam{\mathrm{diam}\,}
\def\dim{\mathrm{dim}\,}
\def\SS{{\mathbb S}}
\def\phi{\varphi}
\def\dist{\operatorname{dist}}
\def\be{\begin{eqnarray*}}
\def\ee{\end{eqnarray*}}
\def\ben{\begin{eqnarray}}
\def\een{\end{eqnarray}}
\def\beq{\begin{equation}}
\def\eeq{\end{equation}}
\def\EE{\mathcal{E}}
\def\L2R{L_{\text{Rest}}^2}
\def\11{\mathds{1}}
\def\HH{\mathbb{H}}
\def\RR{\mathbb{R}}
\def\L2c{L^2_{\text{comp}}}
\def\calH{\mathcal{H}}
\def\II{\mathcal{I}}
\def\EE{\mathcal{E}}
\def\11{\mathbb{1}}
\def\Vol{\text{Vol}}
\begin{document}

\title[Vortices]{Higher Dimensional Vortex Standing Waves for Nonlinear
Schr{\"o}dinger Equations}

\author[J.L. Marzuola]{Jeremy L. Marzuola}
\email{marzuola@math.unc.edu}
\address{Department of Mathematics, UNC-Chapel Hill \\ CB\#3250
  Phillips Hall \\ Chapel Hill, NC 27599}

\author[M. Taylor]{Michael E. Taylor}
\email{met@math.unc.edu}
\address{Department of Mathematics, UNC-Chapel Hill \\ CB\#3250
  Phillips Hall \\ Chapel Hill, NC 27599}

\subjclass[2000]{35J61}
\keywords{Nonlinear Schr{\"o}dinger equation, standing waves, vortices,
symmetry}

\begin{abstract}
We study standing wave solutions to nonlinear Schr{\"o}dinger equations,
on a manifold with a rotational symmetry, which
transform in a natural fashion under
the group of rotations. We call these vortex
solutions.  They are higher dimensional versions of vortex standing waves
that have been studied on the Euclidean plane.  We focus on two types of
vortex solutions, which we call spherical vortices and axial vortices.
\end{abstract}

\maketitle

\section{Introduction}
\label{sec:vortintro}

A standing wave solution to the nonlinear Schr\"odinger equation
\begin{equation}
\label{eqn:nls}
i v_t + \Delta v + |v|^{p-1} v = 0
\end{equation}
is a solution of the form
\begin{equation}
\label{eqn:soliton}
v(t,x) = e^{i \lambda t} u(x),
\end{equation}
where $u$ solves the nonlinear elliptic partial differential equation
\begin{equation}
\label{eqn:nlbs}
-\Delta u + \lambda u - |u|^{p-1} u = 0.
\end{equation}
Here $u$ is defined on a complete Riemannian manifold $M$
(possibly with boundary),
and $\Delta$ is the Laplace-Beltrami operator on $M$.  If $\pa M \neq 0$,
we might impose
Dirichlet or Neumann boundary conditions.  Similarly, a standing wave
solution to the nonlinear Klein-Gordon equation
\begin{equation}
\label{eqn:nlkg}
v_{tt} - \Delta v + \sigma^2 v - |v|^{p-1} v = 0, \ \ v(t,x) = e^{i \mu t} u (x),
\end{equation}
leads to \eqref{eqn:nlbs} with $\lambda = \sigma^2 - \mu^2$. 

There is a large literature on \eqref{eqn:nlbs} when $M$ is a Euclidean space
$\RR^n$ or a bounded domain.  More recent papers have dealt with hyperbolic
space $\HH^n$ and ``weakly homogeneous spaces.''
See \cite{ManSan,ChMa-hyp,CMMT}.  One way to get solutions to \eqref{eqn:nlbs}
is to minimize the functional
\begin{equation}
\label{eqn:Flambda}
F_\lambda (u) = \| \nabla u \|_{L^2}^2 + \lambda \| u \|_{L^2}^2,
\end{equation}
subject to the constraint
\begin{equation}
\label{eqn:Jp}
J_p (u) = \int_M | u|^{p+1} \,d \Vol = \beta,
\end{equation}
with $\beta \in (0,\infty)$ fixed.
This works for the spaces $M$ mentioned above if $n \geq 2$ and
\begin{equation}
\label{eqn:pbounds}
1 < p < \frac{n+2}{n-2},
\end{equation}
provided
\begin{equation}
\label{eqn:spec}
\Spec (-\Delta) \subset [\delta, \infty), \ \ \lambda > - \delta.
\end{equation}
However, as pointed out in \cite{CMMT}, there are many examples of innocent
looking $M$, such as
\begin{equation}
\label{eqn:extdisc}
M = \RR^n \setminus B,
\end{equation}
where $B \subset \RR^n$ is a smoothly bounded compact domain, for which such
$F_\lambda$ minimizers (with Dirichlet boundary conditions) do not exist.

On the other hand, we can sometimes find $F_\lambda$-minimizers on manifolds
with some symmetry, if we add an extra constraint.  For example, suppose $M$
is as in \eqref{eqn:extdisc}, and
\begin{equation}
\label{eqn:disc}
B = \left\{  x \in \RR^n \ : \ |x| \leq 1 \right\}.
\end{equation}
Then, we can minimize $F_\lambda$ over
\begin{equation}
\label{eqn:H1rad}
H^1_r (M) = \{ u \in H^1 (M) \ : \ u \ \text{is radial} \},
\end{equation}
subject to the constraint \eqref{eqn:Jp}, as long as
\eqref{eqn:pbounds}--\eqref{eqn:spec} hold (by an argument we will generalize
in Section \ref{sec:vortspher}).  The resulting minimizer will then be a
radial solution to \eqref{eqn:nlbs}.

Here is a variant.  Take $n=2$ and let
$M = \RR^2$ or $M = \RR^2 \setminus B$, with $B$ as in \eqref{eqn:disc}.
Take $\ell \in \ZZ$ and consider
\begin{equation}
\label{eqn:H1ell}
H^1_{(\ell)} (M) = \{ u \in H^1 (M) \ : \ u( R_\theta x) =
e^{ i \ell \theta} u(x), \forall \ \theta \in [0, 2 \pi] \},
\end{equation}
where $R_\theta : \RR^2 \to \RR^2$ is a rotation:
\begin{equation}
\label{eqn:rot}
R_\theta = \left(  \begin{array}{cc}
\cos \theta & - \sin \theta \\
\sin \theta & \cos \theta
\end{array}
\right).
\end{equation}
If $\ell = 0$, we get \eqref{eqn:H1rad}, and other values of $\ell$ yield
other spaces.  We can minimize $F_\lambda$ over $H^1_{(\ell)} (M)$, subject
to the constraint \eqref{eqn:Jp}, as long as
\eqref{eqn:pbounds}--\eqref{eqn:spec} hold, and get a solution in
$H^1_{(\ell)} (M)$ to \eqref{eqn:nlbs}.  Such a solution is called a
{\it vortex solution}.  Works on this include \cite{FG}, \cite{M}, and
\cite{SZ}.

Here, we extend the scope of the search for solutions to \eqref{eqn:nlbs}
with symmetry, in several ways.  First, we let $V$ be a $k$-dimensional
complex inner-product space, and consider functions $u$ with values in $V$.
We say $u \in H^s (M,V)$ if the components of $u$ belong to the Sobolev space
$H^s (M)$.  We then set
\begin{equation}
\label{eqn:Hspi}
H^s_\pi (M) = \{ u \in H^s (M,V) \ : \ u(gx) = \pi (g) u(x), \ \forall x \in
M, \ g \in G \},
\end{equation}
where $G$ is a compact Lie group that acts on $M$ by isometries and $\pi$ is
a unitary representation of $G$ on $V$.

In case $M$ is
given by \eqref{eqn:extdisc}--\eqref{eqn:disc}, we could take $G = SO(n)$.
More generally, $G$ can be any compact subgroup of $SO(n)$ that acts
transitively on the unit sphere $\SS^{n-1} \subset \RR^n$.  Examples
include
\beq
\alignedat 2
G & = SO(n), &{} \\
G & = SU(m) \quad &(n = 2m), \\
G & = U(m) \quad &(n = 2m),  \\
G  & = Sp(m) \quad &(n = 4m) .
\label{eqn:Gclasses}
\endalignedat
\eeq
We will also consider more general complete, $n$-dimensional, Riemannian
manifolds on which such groups act in Section \ref{sec:vortspher}.  The
manifolds we treat have the form
\begin{equation}
\label{eqn:sphereM}
M = I \times \SS^{n-1}, \ \ I = [0,\infty), \ \ [1, \infty), \ \
(-\infty, \infty),
\end{equation}
where, in the first case, all points $\{ (0,\omega) \ : \ \omega \in
\SS^{n-1} \}$ are identified.  The metric tensor will have the form
\begin{equation}
\label{eqn:metric}
g = dr^2 + h (r) 
\end{equation}
with $r \in I$ and $h(r)$ an $r$-dependent family of metric tensors on
$\SS^{n-1}$, invariant under the action of $G$.  See Section
\ref{sec:vortspher} for details.  We say such manifolds have
{\it rotational symmetry}, and we say that elements of $H^1_\pi (M)$ are
{\it spherical vortices} (and solutions to \eqref{eqn:nlbs} with $u \in
H^1_\pi (M)$ are {\it spherical} vortex standing waves) in this setting.

In Section \ref{sec:vortaxial}, we consider further variants of
\eqref{eqn:H1ell}.  Here, we take $M = \RR^{n+k}$, with $G = SO(n)$
acting on the first $n$ coordinates, and acting transitively on $\SS^{n-1}$,
as in \eqref{eqn:Gclasses}.  Again, we define $H^1_\pi (M)$ as in
\eqref{eqn:Hspi}.  One example is
\beq
\aligned
&M= \RR^3, \ G = SO(2), \\
&H^1_{(\ell)}  (\RR^3) = \{ u \in H^1 (\RR^3) \ : \ u( R_\theta x) =
e^{ i \ell \theta} u(x), \ \forall \theta \in [0, 2 \pi] \},
\endaligned
\eeq
where, in place of \eqref{eqn:rot}, we have
\begin{equation}
\label{eqn:rot3d}
R_\theta = \left(  \begin{array}{ccc}
\cos \theta & - \sin \theta & 0 \\
\sin \theta & \cos \theta & 0 \\
0 & 0 & 1
\end{array}
\right).
\end{equation}
In this setting, we say that the elements of $H^1_\pi ( \RR^{n+k})$ are
{\it axial} vortices.

In the settings of both Section \ref{sec:vortspher} and Section
\ref{sec:vortaxial}, in order to have nontrivial solutions of
\eqref{eqn:nlbs} in $H^1_\pi (M)$, we need to have $H^1_\pi (M) \neq 0$.
We have the following criterion, valid when $M$ is as considered in either
of these settings.  Pick a point $p_0 \in \SS^{n-1}$, and let
\begin{equation}
\label{eqn:Krep1}
K = \{ g \in G \ : \ g \cdot p_0 = p_0 \}.  
\end{equation}
Then, $H^1_\pi (M) \neq 0$ if and only if
\begin{equation}
\label{eqn:V0}
V_0 = \{ \phi \in V \ : \ \pi (k) \phi = \phi, \ \forall k \in K \}
\end{equation}
has the property
\begin{equation}
\label{eqn:V0neq0}
V_0 \neq 0.
\end{equation}
When \eqref{eqn:V0neq0} holds, we have elements of $H^1_\pi (M)$ of the form
\begin{equation}
\label{eqn:rgp}
u(r, g \cdot p_0) = \psi (r) \pi (g) \phi, \quad
\psi : I \to \CC, \ \phi \in V_0,
\end{equation}
given $M$ as is \eqref{eqn:sphereM}--\eqref{eqn:metric}.  If $M = \RR^{n+k} =
\RR^n \times \RR^k$, we can take
\begin{equation}
\label{eqn:psipiphi}
u(r g \cdot p_0, y) = \psi(r,y) \pi (g) \phi, \quad \psi : [0,\infty) \times
\RR^k \to \CC, \ \phi \in V_0.
\end{equation}
We complement \eqref{eqn:Gclasses} with the list of pairs $(G,K)$:
\beq
\alignedat 2
G &= SO (n),\quad &K = SO (n-1), \\
G &= SU (m),\quad &K = SU (m-1), \\
G &= U (m),\quad &K = U (m-1). 
\endalignedat
\label{eqn:GKclasses}
\eeq

Here are some examples of representations of $SO(n)$ that satisfy
\eqref{eqn:V0neq0}.  Obviously the ``standard'' representation of $SO(n)$ on
$\RR^n$ (complexified to $V = \CC^n$) enjoys the property \eqref{eqn:V0neq0}.
Others arise as follows.  The action of $SO(n)$ on $\SS^{n-1}$ gives a
unitary action of $SO(n)$ on $L^2 (\SS^{n-1})$, which commutes with the
Laplacian on $\SS^{n-1}$.  Hence, we get a unitary representation of $SO(n)$
on each eigenspace $E_\ell$ of this Laplacian.  Each such eigenspace contains
a zonal function $\phi$, unique up to a multiple, and this satisfies
\eqref{eqn:V0neq0}.  This result has a converse.  If $\pi$ is an irreducible
unitary representation of $SO(n)$ on $V$ and \eqref{eqn:V0neq0} holds,
$\pi$ is equivalent to a representation on some $E_\ell$ described above, via
\begin{equation}
\label{eqn:Phi}
\Phi : V \to C^\infty ( \SS^{n-1}) , \quad
\Phi (\psi) ( g \cdot p_0) = (\pi (g)
\phi, \psi )_{L^2},
\end{equation}
with $\phi \in V_0$.  For $n = 2$, the eigenspaces $E_\ell$ break up into
$\Span (e^{ i \ell \theta})$ and $\Span (e^{- i \ell \theta})$, and we get
the spaces \eqref{eqn:H1ell}.  All the irreducible unitary representations
of $SO(3)$ arise from decomposing $L^2 (\SS^2)$ as described above.  For
$n \geq 4$ there are irreducible unitary representations of $SO(n)$ that do
not arise from such a decomposition of $L^2 (\SS^{n-1})$, and for such
representations \eqref{eqn:V0neq0} fails.

See Appendix \ref{app:H1pineq0} for further material on the validity of
\eqref{eqn:V0neq0}, applicable to more general pairs $(G,K)$, including the
second and third cases in \eqref{eqn:GKclasses}.

The rest of this paper is structured as follows.  Section \ref{sec:vortspher}
treats spherical vortices, with the goal of constructing spherical vortex
standing wave solutions to \eqref{eqn:nlbs}.  We treat $F_\lambda$-minimizers
in Section \ref{sec:vortspherFlambda}, for a class of
$n$-dimensional Riemannian manifolds with rotational symmetry.
In Section \ref{sec:vortspherenmin} we treat energy minimizers, i.e.,
minimizers of the energy
\begin{equation}
\label{eqn:energy}
E(u) = \frac12 \|
\nabla u \|_{L^2 (M) }^2 - \frac{1}{p+1} \int_M | u |^{p+1} d\Vol
\end{equation}
over $H^1_\pi (M)$, subject to the constraint
\begin{equation}
\label{eqn:charge}
Q(u) = \| u \|_{L^2}^2 = \beta
\end{equation}
with $\beta$ given in $(0,\infty)$.  In these sections, $\lambda$ and $p$
satisfy appropriate hypotheses, which are stricter for $p$ in Section
\ref{sec:vortspherenmin} than in Section \ref{sec:vortspherFlambda}.
In Section \ref{sec:vortspherWmax}, we treat maximizers of the Weinstein
functional
\begin{equation}
\label{eqn:W}
W(u)=\frac{\|u\|_{L^{p+1}}^{p+1}}{\| u\|_{L^2}^\alpha\|\nabla u\|_{L^2}^\beta}, 
\end{equation}
over nonzero $u \in H^1_\pi (\RR^n)$, with
\begin{equation}
\label{eqn:alphabeta}
\alpha = 2 - \frac{(n-2)(p-1)}{2},\quad \beta = \frac{ n (p-1)}{2},
\end{equation}
assuming $p$ satisfies \eqref{eqn:pbounds}.  Section \ref{sec:vortspherODE}
derives ordinary differential equations associated to spherical vortices,
and uses these to obtain further results about the behavior of such solutions.

Section \ref{sec:vortaxial} treats axial vortices on $\RR^{n+k}$.  We 
construct $F_\lambda$-minimizers, under appropriate hypotheses on $p$,
in Section \ref{sec:vortaxialFlambda}.  Variants of this analysis, coupled with
arguments from Sections \ref{sec:vortspherenmin} and \ref{sec:vortspherWmax},
can be brought to bear to produce axial vortices that either minimize
energy or maximize the Weinstein functional, within appropriate function
classes, though we do not pursue the details here.  
In rough parallel to the ODE study done in Section \ref{sec:vortspherODE},
Section \ref{sec:vortaxialPDE} derives reduced variable PDE for axial standing
waves.  Harnack estimates on solutions to these reduced PDE provide some
valuable information on axial vortices.

In Section \ref{sec:masscrit}, we discuss solutions to the mass critical NLS
with vortex initial data.  Thus we take $p=1+4/n$ in \eqref{eqn:nls}.
We show in \S{\ref{s4.1}} that if such initial
data $u_0 \in H^1_\pi (\RR^{n})$ has mass $\| u_0 \|_{L^2}$ less than that
of the corresponding Weinstein functional maximizer (call it $Q_\pi$),
then the solution to \eqref{eqn:nls}
exists for all $t$, extending previous results established in \cite{W} for
$u \in H^1 (\RR^n)$ with mass less than the radial Weinstein functional
maximizer and in \cite{FG} for $u_0 \in H^1_{(\ell)} (\RR^2)$.
We also treat some more general
Riemannian manifolds with rotational symmetry.
In \S{\ref{s4.2}}, we obtain some ``monotonicity '' results, on how the
Weinstein functional supremum depends on the representation $\pi$.
In Section \ref{sec:vortscat}, we address the scattering of solutions with
mass below the vortex mass.  We adapt results presented in  \cite{Caz_book}
to show that if
\beq
v_0\in H^1_\pi(\Rr^n)\cap H^{0,1}(\Rr^n),
\label{1.0.31}
\eeq
where $H^{0,1}(\Rr^n)=\{v_0\in L^2(\Rr^n):|x|v_0\in L^2(\Rr^n)\}$, and if
\beq
\|v_0\|_{L^2}<\|Q_\pi\|_{L^2},
\label{1.0.32}
\eeq
then the global solution to \eqref{eqn:nls}, with initial data $v_0$,
guaranteed by Section \ref{s4.1}, exhibits scattering.

We end with some appendices. The first goes further into which representations
$\pi$ of $G$ on $V$ satisfy \eqref{eqn:V0neq0}.
The second discusses important irreducible
representations of $G = SO(n)$, $SU(n/2)$, and $U(n/2)$, when $n = 4$,
as they relate to the description of $V_0$.
The third introduces a more general geometrical setting, unifying the treatment of
axial vortices in Section \ref{sec:vortaxial} with work done on ``weakly
homogeneous spaces'' in \cite{CMMT}.

$\text{}$ \newline
{\sc Remark.} When $\pa M\neq\emptyset$ and we want to use the Dirichlet
boundary condition, of course we replace the $s=1$ case of \eqref{eqn:Hspi}
by
\beq
H^1_\pi(M)=\{u\in H^1_0(M,V):u(gx)=\pi(g)u(x),\ \forall\, x\in M, g\in G\}.
\eeq

\section{Spherical Vortices}
\label{sec:vortspher}

In this section, we work with the following class of complete Riemannian
manifolds $M$ (possibly with boundary).  Let $n = \dim M$.  Assume $n \geq 2$.
We assume $M$ has a compact group $G$ of isometries whose orbits foliate $M$
into hypersurfaces diffeomorphic to $\SS^{n-1}$ (plus perhaps one point $o$,
fixed by the action of $G$).  Also, assume $M$ is connected, and noncompact.
Such a Riemannian manifold will be said to have {\it rotational symmetry}.
Topologically, $M$ takes one of the following forms:
\begin{align}
\label{eqn:M1}
M =\ & [0,\infty) \times \SS^{n-1} / \sim, \\
\label{eqn:M2}
& [1,\infty) \times \SS^{n-1}, \\
\label{eqn:M3}
& (-\infty, \infty) \times \SS^{n-1}.
\end{align}
For short, we say
\begin{equation}
\label{eqn:Mshort}
M = I \times \SS^{n-1}, \ \ I = [0,\infty), \ [1, \infty), \ \text{or} \
(-\infty, \infty).
\end{equation}
In the first case, all points $\{ (0,\omega) \ : \ \omega \in \SS^{n-1} \}$
are identified with $o$.  In the second case, $M$ has a boundary,
diffeomorphic to $\SS^{n-1}$.  In the third case, $M$ has neither a boundary
nor a point fixed by $G$.

Examples of spaces with rotational symmetry of the first type include $\RR^n$
and $\HH^n$, and all other noncompact rank-one symmetric spaces, as well as
vastly more cases.  Examples of the second type can be obtained by excising
a ball centered at $o$ from examples of the first type.  Examples of the
third type can be obtained by gluing together two examples of the second type.

The metric tensor on $M$ takes the form
\begin{equation}
\label{eqn:metric1}
g = dr^2 + h(r),
\end{equation}
where $r$ parametrizes the interval $I$ and $h(r)$ is an $r$-dependent family
of metric tensors on $\SS^{n-1}$, invariant under the action of $G$.  Since
$G$ acts transitively on $\SS^{n-1}$, the area element on $\SS^{n-1}$ induced
by $h(r)$ is an $r$-dependent multiple of the standard area element $d \SS$
on $\SS^{n-1} \subset \RR^n$, and the volume element on $M$ has the form
\begin{equation}
\label{eqn:volel1}
d \Vol = A(r)\, dr\, d \SS (\omega), \ \omega \in \SS^{n-1}.
\end{equation}
As is well-known, 
\begin{align}
\label{eqn:eucjac}
& M = \RR^n \Rightarrow A(r) = A_n r^{n-1}, \\
\label{eqn:hypjac}
& M= \HH^n \Rightarrow A(r) = A_n (\sinh r)^{n-1}.
\end{align}
In Section \ref{sec:vortspherFlambda}, we show that $F_\lambda$-minimizers
exist in $H^1_\pi (M)$ on manifolds with rotational symmetry, under appropriate
hypotheses.  Recall from Section \ref{sec:vortintro} that
\begin{equation}
\label{eqn:Flambda1}
F_\lambda (u) = \| \nabla u \|_{L^2}^2 + \lambda \| u \|_{L^2}^2.
\end{equation}
We fix $\beta \in (0,\infty)$ and desire to minimize $F_\lambda (u)$ over
$u \in H^1_\pi (M)$, subject to the constraint
\begin{equation}
\label{eqn:Jp1}
J_p = \int_M |u|^{p+1}\, d \Vol = \beta.
\end{equation}
We assume
\begin{equation}
\label{eqn:spec1}
\Spec (-\Delta) \subset [\delta, \infty), \ \lambda > - \delta,
\end{equation}
and
\beq
\aligned
1 < p < \frac{n+2}{n-2}  \quad &\text{if} \ I = [0,\infty), \\
1 < p < \infty  \quad &\text{if} \ I = [1,\infty) \ \text{or} \
(-\infty,\infty).
\endaligned
\label{eqn:pconstraints}
\eeq
We will also impose a certain condition on the function $A(r)$ arising in
\eqref{eqn:volel1}.  See Lemma \ref{lem:intvolel}.  This result guarantees,
among other things, that
\begin{equation}
\label{eqn:sobemb}
H^1_\pi (M) \subset
L^{p+1} (M), \quad \forall \ p \in \left(  1, \frac{n+2}{n-2} \right).
\end{equation}
If $u$ and $\psi$ belong to $H^1_\pi (M)$, standard calculations yield
\beq
\aligned
\label{eqn:dFlambda}
\frac{d}{d \tau} F_\lambda (u + \tau \psi) \Bigr|_{\tau = 0}  & =
2 \Re ( - \Delta u + \lambda u, \psi), \\
\frac{d}{d \tau} J_p (u + \tau \psi) \Bigr|_{\tau = 0} & = (p+1)
\Re \int |u|^{p-1} (u, \psi)_{V}\, d \Vol.
\endaligned
\eeq
If $u \in H^1_\pi (M)$ is an $F_\lambda$- minimizer, subject to the constraint
\eqref{eqn:Jp1}, then
\beq
\aligned
\label{eqn:EL}
& \psi \in H^1_\pi (M) \ \text{and} \ \Re \int |u|^{p-1} (u, \psi)_V\,
d \Vol = 0 \\
& \Longrightarrow \Re (-\Delta u + \lambda u , \psi ) = 0. 
\endaligned
\eeq
Note that from \eqref{eqn:sobemb}, we have
\begin{equation}
\label{eqn:udual}
u \in H^1_\pi (M) \Longrightarrow |u|^{p-1} u \in H^{-1}_\pi (M).
\end{equation}
The space $H^{-1}_\pi (M)$ is canonically dual to $H^1_\pi (M)$.
(This requires slightly more elaborate wording if $\pa M \neq 0$,
but the appropriate duality theory is standard.)  It follows from
\eqref{eqn:EL} that the two elements of $H^{-1} (M)$ given by $-\Delta u +
\lambda u$ and $|u|^{p-1} u$ are linearly dependent over $\RR$.  Hence,
there exists a $K \in \RR$ such that
\begin{equation}
\label{eqn:lagmulnlbs}
-\Delta u + \lambda u = K |u|^{p-1} u,
\end{equation}
with equality holding in $H^{-1}_\pi (M)$.  Pairing both sides of
\eqref{eqn:lagmulnlbs} with $u$ gives
\begin{equation}
\label{eqn:Kdef}
K = \beta^{-1} \II (\beta, \pi) > 0
\end{equation}
where
\begin{equation}
\label{eqn:calI}
\II (\beta , \pi) = \inf \{ F_\lambda (u) \ : \ u \in H^1_\pi (M), \ J_p (u)
= \beta \}.
\end{equation}
If $u$ solves \eqref{eqn:calI}, then $u_a = a u$ solves
\begin{equation}
\label{eqn:ua}
-\Delta u_a + \lambda u_a = |a|^{-(p-1)} K |u_a|^{p-1} u_a,
\end{equation}
so taking $a = K^{{1}/({p-1})}$ yields a solution in $H^1_\pi (M)$ to
\eqref{eqn:nlbs}.

In Section \ref{sec:vortspherenmin}, we discuss the existence of energy
minimizers in $H^1_\pi (M)$.  Recall from Section \ref{sec:vortintro} that
\begin{equation}
\label{eqn:energy1}
E(u) = \frac12 \| \nabla u \|_{L^2}^2 - \frac{1}{p+1} \int_M | u |^{p+1}\, d
\Vol.
\end{equation}
We fix $\beta \in (1, \infty)$ and desire to minimize $E(u)$ over $H^1_\pi
(M)$, subject to the constraint
\begin{equation}
\label{eqn:massbeta}
Q(u) = \| u \|_{L^2}^2 = \beta.
\end{equation}
In place of \eqref{eqn:pconstraints}, we assume
\beq
\aligned
\label{eqn:pconstraints_enmin}
1 < p < 1 + \frac{4}{n}, \quad &\text{if} \ I = [0,\infty), \\
1 < p < \infty, \quad &\text{if} \ I = [1,\infty) \ \text{or} \ (-\infty, \infty).
\endaligned
\eeq
We also impose conditions on $A(r)$ as in Lemma \ref{lem:intvolel}, and
further conditions satisfied in Section \ref{sec:vortspherenmin}.  The
argument here is less elementary than that of Section
\ref{sec:vortspherFlambda}; it involves the concentration-compactness method
of \cite{L1}.  See Section \ref{sec:vortspherenmin} for further details.

In Section \ref{sec:vortspherWmax} we obtain Weinstein functional maximizers.
Here $M = \RR^n$ and $W(u)$ is as in \eqref{eqn:W}.  We obtain a maximizer
$u \in H^1_\pi (\RR^n)$ under hypotheses as in the setting of finding
$F_\lambda$-minimizers.

Section \ref{sec:vortspherODE} derives ordinary differential equations
associated to vortex standing waves obtained in Sections
\ref{sec:vortspherFlambda}--\ref{sec:vortspherWmax}.  Results here also lead
to ``positivity'' results for the standing waves, restricted to a ray in $M$,
upon multiplying by a constant, at least under certain conditions on the
space $V_0$.

\subsection{$F_\lambda$-minimizers}
\label{sec:vortspherFlambda}

We take $M$ to be a complete, $n$-dimensional, Riemannian manifold (possibly
with boundary), with rotational symmetry, as defined above.  We make the
hypotheses \eqref{eqn:spec1}--\eqref{eqn:pconstraints} and define $F_\lambda
(u)$ and $J_p (u)$ as in \eqref{eqn:Flambda1}-\eqref{eqn:Jp1}.  Hypothesis
\eqref{eqn:spec1} implies
\begin{equation}
\label{eqn:FlamH1}
F_\lambda (u) \approx  \| u \|_{H^1 (M)}^2.
\end{equation}
The Sobolev embedding theorem together with the Rellich theorem implies
\begin{equation}
\label{eqn:compemb}
H^1 (M) \hookrightarrow  L^q (\Omega), \ \text{compactly}, \quad
\forall \ q \in \left[ 2, \frac{2n}{n-2} \right),
\end{equation}
given $\Omega \subset M$ relatively compact.  If $M$ is as in
\eqref{eqn:Mshort}-\eqref{eqn:metric1} and $I$ is $[1,\infty)$ or $(-\infty,
\infty)$, then, just as in the familiar case of radial functions,
\begin{equation}
\label{eqn:compemb1}
H^1_\pi (M) \hookrightarrow L^q (\Omega), 
\text{compactly}, \quad \forall \ q \in \left[ 2, \infty \right),
\end{equation}
for such $\Omega$.  Note that if $p$ satisfies \eqref{eqn:pconstraints}, then
$q = p+1$ satisfies \eqref{eqn:compemb} or \eqref{eqn:compemb1}.  We fix
$\beta$ and the representations $\pi$ of $G$ on $V$, and set
\begin{equation}
\label{eqn:Ical}
\II ( \beta, \pi) = \inf \{ F_\lambda (u) \ : \ u \in H^1_\pi (M), \ J_p (u)
= \beta \}.
\end{equation}
In order to know that $\II (\beta, \pi) > 0$, we need to supplement
\eqref{eqn:compemb} with a global estimate in $L^q (M)$.  We need this, not
for all $u \in H^1 (M,V)$, just for all $u \in H^1_\pi (M)$.  The following
observation will prove useful.

Given $u \in H^1 (M,V)$, set $w = |u|$, using the $V$-norm.  There is the
classical (elementary) inequality:
\begin{equation}
\label{eqn:kinetic}
w = |u| \Longrightarrow  | \nabla w | \leq | \nabla u|,
\end{equation}
valid for any inner product space $V$.  Now,
\begin{equation}
\label{eqn:kineticpi}
u \in H^1_\pi (M) \Longrightarrow w = |u| \in H^1_r (M), \ \| w \|_{H^1} \leq
\| u \|_{H^1},
\end{equation}
where $H^1_r (M)$ consists of radial functions on $M$ (the case $H^1_\pi$
where $\pi$ is the trivial representation).  In light of this, the following
is the key to success.  Recalling \eqref{eqn:M1}--\eqref{eqn:volel1}, let us
set
\begin{equation}
\label{eqn:MR}
M_R = \{ x \in M \ : \ |r | \geq R \}.
\end{equation}
  
\begin{lemma}
\label{lem:intvolel}
Assume that $A(r)$ in \eqref{eqn:volel1} satisfies either
\begin{equation}
\label{eqn:intvolel}
\int_{|r| \geq 1} \frac{dr}{A(r)} < \infty,
\end{equation}
or
\begin{equation}
\label{eqn:Aasymp}
\lim_{|r| \to \infty} A(r) = \infty, \ \text{and} \ \sup_{|r| \geq 1} \left|
\frac{A' (r)}{A(r)} \right| < \infty.
\end{equation}
Then,
\beq
\aligned
\label{eqn:wlim}
w \in H^1_r (M) \Longrightarrow\ &w |_{M_1} \in C( M_1), \ \text{and}  \\
&\lim_{|r| \to \infty} |w (r) | = 0.
\endaligned
\eeq
\end{lemma}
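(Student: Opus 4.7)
The plan is to reduce everything to a one-variable analysis of $w$ in the radial coordinate $r$. Since $w \in H^1_r(M)$ is radial, the identity
\begin{equation*}
\|w\|_{H^1(M)}^2 = \Vol(\SS^{n-1}) \int_I \bigl(w(r)^2 + w'(r)^2\bigr) A(r)\, dr
\end{equation*}
holds, so $\int_I A(w^2 + (w')^2)\, dr < \infty$. Continuity of $w$ on $M_1$ is then immediate: on any compact subinterval of $I \cap \{|r|\geq 1\}$, $A(r)$ is a positive continuous function, hence bounded above and below by positive constants, so $w$ lies in the unweighted $H^1$ of that subinterval, and the one-dimensional Sobolev embedding gives $w \in C$ (indeed, $C^{1/2}$) on $M_1$.

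For the decay under hypothesis \eqref{eqn:intvolel}, I would apply Cauchy--Schwarz: for $1 \leq R_1 < R_2$,
\begin{equation*}
|w(R_2) - w(R_1)|^2 = \Bigl|\int_{R_1}^{R_2} w'(s)\, ds\Bigr|^2 \leq \Bigl(\int_{R_1}^{R_2} \frac{ds}{A(s)}\Bigr) \Bigl(\int_{R_1}^{R_2} A(s) w'(s)^2\, ds\Bigr).
\end{equation*}
As $R_1 \to \infty$, both factors tend to zero (the first by \eqref{eqn:intvolel}, the second because $w \in H^1_r(M)$), so $w(r)$ is Cauchy at infinity and admits a limit $w_\infty$. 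To see $w_\infty = 0$, note that Cauchy--Schwarz applied to $1 = \sqrt{1/A}\cdot\sqrt{A}$ forces $\int_1^\infty A\, dr = \infty$ whenever $\int_1^\infty ds/A < \infty$; a nonzero limit would then give $w^2 \geq c > 0$ on some $[R,\infty)$ and hence $\int_R^\infty A w^2\, dr = \infty$, contradicting $w \in H^1_r(M)$.

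For the decay under hypothesis \eqref{eqn:Aasymp}, the key is to examine $F(r) := A(r) w(r)^2$ rather than $w$ directly. Differentiating,
\begin{equation*}
|F'(r)| \leq \Bigl|\frac{A'(r)}{A(r)}\Bigr| A(r) w(r)^2 + 2 A(r) |w(r)||w'(r)|,
\end{equation*}
so integration over $[1,\infty)$ bounds the first term by $\sup_{r\geq 1}|A'/A|\cdot\int_1^\infty A w^2\,dr$, and the second by Cauchy--Schwarz by $2\bigl(\int_1^\infty A w^2\,dr\bigr)^{1/2}\bigl(\int_1^\infty A (w')^2\,dr\bigr)^{1/2}$; both are finite, so $F'\in L^1([1,\infty))$ and $F$ has a finite limit at infinity. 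A nonzero limit would again force $\int_1^\infty A w^2\,dr = \infty$, so $F(r)\to 0$; since $A(r)\to\infty$, $w(r)^2 = F(r)/A(r)\to 0$. The symmetric argument handles $r\to -\infty$ when $I = (-\infty,\infty)$.

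The main (minor) obstacle is spotting that $\int 1/A < \infty$ already forces $\int A = \infty$, which is what rules out a nonzero limit in the first case; and, in the second case, guessing the quantity $F = A w^2$ whose derivative is tailor-made for the hypothesis on $|A'/A|$.
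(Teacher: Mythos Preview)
Your proof is correct. For hypothesis \eqref{eqn:Aasymp}, your approach and the paper's coincide in examining $F(r) = A(r)w(r)^2$; the paper bounds $F'$ pointwise from above (via the substitution $B = A^{1/2}$ and Young's inequality) and integrates from a point where $w$ vanishes to get $A(r)w(r)^2 \leq C\|w\|_{H^1}^2$ directly, while you bound $|F'|$ in $L^1$ and then argue the limit of $F$ must vanish. For hypothesis \eqref{eqn:intvolel}, the paper introduces a cutoff $\chi_R$ and applies Cauchy--Schwarz to get $\|w\|_{L^\infty(M_R)} \leq \eta(R)\|w\|_{H^1}$ with $\eta(R) = \bigl(\int_{R-1}^\infty dr/A\bigr)^{1/2}$, whereas you first show $w$ is Cauchy at infinity and then rule out a nonzero limit via the observation that $\int 1/A < \infty$ forces $\int A = \infty$. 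Both routes are valid; the paper's has the minor advantage of producing the explicit quantitative decay $\|w\|_{L^\infty(M_R)} \leq \eta(R)\|w\|_{H^1}$ in one stroke, which is used immediately afterward (see \eqref{eqn:winfbd}--\eqref{eqn:uqbd}) to establish compactness of $H^1_\pi(M) \hookrightarrow L^{p+1}(M)$. Your argument recovers the same bound after the fact (let $R_2 \to \infty$ in your Cauchy estimate once $w_\infty = 0$ is known), so nothing is lost.
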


\begin{remark}
By \eqref{eqn:eucjac} and \eqref{eqn:hypjac}, \eqref{eqn:intvolel} holds for
$\RR^n$ whenever $n \geq 3$ and for $\HH^n$ whenever $n \geq 2$, and
\eqref{eqn:Aasymp} holds for both $\RR^n$ and $\HH^n$ whenever $n \geq 2$.
The implication \eqref{eqn:wlim} was proven for $M = \RR^n$ in \cite{Str}
(see also \cite{BeLi}) and it was proven for $M = \HH^n$ in \cite{CMMT}.
The proof here for Lemma \ref{lem:intvolel} is a variation of these arguments.
\end{remark}

\sk
To prove Lemma \ref{lem:intvolel}, it suffices to establish
\eqref{eqn:wlim} assuming $w$ is smooth, non-negative,
and that $w = 0$ for $|r| \leq 1$.  For simplicity, we assume $r$ runs over
$[0,\infty)$ as in \eqref{eqn:M1}.

Set $B(r) = (A(r))^{1/2}$.  Then,
\begin{align}
\frac{d}{dr} \bigl(  A(r) w^2 \bigr)
& = 2 \frac{d}{dr} \Bigl( B(r) w \Bigr)
B(r) w \notag \\
& \leq \left[ \frac{d}{dr}  \left( B(r) w \right) \right]^2 + \left( B(r) w
\right)^2 \label{eqn:Avirial} \\
& = \left[ B(r) \frac{dw}{dr} + B' (r) w \right]^2 + \left( B(r) w \right)^2
\notag \\
 & \leq 2 A(r) \left( \frac{dw}{dr} \right)^2 + 2 B' (r)^2 w^2 + A(r) w^2,
 \notag
 \end{align}
the last inequality by $(a+b)^2 \leq 2 a^2 + 2 b^2$.  Now,
\beq
\aligned
\label{eqn:Bprime}
B(r) = A(r)^{1/2}
& \Rightarrow B' (r) = \frac12 A(r)^{-1/2} A'(r) \\
& \Rightarrow 2 B'(r)^2 = \frac12  A(r)^{-1} A' (r)^2,
\endaligned
\eeq
  so
  \begin{equation}
  \label{eqn:Avirial1}
  \frac{d}{dr} \Bigl( A(r) w^2 \Bigr) \leq 2 A(r) \left( \frac{dw}{dr}
  \right)^2 + A(r) \left[ 1 + \left( \frac{ A' (r) }{ A(r)} \right)^2
  \right] w^2.
  \end{equation}
  Hence, given $w(1) = 0$, we have for $r > 1$
  \begin{equation}
  \label{eqn:Avirial2}
  A(r) w(r)^2 \leq 2 \int_{M_1} | \nabla w |^2 \Vol + \int_{M_1}
  \left[ 1 + \left( \frac{ A' (r) }{ A(r)} \right)^2 \right] |w|^2 d \Vol.
  \end{equation}
  This shows that \eqref{eqn:Aasymp} implies \eqref{eqn:wlim}.

  Whether or not \eqref{eqn:intvolel} and \eqref{eqn:Aasymp} hold, $A(r)$ is
  smooth and positive for $r \neq 0$, so
  $w \in H^1_r (M) \Rightarrow w |_{M_1}
  \in C(M_1)$.  We now seek to prove the estimate \eqref{eqn:wlim} under the
  hypothesis \eqref{eqn:intvolel}.  Let us assume $R > 2$ and set
  \begin{equation}
  \label{eqn:wR}
  w_R (r) = \chi_R (r) w(r),
  \end{equation}
  where $\chi_R (r) = 0$ for $|r| \leq R-1$, $1$ for $|r| \geq R$, and $\chi_R$
  has Lipschitz constant $1$.  Then,
\beq
\aligned
\| w \|_{L^\infty (M_R)} & \leq \int_{R-1}^\infty |w_R' (r) | dr,  \\
& = \int_{R-1}^\infty | w_R' (r)| A(r)^{1/2} A(r)^{-1/2} dr
\label{eqn:MRbound} \\
& \leq \eta (R) \| w \|_{H^1 (M)},
\endaligned
\eeq
by Cauchy's inequality, where
\begin{equation}
\label{eqn:etaAbd1}
\eta (R) = \left( \int_{R-1}^\infty \frac{dr}{A(r)} \right)^{1/2}.
\end{equation}
Hypothesis \eqref{eqn:intvolel} implies $\eta (R) \to 0$ as $R \to \infty$,
again yielding \eqref{eqn:wlim}.
This proves Lemma \ref{lem:intvolel}.

$\text{}$

Let us write the estimate \eqref{eqn:Avirial2} as
\begin{equation}
\label{eqn:winfbd}
\| w \|_{L^\infty (M_R)} \leq \eta (R) \| w \|_{H^1 (M)},
\end{equation}
for $w \in H^1_r (M)$, under hypothesis \eqref{eqn:Aasymp}, where now
\begin{equation}
\label{eqn:etaAbd}
\eta(R) = \sup_{|r| \geq R} C A (r)^{-1/2}.
\end{equation}

Putting together what we have observed, we see that if \eqref{eqn:intvolel}
or \eqref{eqn:Aasymp} holds, then
\begin{align}
\label{eqn:usobbd}
u \in H^1_\pi (M) & \Rightarrow u |_{M_1} \in L^2 (M_1) \cap L^\infty (M_1) \\
& \Rightarrow u |_{M_1} \in L^q (M_1), \ \forall q \in [2, \infty], \notag
\end{align}
the latter implication by interpolation.  Also, if $2 < q  < \infty$, 
\begin{align}
\label{eqn:uqbd}
\int_{M_R} |u|^q\, d \Vol & \leq \| u \|_{L^\infty (M_R)}^{q-2} \int_{M_R}
|u|^2\, d \Vol \\
& \leq \eta (R)^{q-2} \| u \|_{H^1}^q \notag ,
\end{align}
where $\eta (R)$ is given by \eqref{eqn:etaAbd1} if \eqref{eqn:intvolel}
holds and \eqref{eqn:etaAbd} if \eqref{eqn:Aasymp} holds.  Also, recalling
\eqref{eqn:compemb}, we have
\begin{align}
\label{eqn:usobbd1}
u \in H^1_\pi (M) & \Rightarrow u  \in L^q (M), \quad \forall q \in \left[ 2,
\frac{2n}{n-2} \right) \\
& \Rightarrow u \in L^{p+1} (M), \quad \forall p \in \left( 1, \frac{n+2}{n-2}
\right). \notag
\end{align}
Consequently, $\II (\beta, \pi)$, defined in \eqref{eqn:calI}, is positive.

Let us now take a sequence $(u_\nu)$, 
\begin{equation}
\label{eqn:minseq1}
u_\nu \in H^1_\pi (M) , \quad \| u_\nu \|_{L^{p+1}}^{p+1}
= \beta, \quad F_\lambda
\leq \II (\beta, \pi) + \frac{1}{\nu}.
\end{equation}
Passing to a subsequence, which we continue to denote $(u_\nu)$, we have
\begin{equation}
\label{eqn:wkconv1}
u_\nu \to u \in H^1_\pi (M), \ \text{converging weakly}.
\end{equation}
By \eqref{eqn:Avirial1}, 
\begin{equation}
\label{eqn:stconv}
u_\nu \to u \ \text{in} \ L^{p+1} ( M \setminus M_R)\text{-norm}, \quad
\forall\, R < \infty,
\end{equation}
as long as $p$ satisfies \eqref{eqn:pconstraints}.  If \eqref{eqn:intvolel}
or \eqref{eqn:Aasymp} holds, we have from this and \eqref{eqn:uqbd} that
\begin{equation}
\label{eqn:stpconv}
u_\nu \to u \ \text{in} \ L^{p+1} (M)\text{-norm},
\end{equation}
hence
\begin{equation}
\label{eqn:Jpeqbeta}
J_p (u)  = \beta.
\end{equation}
It follows that 
\begin{equation}
\label{eqn:FlamgeqI}
F_\lambda (u ) \geq \II ( \beta, \pi).
\end{equation}
On the other hand, \eqref{eqn:wkconv1} and \eqref{eqn:FlamH1} imply
\begin{equation}
\label{eqn:FlamleqI}
F_\lambda (u) \leq \II (\beta, \pi).
\end{equation}
 Hence, $u$ is the desired minimizer and we have the following result.
 
 \begin{proposition}
 \label{prop:Flammin}
 To the hypotheses made at the beginning of this section, add that either
 \eqref{eqn:intvolel} or \eqref{eqn:Aasymp} hold. Then the sequence $(u_\nu)$
 in \eqref{eqn:minseq1} has a subsequence that converges in $H^1$-norm to a
 limit $u \in H^1_\pi (M)$ that achieves the minimum in \eqref{eqn:calI}.
 \end{proposition}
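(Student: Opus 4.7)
The plan is a standard direct method argument in which the only geometric input is the rotational-symmetry decay estimate supplied by Lemma \ref{lem:intvolel}. By hypothesis \eqref{eqn:spec1} we have the equivalence \eqref{eqn:FlamH1}, so the minimizing sequence in \eqref{eqn:minseq1} is uniformly bounded in $H^1_\pi(M)$. Passing to a subsequence I extract a weak limit $u_\nu \rightharpoonup u$ in $H^1_\pi(M)$; the subspace $H^1_\pi(M)$ is weakly closed in $H^1(M,V)$ because the equivariance relation $u(gx)=\pi(g)u(x)$ survives weak $L^2$ testing against $\psi(g^{-1}\,\cdot\,)$ for each $g\in G$.

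Next I would establish strong convergence $u_\nu\to u$ in $L^{p+1}(M)$ by splitting $M=(M\setminus M_R)\cup M_R$. On the relatively compact piece the Rellich embedding \eqref{eqn:compemb} together with the admissible range of $p$ in \eqref{eqn:pconstraints} gives $u_\nu\to u$ in $L^{p+1}(M\setminus M_R)$ for each fixed $R$. For the tail I invoke the pointwise inequality \eqref{eqn:kinetic}, which ensures $w_\nu:=|u_\nu|\in H^1_r(M)$ with $\|w_\nu\|_{H^1}\leq\|u_\nu\|_{H^1}$, and then apply Lemma \ref{lem:intvolel} in the form \eqref{eqn:MRbound} (resp.\ \eqref{eqn:winfbd}) to obtain
\begin{equation}
\||u_\nu|\|_{L^\infty(M_R)}\leq \eta(R)\|u_\nu\|_{H^1},\qquad \eta(R)\to 0.
\end{equation}
Interpolating as in \eqref{eqn:uqbd} then yields $\int_{M_R}|u_\nu|^{p+1}\,d\Vol\to 0$ uniformly in $\nu$ as $R\to\infty$; the identical bound applied to $u$ controls its tail. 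Combining, $u_\nu\to u$ in $L^{p+1}(M)$, so $J_p(u)=\beta$ and $u$ is admissible, giving $F_\lambda(u)\geq\II(\beta,\pi)$.

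The reverse inequality $F_\lambda(u)\leq\II(\beta,\pi)$ is just weak lower semicontinuity of the Hilbert norm associated with $F_\lambda$ (again using \eqref{eqn:FlamH1}). Hence $F_\lambda(u)=\II(\beta,\pi)$ and $u$ realizes the minimum. Finally, since $H^1_\pi(M)$ equipped with the inner product polarizing $F_\lambda$ is a Hilbert space, weak convergence $u_\nu\rightharpoonup u$ together with norm convergence $F_\lambda(u_\nu)\to F_\lambda(u)$ automatically upgrades to strong $H^1$ convergence of the chosen subsequence, completing the proof.

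The main obstacle, as in all noncompact concentration problems, is the possibility that mass escapes to infinity. Here the rotational symmetry defeats this cleanly: the diamagnetic-type inequality \eqref{eqn:kinetic} transfers the tail problem from the vector-valued equivariant function $u$ to the scalar radial function $|u|$, and Lemma \ref{lem:intvolel}---whose proof is the only place the volume-growth hypotheses \eqref{eqn:intvolel} or \eqref{eqn:Aasymp} are used---forces uniform smallness of the $L^\infty$ tails. This is the heart of the argument; once tail control is in hand, all remaining steps are standard.
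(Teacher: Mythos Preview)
Your argument is correct and follows essentially the same route as the paper: weak compactness from \eqref{eqn:FlamH1}, strong $L^{p+1}$ convergence via the compact/tail splitting with Rellich on $M\setminus M_R$ and the radial decay estimate \eqref{eqn:uqbd} (fed by \eqref{eqn:kinetic} and Lemma~\ref{lem:intvolel}) on $M_R$, then weak lower semicontinuity. Your explicit mention of the weak closedness of $H^1_\pi(M)$ and the final upgrade to strong $H^1$ convergence (via $F_\lambda(u_\nu)\to F_\lambda(u)$ in the Hilbert norm) are welcome details the paper leaves implicit; just note that when $I=[1,\infty)$ or $(-\infty,\infty)$ and $p\ge (n+2)/(n-2)$ you should cite \eqref{eqn:compemb1} rather than \eqref{eqn:compemb} for the local compactness step.
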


\subsection{Energy Minimizers}
\label{sec:vortspherenmin}

As in Section \ref{sec:vortspherFlambda}, we take $M$ to be a complete,
$n$-dimensional, Riemannian manifold, possibly with boundary, with rotational
symmetry.  We desire to minimize the energy
\begin{equation}
\label{eqn:energy2}
E(u) = \frac12 \| \nabla u \|_{L^2}^2 - \frac{1}{p+1} \int_M | u |^{p+1}\, d
\Vol
\end{equation}
over $H^1_\pi (M)$, subject to the constraint
\begin{equation}
\label{eqn:mass2}
Q(u) = \| u \|_{L^2}^2 = \beta,
\end{equation}
with $\beta \in (0,\infty)$.  That is, we seek $u$ achieving
\begin{equation}
\label{eqn:calE}
\EE (\beta, \pi) = \inf \{ E(u) \ : \ u \in H^1_\pi (M), \ Q (u) = \beta \}.
\end{equation}
As for the constraint on $p$, instead of \eqref{eqn:pconstraints}, we assume
the following depending upon the nature of $M$, given as in
\eqref{eqn:M1}--\eqref{eqn:M3}:
\beq
\aligned
\label{eqn:pconstraints_enmin1}
1 < p < 1 + \frac{4}{n}, \quad &\text{if} \ I = [0,\infty), \\
1 < p < \infty, \quad &\text{if} \
I = [1,\infty) \ \text{or} \ (-\infty, \infty).
\endaligned
\eeq
We also assume
\begin{equation}
\label{eqn:Aconstraints}
A(r) \ \text{in \eqref{eqn:volel1} satisfies either \eqref{eqn:intvolel} or
\eqref{eqn:Aasymp}}.
\end{equation}

Since the constraints on $p$ here are at least as strict as in Section
\ref{sec:vortspherFlambda}, Lemma \ref{lem:intvolel} applies, and in concert
with \eqref{eqn:kinetic} and the arguments involving
\eqref{eqn:usobbd}--\eqref{eqn:usobbd1}, we have
\begin{equation}
\label{eqn:usobbd2}
H^1_\pi (M) \hookrightarrow L^{p+1} (M) \ \text{is compact},
\end{equation}
as long as \eqref{eqn:pconstraints_enmin1} holds.  Also, in the case $I =
[0,\infty)$, we have the Gagliardo-Nirenberg inequality
\begin{equation}
\label{eqn:gagnir}
\| u \|_{L^{p+1}} \leq C_\pi \| u \|_{L^2}^{1-\gamma}
\| u \|_{H^1_\pi}^\gamma, \quad \forall \, u \in H^1_\pi (M),
\end{equation}
where
\begin{equation}
\label{eqn:gagnirgamma}
\gamma = \frac{n}{2} - \frac{n}{p+1}.
\end{equation}
Note that \eqref{eqn:gagnir} is a consequence of 
\begin{equation}
\label{eqn:pisobem}
H^1_\pi (M) \subset L^{2 +{4}/({n-2})} (M),
\end{equation}
if $n \geq 3$.  Given the constraint on $p$ in the first part of
\eqref{eqn:pconstraints_enmin1}, where $I = [0, \infty)$,
\begin{equation}
\label{eqn:enmingamma1}
\gamma (p+1) < 2.
\end{equation}
In case $I = [1,\infty)$ or $(-\infty, \infty)$, we have $H^1_\pi (M) \subset
L^\infty (M)$, and hence \eqref{eqn:gagnir} holds for all $\gamma \in (0,1)$,
so for every $p \in (1,\infty)$, we can pick $\gamma \in (0,1)$ such that
\eqref{eqn:gagnir} and \eqref{eqn:enmingamma1} hold.  As a result, we have
\begin{align}
\label{eqn:H1enmaequiv}
\| u \|_{H^1}^2 & = E(u) + \frac{1}{p+1} \| u \|_{L^{p+1}}^{p+1} + Q(u)  \\
& \leq E(u) + \tilde{C} Q(u)^{{(p+1)(1-\gamma)}/{2}}
\| u \|_{H^1}^{\gamma (p+1)} + Q(u) \notag
\end{align}
for all $u \in H^1_\pi (M)$, which gives a priori bounds on $\| u \|_{H^1}$
in terms of bounds on $E(u)$ and $Q(u)$, thanks to \eqref{eqn:enmingamma1}.
For $\EE( \beta, \pi)$ as in \eqref{eqn:calE}, the a priori bounds in
\eqref{eqn:H1enmaequiv} show that for each $\beta \in (0,1)$,
\begin{equation}
\label{eqn:EElowbd}
\EE (\beta, \pi) > -\infty.
\end{equation}

Having \eqref{eqn:EElowbd}, we choose a minimizing sequence $(u_\nu)$ such
that
\begin{equation}
\label{eqn:minseqenmin}
u_\nu \in H^1_\pi (M), \ \| u_\nu \|_{L^2}^2 \equiv \beta, \quad
E( u_\nu) \leq \EE ( \beta, \pi) + \frac{1}{ \nu}.
\end{equation}
In addition, we have the uniform upper bound
\begin{equation}
\label{eqn:H1upbd}
\| u_\nu \|_{H^1} \leq K < \infty.
\end{equation}
Hence, passing to a subsequence, which we continue to denote $(u_\nu)$, we
have
\begin{equation}
\label{eqn:weakstarconv_enmin}
u_\nu \to u \ \text{weak}^* \text{ in} \ H^1_\pi (M).
\end{equation}
Hence, by \eqref{eqn:usobbd2} we have
\begin{equation}
\label{eqn:strconv_enmin}
u_\nu \to u \ \text{in} \ L^{p+1} (M).
\end{equation}
We can actually say a bit more.  Namely, \eqref{eqn:usobbd2} extends to 
\begin{equation}
\label{eqn:usobbd3}
H^1_\pi (M) \hookrightarrow L^q (M) \ \text{is compact}, \quad \forall\,
q \in (2,p+1].
\end{equation}
Hence, \eqref{eqn:weakstarconv_enmin} gives
\begin{equation}
\label{eqn:qstrconv_enmin}
u_\nu \to u \ \text{in} \ L^q (M), \quad \forall\, q \in (2, p+1].
\end{equation}
We also want to complement this with the following result:
\begin{equation}
\label{eqn:2strconv_enmin}
u_\nu \to u \ \text{in norm, in} \ L^2 (M).
\end{equation}
Desirable consequences of \eqref{eqn:2strconv_enmin} arise as follows.  
First, \eqref{eqn:weakstarconv_enmin} implies
\begin{equation}
\label{eqn:nablabd_enmin}
\| \nabla u \|_{L^2} \leq \liminf_{\nu \to \infty} \| \nabla u_\nu \|_{L^2}, 
\end{equation}
hence, by \eqref{eqn:strconv_enmin}, \eqref{eqn:energy2} and
\eqref{eqn:minseqenmin},
\begin{equation}
\label{eqn:Eupbd_enmin}
E(u) \leq \liminf_{\nu \to \infty} E(u_\nu) = \EE ( \beta, \pi).
\end{equation}
Hence, given \eqref{eqn:2strconv_enmin}, we have $\| u \|_{L^2}^2 = \beta$,
so by definition \eqref{eqn:calE}, $E(u) > \EE (\beta, \pi)$.  Comparison
with \eqref{eqn:Eupbd_enmin} gives
\begin{equation}
\label{eqn:Eequal}
E (u) = \EE (\beta, \pi), \ u \in H^1_\pi (M), \ Q(u) = \beta,
\end{equation}
and hence we have the desired energy minimizer.  The result \eqref{eqn:Eequal}
also gives
\begin{equation}
\label{eqn:H1lim}
\| u \|_{H^1} = \lim_{\nu \to \infty} \| u_\nu \|_{H^1},
\end{equation}
which in conjunction with \eqref{eqn:weakstarconv_enmin} gives
\begin{equation}
\label{eqn:H1stconv}
u_\nu \to u \ \text{in} \ H^1_\pi (M).
\end{equation}

Thus, we are left with establishing \eqref{eqn:2strconv_enmin}.  To do this,
we use the concentration-compactness method of P.-L.~Lions \cite{L1}.  As we
are working on spaces that are not necessarily Euclidean, we use the
formulation from \cite{CMMT}, Appendix A.1, which we recall
here for convenience.

\begin{lemma}
\label{lem:conccompman}
Let $X$ be a metric space and $\{ \mu_\nu \}$ a sequence of positive measures
on $X$, each with fixed mass $\beta > 0$.  Then, possibly passing to a
subsequence, one of the following three cases holds:
\begin{enumerate}
\item Vanishing:  If $B_R (y) = \{ x \in X \ : \ \dist (x,y) \leq R \}$, then
for all $R \in (0,\infty)$,
\begin{equation}
\label{eqn:ccvan}
\lim_{\nu \to \infty} \sup_{y \in M} \mu_\nu (B_R (y)) = 0.
\end{equation}

\item Concentration:  There is a sequence $\{ y_\nu \} \subset X$ with the
property that for each $\epsilon > 0$, there exists $R(\epsilon) < \infty$
such that
\begin{equation}
\label{eqn:ccconc}
\mu_\nu ( B_{R(\epsilon)} (y_\nu)) > \beta - \epsilon.
\end{equation}

\item Splitting:  There exists $\alpha \in (0,\beta)$ with the following
properties.  For each $\epsilon > 0$, there exists $\nu_0 \geq 1$ and sets
$E_\nu^\sharp$, $E_\nu^b \subset X$ such that
\begin{equation}
\label{eqn:ccsplit1}
\dist ( E_\nu^\sharp, E_\nu^b) \to \infty \ \text{as} \ \nu \to \infty,
\end{equation}
and 
\begin{equation}
\label{eqn:ccsplit2}
\left|  \mu_\nu (E_\nu^\sharp) - \alpha \right| < \epsilon, \quad
\left| \mu_\nu (E_\nu^b) - (\beta - \alpha) \right| < \epsilon, \quad
\forall\, \nu \geq \nu_0.
\end{equation}

\end{enumerate}
\end{lemma}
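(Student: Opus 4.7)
The plan is to follow the classical L\'evy--Lions concentration-compactness scheme, adapted to a general metric space setting. Introduce the L\'evy concentration functions
$$
Q_\nu(R) = \sup_{y \in X} \mu_\nu(B_R(y)),
$$
each nondecreasing in $R$ and bounded above by $\beta$. Using Helly's selection theorem on a countable dense set of radii together with a diagonal extraction, I pass to a subsequence along which $Q_\nu(R) \to Q(R)$ pointwise on that set, with $Q$ nondecreasing. Set $\alpha = \lim_{R \to \infty} Q(R) \in [0,\beta]$. The trichotomy is then driven entirely by the value of $\alpha$: vanishing corresponds to $\alpha = 0$, concentration to $\alpha = \beta$, and splitting to $0 < \alpha < \beta$.

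If $\alpha = 0$, then $Q_\nu(R) \to 0$ for each $R$, which is precisely the vanishing case. If $\alpha = \beta$, then for each $\epsilon > 0$ I pick a continuity point $R(\epsilon)$ of $Q$ with $Q(R(\epsilon)) > \beta - \epsilon/2$; for $\nu$ large, $Q_\nu(R(\epsilon)) > \beta - \epsilon$, so by definition of the supremum I can choose $y_\nu \in X$ realizing $\mu_\nu(B_{R(\epsilon)}(y_\nu)) > \beta - \epsilon$. A diagonal argument over $\epsilon_k = 1/k$ then produces a single sequence $y_\nu$ with the concentration property (\ref{eqn:ccconc}).

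The splitting case $0 < \alpha < \beta$ is the technical heart. Given $\epsilon > 0$, first fix $R_1$ with $Q(R_1) > \alpha - \epsilon/2$ and extract $y_\nu$ with $f_\nu(R_1) := \mu_\nu(B_{R_1}(y_\nu)) > \alpha - \epsilon$. The $f_\nu$ are nondecreasing in $R$ and bounded above by $Q_\nu$, hence by $\alpha$ in the limit; a further subsequence makes $f_\nu \to f$ pointwise on a dense set, with $f$ nondecreasing and $f \leq \alpha$. Combining $f(R_1) \geq \alpha - \epsilon$ with $\sup_R f \leq \alpha$, the annular mass $\mu_\nu(B_R(y_\nu) \setminus B_{R_1}(y_\nu))$ can be driven below $2\epsilon$ by taking $R$ fixed but large and then $\nu$ large. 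Setting $E_\nu^\sharp = B_{R_1}(y_\nu)$ and $E_\nu^b = X \setminus B_{R_\nu}(y_\nu)$ for a sequence $R_\nu \to \infty$ chosen (via another diagonal) to grow slowly enough that the annular mass between $R_1$ and $R_\nu$ remains $o(1)$, I obtain $\mu_\nu(E_\nu^\sharp) \to \alpha$, $\mu_\nu(E_\nu^b) \to \beta - \alpha$, and $\dist(E_\nu^\sharp, E_\nu^b) \geq R_\nu - R_1 \to \infty$, exactly as required by (\ref{eqn:ccsplit1})--(\ref{eqn:ccsplit2}). A final diagonal extraction over $\epsilon_k = 1/k$ removes the $\epsilon$-dependence of the intermediate choices.

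The main obstacle is orchestrating the three nested diagonal extractions -- first in the radii defining $Q$, then in the radii defining $f$, and finally in $\epsilon \downarrow 0$ -- while ensuring that the centers $y_\nu$ and cutoff radii $R_\nu$ selected at successive stages remain compatible with a single fixed subsequence. This is essentially a careful bookkeeping exercise, but it is also the step where the absence of translation structure on a general metric space is felt most sharply: unlike the original Euclidean argument, there is no group action available to normalize the $y_\nu$, so the monotone-function comparison between $f_\nu$ and $Q_\nu$ must carry the full load of controlling the outer masses.
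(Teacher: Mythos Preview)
The paper does not actually prove this lemma: it is quoted verbatim from \cite{CMMT}, Appendix A.1, and used as a black box. Your sketch is the standard L\'evy--Lions argument (concentration functions, Helly selection, trichotomy on $\alpha=\lim_{R\to\infty}Q(R)$), which is almost certainly what \cite{CMMT} records as well, so there is no meaningful methodological divergence to discuss.

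One small imprecision worth flagging: in the concentration case $\alpha=\beta$, what you call ``a diagonal argument over $\epsilon_k=1/k$'' does not by itself produce a single center sequence $\{y_\nu\}$ independent of $\epsilon$. The clean way is to fix the centers once at some coarse level, say $\epsilon_0<\beta/2$, obtaining $y_\nu$ with $\mu_\nu(B_{R(\epsilon_0)}(y_\nu))>\beta/2$; then for any finer $\epsilon$ the near-optimal ball $B_{R(\epsilon)}(y_\nu^{(\epsilon)})$ also has mass $>\beta/2$, so the two balls must intersect (else the total mass would exceed $\beta$), giving $B_{R(\epsilon)}(y_\nu^{(\epsilon)})\subset B_{2R(\epsilon)+R(\epsilon_0)}(y_\nu)$. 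This overlap argument, not a diagonal, is what pins down a single $\{y_\nu\}$; with that correction your outline is sound.
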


\begin{remark}
In \eqref{eqn:ccconc}, $R(\epsilon)$ is independent of $\nu$ and $\{ y_\nu \}$
is independent of $\epsilon$.
\end{remark}

We apply this lemma to the setting where $X = M$ is a complete, Riemannian
manifold with rotational symmetry as defined above, and the measures
$(\mu_\nu)$ are given by
\begin{equation}
\label{eqn:ccmeas_enmin}
\mu_\nu (E) = \int_E | u_\nu |^2 d \Vol.
\end{equation}

We will impose one additional condition on $M$, namely that
\begin{equation}
\label{eqn:Mbdgeom}
M \ \text{has bounded geometry}.
\end{equation}
In such a case, we have the following

\begin{lemma}
\label{lem:van}
Assuming $\{ u_\nu \}$ is a bounded sequence in $H^1 (M)$ and
\begin{equation}
\label{eqn:limsupto0}
\lim_{\nu \to \infty} \sup_{y \in M} \int_{B_R (y)} | u_\nu|^2\, d \Vol = 0,
\ \text{for some} \ R > 0.
\end{equation}
Then,
\begin{equation}
\label{eqn:Lrto0}
2 < r < \frac{2n}{n-2} \Longrightarrow \| u_\nu \|_{L^r (M)} \to 0.
\end{equation}
\end{lemma}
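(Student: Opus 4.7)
The plan is to exploit the bounded geometry hypothesis on $M$ to carry out the standard Lions-style covering argument. I would first establish $\|u_\nu\|_{L^r(M)} \to 0$ for the pivot exponent $r_0 = 2 + 4/n$, where the local Gagliardo--Nirenberg inequality has a particularly clean form, and then interpolate to reach all other admissible $r$.

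Using \eqref{eqn:Mbdgeom}, I would select a countable family $\{y_j\}\subset M$ (for instance a maximal $R/2$-separated net) such that $\{B_R(y_j)\}$ covers $M$ with bounded overlap multiplicity $N$. Bounded geometry also provides the uniform local Sobolev embedding $H^1(B_R(y)) \hookrightarrow L^{2n/(n-2)}(B_R(y))$ with constant independent of $y$, obtained by transporting the Euclidean Sobolev inequality through normal coordinate charts of controlled size on which the metric is bi-Lipschitz comparable to the Euclidean one (for $n=2$, replace $2n/(n-2)$ by any fixed large $q \in (2,\infty)$). Interpolating $L^r$ between $L^2$ and $L^{2n/(n-2)}$ on each such ball yields the uniform local Gagliardo--Nirenberg inequality
\[
\|u\|_{L^r(B_R(y))} \leq C \|u\|_{L^2(B_R(y))}^{1-\alpha} \|u\|_{H^1(B_R(y))}^{\alpha}, \qquad \alpha = n\Bigl(\tfrac12 - \tfrac{1}{r}\Bigr),
\]
with $C$ independent of $y$.

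The key observation is that the pivot $r_0 = 2 + 4/n$ produces exactly $r_0\alpha_0 = 2$ and $r_0(1-\alpha_0) = 4/n$. Raising the above inequality to the $r_0$-th power, summing over $j$, pulling the hypothesized supremum outside, and using bounded multiplicity twice then gives
\[
\|u_\nu\|_{L^{r_0}(M)}^{r_0} \leq C_1 \Bigl(\sup_{y \in M} \|u_\nu\|_{L^2(B_R(y))}^2\Bigr)^{2/n} \|u_\nu\|_{H^1(M)}^2.
\]
Combined with the uniform $H^1$-bound on $(u_\nu)$ and the hypothesis \eqref{eqn:limsupto0}, this forces $\|u_\nu\|_{L^{r_0}(M)} \to 0$. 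For a general $r \in (2, 2n/(n-2))$, Hölder interpolation finishes: if $r < r_0$ write $\|u_\nu\|_{L^r} \leq \|u_\nu\|_{L^2}^{1-t}\|u_\nu\|_{L^{r_0}}^t$, and if $r > r_0$ interpolate through $L^{2n/(n-2)}$, using that both $\|u_\nu\|_{L^2}$ and $\|u_\nu\|_{L^{2n/(n-2)}}$ are controlled by the uniform $H^1$-bound and the Sobolev inequality.

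The main obstacle is not any single estimate but the initial packaging: obtaining the uniform-in-$y$ Sobolev and Gagliardo--Nirenberg constants on balls of fixed radius together with the bounded-multiplicity cover. Both are standard consequences of the bounded geometry assumption, but without positive injectivity radius and curvature bounds there is no way to trivialize the local estimates, and the supremum in \eqref{eqn:limsupto0} would fail to interact with the global $H^1$-norm in the uniform fashion required for the argument to close.
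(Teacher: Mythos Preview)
Your proposal is correct and follows essentially the same approach the paper invokes: the paper does not give its own proof but cites Lions's Lemma~I.1 in \cite{L2}, observing that the only two ingredients needed are uniform Sobolev embeddings on balls of fixed radius and a bounded-multiplicity cover by such balls, both supplied by the bounded-geometry hypothesis \eqref{eqn:Mbdgeom}. Your use of the pivot exponent $r_0=2+4/n$ (so that $r_0\alpha_0=2$ and the sum over balls collapses to $\|u_\nu\|_{H^1(M)}^2$) followed by interpolation is exactly Lions's argument, carried out in the detail the paper omits.
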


As noted in \cite{CMMT}, this is a special case of
Lemma I.1 on page 231 of \cite{L2}.
In \cite{L2} this lemma was established for $M = \RR^n$, but the only
two geometrical properties used in the proof there are the existence of
Sobolev embeddings on balls of radius $R>0$ and the fact that there exists
$m>0$ such that $\RR^n$ has a covering by balls of radius $R$ in such a way
that each point is contained in at most $m$ balls.  These two properties hold
on every Riemannian manifold with $C^\infty$ bounded geometry.
See \cite{CMMT} for details.

Our next goal is to rule out the property of Alternative $(1)$ (Vanishing)
from Lemma \ref{lem:conccompman}.  To achieve this, we require one further
hypothesis:
\begin{equation}
\label{eqn:EElt0}
\EE (\beta, \pi) < 0,
\end{equation}
or equivalently, we assume there exists $\varphi \in H^1_\pi (M)$ such that
$Q (\phi) = \beta$ and $E(\phi) < 0$.  Note that if we pick $\phi_1 \in
H^1_\pi (M)$ such that $Q (\phi_1) = 1$ and set $\phi_\beta = \beta^{1/2}
\phi_1$, then $Q(\phi_\beta) = \beta$ and
\begin{equation}
\label{eqn:Ebetascaled}
E( \phi_\beta) = \frac{\beta}{2} \| \nabla \phi_1 \|_{L^2}^2 -
\frac{\beta^{({p+1})/{2}}}{p+1} \int_M | \phi_1|^{p+1}\, d \Vol,
\end{equation}
which tends to $-\infty$ as $\beta \to \infty$.  Hence,
\begin{equation}
\label{eqn:calElim}
\lim_{\beta \to \infty} \EE (\beta, \pi) = - \infty,
\end{equation}
so \eqref{eqn:EElt0} holds when $\beta$ is sufficiently large.  Now, we are
prepared to rule out vanishing.

\begin{lemma}
\label{lem:novanishing}
Under the assumptions on $M$ and $\EE (\beta, \pi)$ above, if $\{ u_\nu \}
\subset H^1_\pi (M)$ is an energy minimizing sequence, as in
\eqref{eqn:minseqenmin}, then vanishing as in \eqref{eqn:ccvan} cannot occur.
\end{lemma}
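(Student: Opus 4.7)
The plan is to argue by contradiction: suppose vanishing holds and deduce that $\liminf E(u_\nu)\geq 0$, which clashes with the hypothesis $\EE(\beta,\pi)<0$ imposed in \eqref{eqn:EElt0} and the minimizing property \eqref{eqn:minseqenmin}.

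First, I would record that the sequence $(u_\nu)$ is bounded in $H^1(M)$ by \eqref{eqn:H1upbd}, and that the bounded-geometry hypothesis \eqref{eqn:Mbdgeom} is in force, so Lemma \ref{lem:van} applies. If vanishing \eqref{eqn:ccvan} held for every $R>0$, then in particular \eqref{eqn:limsupto0} would hold for some $R>0$, and Lemma \ref{lem:van} would yield
\begin{equation*}
\| u_\nu \|_{L^r (M)} \longrightarrow 0, \quad \forall\, r \in \Bigl(2, \tfrac{2n}{n-2}\Bigr).
\end{equation*}

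Next I would upgrade this to $\|u_\nu\|_{L^{p+1}}\to 0$. In the case $I=[0,\infty)$, the constraint $1<p<1+4/n$ in \eqref{eqn:pconstraints_enmin1} gives $p+1<2+4/n<2n/(n-2)$, so the conclusion is immediate. In the cases $I=[1,\infty)$ or $I=(-\infty,\infty)$, the exponent $p+1$ may exceed $2n/(n-2)$, but in these settings Lemma \ref{lem:intvolel} together with \eqref{eqn:kineticpi} gives $H^1_\pi(M)\hookrightarrow L^\infty(M)$, so $\|u_\nu\|_{L^\infty}$ is uniformly bounded. Picking any $r\in(2,2n/(n-2))$ and using the interpolation bound
\begin{equation*}
\| u_\nu \|_{L^{p+1}}^{p+1} \leq \| u_\nu \|_{L^\infty}^{p+1-r}\, \| u_\nu \|_{L^r}^{r}
\end{equation*}
together with the previous display yields $\|u_\nu\|_{L^{p+1}}\to 0$ in this case as well.

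Finally, from the definition \eqref{eqn:energy2} of $E$ and the nonnegativity of $\|\nabla u_\nu\|_{L^2}^2$,
\begin{equation*}
E(u_\nu) \geq -\frac{1}{p+1}\int_M |u_\nu|^{p+1}\, d\Vol \longrightarrow 0.
\end{equation*}
On the other hand, by \eqref{eqn:minseqenmin} and \eqref{eqn:EElt0},
\begin{equation*}
\lim_{\nu\to\infty} E(u_\nu) = \EE(\beta,\pi) < 0,
\end{equation*}
a contradiction. Hence vanishing cannot occur. The only real subtlety is the case $p+1\geq 2n/(n-2)$, which I expect to handle cleanly with the $L^\infty$ embedding available when $I$ is non-compact at both ends (or bounded away from the center $o$); everything else is routine.
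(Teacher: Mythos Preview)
Your proof is correct and follows essentially the same contradiction strategy as the paper: vanishing plus Lemma~\ref{lem:van} forces $\|u_\nu\|_{L^{p+1}}\to 0$, which is incompatible with $E(u_\nu)\to\EE(\beta,\pi)<0$. The one place you differ is in upgrading $L^r$ decay to $L^{p+1}$ decay when $p+1$ may exceed $2n/(n-2)$: the paper routes through the weak limit $u$ and the already-established compact embedding $H^1_\pi(M)\hookrightarrow L^{p+1}(M)$ (so $u_\nu\to u$ in $L^{p+1}$, and $u=0$ since $u_\nu\to 0$ in some $L^r$), whereas you use the $L^\infty$ bound available for $I=[1,\infty)$ or $(-\infty,\infty)$ together with the interpolation $\|u_\nu\|_{L^{p+1}}^{p+1}\le \|u_\nu\|_{L^\infty}^{p+1-r}\|u_\nu\|_{L^r}^r$. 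Your route is a bit more self-contained, since it does not need to invoke the weak limit or the compactness of the embedding; the paper's route has the virtue of reusing machinery already in place. Either way the argument goes through.
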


\begin{proof}
Assume \eqref{eqn:ccvan} does not occur.  Then, Lemma \ref{lem:van} gives
\begin{equation}
\label{eqn:Lrto0ap}
\| u_\nu \|_{L^r} \to 0, \quad \forall  \, r \in \left( 2, \frac{2n}{n-2} \right).
\end{equation}
Then, \eqref{eqn:qstrconv_enmin} implies $u = 0$, which implies
\begin{equation}
\label{eqn:Lpto0ap}
\| u_\nu \|_{L^{p+1}} \to 0
\end{equation}
by \eqref{eqn:strconv_enmin}. However, \eqref{eqn:Lpto0ap} and
\eqref{eqn:minseqenmin} imply
\begin{equation}
\label{eqn:nablatoEap}
\frac12 \| \nabla u \|_{L^2}^2 \to \EE (\beta, \pi),
\end{equation}
which is impossible given assumption \eqref{eqn:EElt0}.
\end{proof}

Next, we wish to rule out alternative $(3)$ (splitting) in Lemma
\ref{lem:conccompman}.  Following the methods in \cite{L1,L2}, we proceed via
the following subadditivity result.

\begin{lemma}
\label{lem:subadd}
Under the assumptions of Lemma \ref{lem:novanishing}, if $0 < \eta < \beta$,
then
\begin{equation}
\label{eqn:subadd}
\EE (\beta, \pi) < \EE (\beta - \eta, \pi) + \EE ( \eta, \pi).
\end{equation}
\end{lemma}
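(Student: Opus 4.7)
The strategy is to exploit the $L^2$-scaling $u \mapsto \theta^{1/2} u$ together with the hypothesis $\EE(\beta,\pi) < 0$, following the standard Lions template. First I would establish a scaling inequality: whenever $\theta > 1$ and $\EE(\gamma,\pi) < 0$,
\begin{equation*}
\EE(\theta\gamma,\pi) < \theta\,\EE(\gamma,\pi).
\end{equation*}
Given $u \in H^1_\pi(M)$ with $Q(u)=\gamma$ and $E(u) < \EE(\gamma,\pi)/2 < 0$, the rescaling $v = \theta^{1/2}u$ satisfies $Q(v) = \theta\gamma$ and
\begin{equation*}
E(v) = \theta E(u) - \frac{\theta^{(p+1)/2} - \theta}{p+1}\,\|u\|_{L^{p+1}}^{p+1}.
\end{equation*}
Since $E(u) < 0$ forces $\|u\|_{L^{p+1}}^{p+1} \geq (p+1)|E(u)| \geq (p+1)|\EE(\gamma,\pi)|/2$, a uniform positive lower bound, and since $p,\theta > 1$ gives $\theta^{(p+1)/2} > \theta$, one obtains a uniform gap $E(v) \leq \theta E(u) - c$ that yields the strict scaling inequality on passing to the infimum.

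Next I would verify $\EE(\gamma,\pi) < 0$ for every $\gamma \in (0,\beta]$. For $\gamma \geq \beta$ this follows from monotonicity of $\gamma \mapsto \EE(\gamma,\pi)/\gamma$ implied by the scaling inequality. For $\gamma \in (0,\beta)$ I would use a test function of the form $u(r,g\cdot p_0) = \psi(r)\pi(g)\phi$ from \eqref{eqn:rgp}, with $\phi \in V_0\setminus\{0\}$ and $\psi$ a cutoff supported in a short interval about some $r_0 > 0$ where the volume element $A(r)\,dr\,d\SS$ is essentially Euclidean; an Euclidean-type dilation then produces (in the mass-subcritical case $p < 1+4/n$) an arbitrarily large Weinstein-type ratio $\|u\|_{L^{p+1}}^{p+1}/(\|\nabla u\|_{L^2}^2\,\|u\|_{L^2}^{p-1})$, so that $E(cu) < 0$ at mass $\|cu\|_{L^2}^2 = \gamma$ for any prescribed $\gamma > 0$. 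For $I = [1,\infty)$ or $(-\infty,\infty)$ no constraint on $p$ is needed, since $H^1_\pi(M) \hookrightarrow L^\infty(M)$ permits a direct amplitude scaling.

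Finally, by symmetry of the target inequality we may assume $\eta \leq \beta - \eta$. The preceding step gives $\EE(\eta,\pi), \EE(\beta-\eta,\pi) < 0$, so the scaling inequality applies in two ways: with $\gamma = \eta$ and $\theta = (\beta-\eta)/\eta \geq 1$ it gives $\tfrac{\eta}{\beta-\eta}\EE(\beta-\eta,\pi) \leq \EE(\eta,\pi)$, and with $\gamma = \beta-\eta$ and $\theta = \beta/(\beta-\eta) > 1$ it gives the strict bound
\begin{equation*}
\EE(\beta,\pi) < \tfrac{\beta}{\beta-\eta}\EE(\beta-\eta,\pi) = \EE(\beta-\eta,\pi) + \tfrac{\eta}{\beta-\eta}\EE(\beta-\eta,\pi) \leq \EE(\beta-\eta,\pi) + \EE(\eta,\pi),
\end{equation*}
which is the claimed subadditivity; the edge case $\eta = \beta/2$ is directly absorbed by the scaling inequality with $\theta = 2$. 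The principal technical obstacle is the second step — producing negative-energy test functions of prescribed arbitrarily small mass: on $\RR^n$ this is immediate from the Euclidean dilation $\phi \mapsto \sigma^{n/2}\phi(\sigma\,\cdot\,)$, but on a general rotationally symmetric $M$ one must carefully execute the local Euclidean-model argument while preserving the equivariance condition $u \in H^1_\pi(M)$.
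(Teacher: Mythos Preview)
Your steps 1 and 3 are sound, but step 2 is both the weak link and unnecessary. The ``Euclidean-type dilation'' sketch for producing negative-energy test functions of arbitrarily small prescribed mass in $H^1_\pi(M)$ does not work as written: a function of the form $\psi(r)\pi(g)\phi$ supported near a fixed sphere $\{r=r_0\}$ admits only a one-dimensional dilation in $r$, which gives the wrong scaling exponents, and the angular contribution $\mu_\pi(r_0)^2\int|\psi|^2 A\,dr$ to $\|\nabla u\|_{L^2}^2$ does not scale away. Moreover, the paper itself notes (around \eqref{eqn:Ebetascaled}--\eqref{eqn:calElim}) that $\EE(\beta,\pi)<0$ is only asserted for $\beta$ sufficiently large, so the blanket claim $\EE(\gamma,\pi)<0$ for all small $\gamma>0$ is not something you should expect to extract from the geometry of a general rotationally symmetric $M$.

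The paper's proof (deferred to \cite{CMMT}) bypasses step 2 entirely, and your own computation in step 1 already contains the key. With no sign hypothesis on $\EE(\gamma,\pi)$, the identity
\[
E(\theta^{1/2}u)=\theta E(u)-\frac{\theta^{(p+1)/2}-\theta}{p+1}\,\|u\|_{L^{p+1}}^{p+1}
\]
and the nonnegativity of the second term give the \emph{non-strict} scaling inequality $\EE(\theta\gamma,\pi)\le\theta\,\EE(\gamma,\pi)$ for every $\theta\ge 1$ and every $\gamma>0$. Applying this with $(\gamma,\theta\gamma)=(\eta,\beta)$ and $(\gamma,\theta\gamma)=(\beta-\eta,\beta)$ and adding yields the non-strict subadditivity $\EE(\beta,\pi)\le\EE(\eta,\pi)+\EE(\beta-\eta,\pi)$. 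If equality held, both applications would be equalities, forcing $\EE(\eta,\pi)=\tfrac{\eta}{\beta}\EE(\beta,\pi)<0$; your strict scaling inequality from step 1 is now legitimately available at $\gamma=\eta$ and gives $\EE(\beta,\pi)<\tfrac{\beta}{\eta}\EE(\eta,\pi)=\EE(\beta,\pi)$, a contradiction. This is exactly why the paper remarks, just after the lemma, that the proof ``does not need either $\EE(\beta-\eta,\pi)<0$ or $\EE(\eta,\pi)<0$'' as separate hypotheses.
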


\begin{proof}

The proof here is identical to that in \cite{CMMT}, Proposition $3.1.3$.  

\end{proof}

We mention parenthetically that the proof of Lemma \ref{lem:subadd} does
not need either $\EE(\beta-\eta,\pi)<0$ or $\EE(\eta,\pi)<0$.

\begin{lemma}
\label{lem:nosplit}
In the setting of Lemma \ref{lem:subadd}, if $\{ u_\nu \} \subset H^1_\pi (M)$
is an energy minimizing sequence as in \eqref{eqn:minseqenmin}, then splitting
as in \eqref{eqn:ccsplit1}-\eqref{eqn:ccsplit2} with $\mu_\nu$ as in
\eqref{eqn:ccmeas_enmin} cannot occur.
\end{lemma}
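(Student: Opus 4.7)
The plan is to derive a contradiction with the strict subadditivity of Lemma \ref{lem:subadd}. Suppose, for contradiction, that the minimizing sequence $\{ u_\nu \}$ splits: there is $\alpha \in (0,\beta)$ and sets $E_\nu^\sharp, E_\nu^b \subset M$ whose separation tends to infinity and whose $\mu_\nu$-masses tend to $\alpha$ and $\beta - \alpha$, where $\mu_\nu = |u_\nu|^2\, d\Vol$. The first step is to promote these to $G$-invariant splitting sets. Since $u_\nu \in H^1_\pi (M)$ forces $|u_\nu(gx)| = |u_\nu(x)|$, the measure $\mu_\nu$ is $G$-invariant. Pushing $\mu_\nu$ down to a measure on the orbit space $M/G \simeq I$ and applying the one-dimensional version of Lions' trichotomy (equivalently, invoking a $G$-equivariant refinement of Lemma \ref{lem:conccompman}), we may therefore assume $E_\nu^\sharp = I_\nu^\sharp \times \SS^{n-1}$ and $E_\nu^b = I_\nu^b \times \SS^{n-1}$ for subsets $I_\nu^\sharp, I_\nu^b \subset I$ whose separation in $r$ tends to infinity.

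Next I would pick radial cutoffs $\chi_\nu^\sharp, \chi_\nu^b \in C^\infty (M)$, depending only on $r$ (hence $G$-invariant), with $\chi_\nu^\star \equiv 1$ on $I_\nu^\star \times \SS^{n-1}$, disjoint supports, and Lipschitz constants of order $R_\nu^{-1}$ where $R_\nu \to \infty$ but $R_\nu \ll \dist (I_\nu^\sharp, I_\nu^b)$. Set $u_\nu^\sharp = \chi_\nu^\sharp u_\nu$ and $u_\nu^b = \chi_\nu^b u_\nu$; the $G$-invariance of the cutoffs forces $u_\nu^\star \in H^1_\pi (M)$. Writing $\alpha_\nu = \| u_\nu^\sharp \|_{L^2}^2$ and $\beta_\nu = \| u_\nu^b \|_{L^2}^2$, the splitting hypothesis yields $\alpha_\nu \to \alpha$ and $\beta_\nu \to \beta - \alpha$.

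Standard localization estimates --- the Leibniz rule and an $\epsilon$-Young inequality for the gradient term, together with disjointness of supports for the $L^{p+1}$ term --- then give
\[
E(u_\nu) \geq E(u_\nu^\sharp) + E(u_\nu^b) - o(1),
\]
the gradient error being $O(R_\nu^{-1})$ from $\| \nabla \chi_\nu^\star \|_{L^\infty}$ times the uniform $L^2$-bound on $u_\nu$, and the $L^{p+1}$ error controlled because the mass of $u_\nu$ on $M \setminus (\supp \chi_\nu^\sharp \cup \supp \chi_\nu^b)$ tends to $0$. Rescaling $u_\nu^\star$ to have $L^2$-mass exactly $\alpha$ or $\beta - \alpha$ alters $E$ by only $o(1)$ (since $\alpha_\nu \to \alpha$ and $\beta_\nu \to \beta - \alpha$), so passing to the limit gives
\[
\EE (\beta, \pi) \geq \EE (\alpha, \pi) + \EE (\beta - \alpha, \pi),
\]
contradicting Lemma \ref{lem:subadd}.

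The main obstacle is the $G$-equivariant reduction in the first step: the sets produced by Lemma \ref{lem:conccompman} need not be $G$-invariant a priori, so one must either apply the trichotomy directly to the one-dimensional pushforward on the orbit space $I$ or verify a $G$-invariant variant of the lemma. Once the splitting can be realized by radial shells, the localization of $E$ is routine and the subadditivity inequality closes the argument.
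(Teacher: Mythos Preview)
Your argument follows essentially the same route as the paper's: localize with $G$-invariant cutoffs, split $E(u_\nu)$ up to small error, and contradict the strict subadditivity of Lemma~\ref{lem:subadd}. The paper simply asserts that $E_\nu^\sharp$ and $E_\nu^b$ can be taken $G$-invariant and then builds Lipschitz-$1$ cutoffs supported in a thin shell $S_\nu$ of width $2$, chosen by pigeonhole so that $\int_{S_\nu}(|u_\nu|^2+|\nabla u_\nu|^2+|u_\nu|^{p+1})<\epsilon$; your version instead pushes the measures to the orbit space $I$ (a cleaner justification of $G$-invariance) and uses cutoffs with Lipschitz constant $R_\nu^{-1}\to 0$ to kill the gradient error.

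There is one genuine gap. You write that the $L^{p+1}$ error is controlled ``because the mass of $u_\nu$ on $M\setminus(\supp\chi_\nu^\sharp\cup\supp\chi_\nu^b)$ tends to $0$,'' but small $L^2$ mass on a set does not by itself force small $L^{p+1}$ mass there, and the relevant region is in any case the full set $\{1-(\chi_\nu^\sharp)^{p+1}-(\chi_\nu^b)^{p+1}>0\}$, which includes the transition shells of width $\sim R_\nu$. Since the sign of this term in $E$ works against you, you must show $\int_{\text{transition}}|u_\nu|^{p+1}=o(1)$. The fix is exactly the pigeonhole step the paper uses: $\|u_\nu\|_{L^{p+1}}$ is uniformly bounded (from \eqref{eqn:H1upbd} and \eqref{eqn:usobbd2}), and $\dist(E_\nu^\sharp,E_\nu^b)\to\infty$, so among the $\sim d_\nu$ disjoint unit shells in the gap one can choose $S_\nu$ with $\int_{S_\nu}(|u_\nu|^2+|\nabla u_\nu|^2+|u_\nu|^{p+1})<\epsilon$ and place the cutoff transition there. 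With that adjustment your argument goes through; the rescaling to exact masses $\alpha,\beta-\alpha$ that you include is also a point the paper glosses over.
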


\begin{proof}

Assume splitting does occur, in other words
\eqref{eqn:ccsplit1}-\eqref{eqn:ccsplit2} hold, with $\mu_\nu$ as in
\eqref{eqn:ccmeas_enmin}.  We can assume that $E_\nu^\sharp$ and $E_\nu^b$
are invariant under the action of $G$.  Choose $\epsilon > 0$ sufficiently
small such that
\begin{equation}
\label{eqn:EEubeps}
\EE (\beta, \pi) < \EE (\alpha, \pi) + \EE ( \beta - \alpha, \pi) - C_1
\epsilon,
\end{equation}
where $C_1 > 0$ is a sufficiently large constant to be fixed later.
Since $\| u_\nu \|_{H^1 (M)}$ and $\| u_\nu \|_{L^{p+1} (M)}$ are uniformly
bounded, it follows from \eqref{eqn:ccsplit1} that there exists $\nu_1$ such
that $\nu \geq \nu_1$ implies
\begin{equation}
\label{eqn:bound}
\int_{S_\nu} |u_\nu|^2\, d \Vol + \int_{S_\nu} | \nabla u |^2\, d \Vol +
\int_{S_\nu} | u_\nu |^{p+1}\, d \Vol < \epsilon,
\end{equation}
where $S_\nu$ is a set of the form
\begin{equation}
\label{eqn:Snu}
S_\nu = \{ x \in M \ : \ d_\nu < \dist (x, E_\nu^\sharp) \leq d_\nu + 2 \}
\subset M \setminus ( E_\nu^\sharp \cup E_\nu^b),
\end{equation}
for some $d_\nu > 0$.  In other words, 
\begin{equation}
S_\nu = \tilde{E}_\nu (d_\nu + 2) \setminus \tilde{E}_\nu (d_\nu),
\end{equation}
where
\begin{equation}
\label{eqn:tildeEnu}
\tilde{E}_\nu (r) = \{ x \in M \ : \ \dist (x, E_\nu^\sharp) \leq r \}.
\end{equation}
Now, define functions $\chi_\nu^\sharp$ and $\chi_\nu^b$ by
\begin{align}
\chi_\nu^\sharp & = 1, \hspace{3cm} \text{if} \ x \in \tilde{E}_\nu (d_\nu),
\notag \\
 & = 1- \dist (x, \tilde{E}_\nu (d_\nu) ), \quad \text{if} \ x \in
 \tilde{E}_\nu (d_\nu + 1), \label{eqn:chinusharp} \\
 & = 0, \hspace{3cm} \text{if} \ x \notin \tilde{E}_\nu (d_\nu + 2), \notag 
 \end{align}
 and 
\begin{align}
\chi_\nu^b & = 0, \hspace{3cm} \text{if} \ x \in \tilde{E}_\nu (d_\nu + 1),
\notag \\
 & = \dist (x, \tilde{E}_\nu (d_\nu+1) ),
 \quad \text{if} \ x \in \tilde{E}_\nu
 (d_\nu + 2), \label{eqn:chinub} \\
 & = 1, \hspace{3cm} \text{if} \ x \notin \tilde{E}_\nu (d_\nu + 2). \notag 
 \end{align}
 These functions are both Lipschitz with Lipschitz constant $1$ and almost
 disjoint supports.  Also, they are invariant under the action of $G$.  Set
 \begin{equation}
 \label{eqn:unusharp_b}
 u_\nu^\sharp = \chi_\nu^\sharp u_\nu, \ u_\nu^b = \chi_\nu^b u_\nu \in
 H^1_\pi (M).
 \end{equation}
 Note that
 \begin{equation}
 \label{eqn:Qnu}
 Q( u_\nu^\sharp) = \alpha_\nu, \ \ Q (u_\nu^b) = \beta_\nu - \alpha_\nu,
 \end{equation}
 with
 \begin{equation}
 \label{eqn:alphabetabounds}
 | \alpha - \alpha_\nu| < 2 \epsilon, \ \  | (\beta - \alpha) -
 (\beta_\nu - \alpha_\nu) | < 2 \epsilon.
 \end{equation}
 Also,
 \begin{equation}
 \label{eqn:Esepnub}
 E( u_\nu^\sharp + u_\nu^b) = E (u_\nu^\sharp) + E (u_\nu^b).
 \end{equation}
 Meanwhile,
 \begin{equation}
 \label{eqn:usplitnub-p}
 \int_M \Bigl(  |u_\nu|^{p+1} - (|u_\nu^\sharp|^{p+1} + |u_\nu^b|^{p+1})
 \Bigr)\, d \Vol \leq \int_{S_\nu} | u_\nu|^{p+1}\, d \Vol < \epsilon,
 \end{equation}
 and, since $\nabla u_\nu^\sharp = \chi_\nu^\sharp \nabla  u_\nu + ( \nabla
 \chi_\nu^\sharp) u_\nu$, with a similar identity for $\nabla u_\nu^b$, we
 have
  \begin{equation}
  \aligned
 \label{eqn:usplitnub-h1}
 &\int_M \left(  |\nabla u_\nu|^{2} - (| \nabla u_\nu^\sharp|^{2} + |\nabla
 u_\nu^b|^2 \right) d \Vol  \\
 &\leq \int_{S_\nu} | \nabla u_\nu|^{2} d \Vol +
 \int_{S_\nu} | u_\nu|^{2} d \Vol < 2 \epsilon.
 \endaligned
 \end{equation}
 Hence, 
 \begin{equation}
 \label{eqn:Esplitbds}
 | E( u_\nu) - E( u_\nu^\sharp + u_\nu^b) | < 3 \epsilon.
 \end{equation}
 These estimates given, in the limit $\nu \to \infty$,
 \begin{equation}
 \label{eqn:EElbeps}
 \EE (\beta, \pi) \geq \EE ( \alpha, \pi) + \EE (\beta - \alpha, \pi) -
 C_1 \epsilon.
 \end{equation}
 This leads to a contradiction of \eqref{eqn:EEubeps} and proves Lemma
 \ref{lem:nosplit}.
 
\end{proof}

Then, we have the following concentration result
\begin{corollary}
\label{cor:conc}
In the setting of Lemma \ref{lem:novanishing}, there is a sequence
$\{ y_\nu \} \subset M$ with the property that for each $\epsilon > 0$,
there exists $R(\epsilon) < \infty$ such that
\begin{equation}
\label{eqn:concbd}
\int_{M \setminus B_{R(\epsilon)}(y_\nu)} | u_\nu|^2\, d \Vol < \epsilon.
\end{equation}
\end{corollary}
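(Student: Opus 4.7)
The plan is to deduce the corollary as an immediate consequence of the concentration--compactness trichotomy (Lemma \ref{lem:conccompman}), applied to the sequence of measures
\[
\mu_\nu(E) = \int_E |u_\nu|^2 \, d\Vol,
\]
each of which, by the normalization $\|u_\nu\|_{L^2}^2 = \beta$ in \eqref{eqn:minseqenmin}, has fixed mass $\beta$. The trichotomy forces exactly one of the three alternatives (vanishing, concentration, splitting) to hold along a subsequence.

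By Lemma \ref{lem:novanishing}, vanishing \eqref{eqn:ccvan} is incompatible with the assumption $\EE(\beta,\pi) < 0$ imposed in \eqref{eqn:EElt0}: it would force $\|u_\nu\|_{L^{p+1}} \to 0$ and hence $\tfrac12\|\nabla u_\nu\|_{L^2}^2 \to \EE(\beta,\pi) < 0$, which is impossible. By Lemma \ref{lem:nosplit}, splitting \eqref{eqn:ccsplit1}--\eqref{eqn:ccsplit2} is ruled out by the strict subadditivity relation \eqref{eqn:subadd} established in Lemma \ref{lem:subadd}. Therefore, along a subsequence, the concentration alternative \eqref{eqn:ccconc} must hold: there exists $\{y_\nu\} \subset M$ and, for every $\epsilon > 0$, a radius $R(\epsilon) < \infty$ such that
\[
\mu_\nu\bigl(B_{R(\epsilon)}(y_\nu)\bigr) > \beta - \epsilon.
\]

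Since $\mu_\nu(M) = \beta$, this is equivalent to
\[
\int_{M \setminus B_{R(\epsilon)}(y_\nu)} |u_\nu|^2 \, d\Vol = \mu_\nu(M) - \mu_\nu\bigl(B_{R(\epsilon)}(y_\nu)\bigr) < \epsilon,
\]
which is precisely \eqref{eqn:concbd}. The whole content of the corollary is thus packaged into the two previous lemmas; there is no additional obstacle to resolve here, since the hard work (ruling out vanishing by exploiting $\EE(\beta,\pi)<0$, and ruling out splitting by the strict subadditivity of $\eta \mapsto \EE(\eta,\pi)$) has already been carried out. The only mild notational point is that one should record that the subsequence extracted by the trichotomy is compatible with the subsequence already extracted in \eqref{eqn:weakstarconv_enmin}, which is routine by a diagonal argument.
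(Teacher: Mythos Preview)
Your proof is correct and matches the paper's approach: the corollary is stated immediately after Lemma \ref{lem:nosplit} without an explicit proof, precisely because it follows at once from the trichotomy (Lemma \ref{lem:conccompman}) applied to $\mu_\nu(E)=\int_E|u_\nu|^2\,d\Vol$, together with Lemmas \ref{lem:novanishing} and \ref{lem:nosplit} ruling out the other two alternatives. Your remark about passing to a further subsequence compatible with \eqref{eqn:weakstarconv_enmin} is a fair technical point, and indeed the conclusion is only asserted along a subsequence (which suffices for the downstream use in Proposition \ref{prop:enminconc}).
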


Again, we emphasize that $\{ y_\nu \}$ is independent of $\epsilon$, and
$R(\epsilon)$ is independent of $\nu$.

To proceed from concentration to
compactness, we use the symmetry condition $u_\nu \in H^1_\pi (M)
\Rightarrow | u_\nu | \in H^1_r (M)$, and we bring in the following hypothesis:
\begin{equation}
\label{eqn:diamgrowth}
\diam S_\rho \to \infty \ \text{as} \ | \rho | \to \infty,
\end{equation}
where $S_\rho$ denotes the set of points $x \in M$ of the form $x = (\rho,
\omega)$, given $M$ of the form \eqref{eqn:M1}--\eqref{eqn:M3}.  (We will say
$r(x) = \rho$).  Note that \eqref{eqn:diamgrowth} follows from
\eqref{eqn:Aasymp}, but not necessarily from \eqref{eqn:intvolel}
(though it might follow from \eqref{eqn:intvolel}
together with \eqref{eqn:Mbdgeom}).

\begin{lemma}
\label{lem:rbounded}
In the setting of Corollary \ref{cor:conc}, add hypothesis
\eqref{eqn:diamgrowth}.  Then there exists $R_0 < \infty$ such that
\begin{equation}
\label{eqn:rbounded}
| r( y_\nu)| \leq R_0, \ \forall \nu.
\end{equation}
\end{lemma}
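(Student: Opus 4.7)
The plan is to exploit the $G$-symmetry of the density $|u_\nu|^2\,d\Vol$ together with the concentration inequality from Corollary \ref{cor:conc} to rule out large values of $|r(y_\nu)|$. Since $u_\nu \in H^1_\pi(M)$ implies $|u_\nu|$ is $G$-invariant (i.e.\ radial in the sense of depending only on $r$), the measure $\mu_\nu = |u_\nu|^2\, d\Vol$ is $G$-invariant. In particular, because $G$ acts by isometries, for every $g \in G$ and every $R>0$,
\[
\mu_\nu(B_R(g \cdot y_\nu)) = \mu_\nu(g \cdot B_R(y_\nu)) = \mu_\nu(B_R(y_\nu)).
\]

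I would apply Corollary \ref{cor:conc} with $\epsilon = \beta/4$ to obtain an $R_1 = R(\beta/4) < \infty$ such that, for all $\nu$,
\[
\mu_\nu\bigl(B_{R_1}(y_\nu)\bigr) > \beta - \frac{\beta}{4} = \frac{3\beta}{4}.
\]
By the $G$-invariance just noted, the same lower bound holds for $\mu_\nu\bigl(B_{R_1}(g \cdot y_\nu)\bigr)$, for every $g \in G$. Suppose, for the sake of contradiction, that $|r(y_\nu)| \to \infty$ along some subsequence. By hypothesis \eqref{eqn:diamgrowth}, $\diam S_{r(y_\nu)} \to \infty$ along this subsequence, so we may select (passing to a further subsequence if needed) elements $g_\nu \in G$ with
\[
\dist(y_\nu, g_\nu \cdot y_\nu) > 2R_1,
\]
noting that $g_\nu \cdot y_\nu$ sweeps out the orbit $S_{r(y_\nu)}$ as $g_\nu$ varies. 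Then $B_{R_1}(y_\nu)$ and $B_{R_1}(g_\nu \cdot y_\nu)$ are disjoint, so
\[
\beta = \mu_\nu(M) \geq \mu_\nu\bigl(B_{R_1}(y_\nu)\bigr) + \mu_\nu\bigl(B_{R_1}(g_\nu \cdot y_\nu)\bigr) > \frac{3\beta}{2},
\]
a contradiction. Therefore $|r(y_\nu)|$ must be bounded, and by \eqref{eqn:diamgrowth} again we can extract a uniform bound $R_0$ valid for every $\nu$.

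The main thing to verify carefully is that the orbit $S_{r(y_\nu)} = G \cdot y_\nu$ indeed has diameter equal to (or at least comparable to) the quantity appearing in \eqref{eqn:diamgrowth}, so that for $|r(y_\nu)|$ large we can genuinely find a group element $g_\nu$ pushing $y_\nu$ more than $2R_1$ away. This follows from the assumption that $G$ acts transitively on $\SS^{n-1}$ (hence transitively on each $S_\rho$ under the metric \eqref{eqn:metric1}), combined with continuity of the distance function and compactness of $G$: the maximum of $\dist(y_\nu, g \cdot y_\nu)$ over $g \in G$ is attained, and equals $\diam S_{r(y_\nu)}$. Beyond that, the argument is essentially a symmetry–pigeonhole argument, with no analytic obstacles once concentration is in hand.
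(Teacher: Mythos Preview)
Your proof is correct and follows essentially the same approach as the paper: both use the $G$-invariance of $|u_\nu|^2$ to show that a ball around $y_\nu$ and its $G$-translate each contain more than half the total mass, then invoke \eqref{eqn:diamgrowth} to make those balls disjoint when $|r(y_\nu)|$ is large, yielding a contradiction. The only cosmetic difference is that the paper fixes $R_0$ directly via the condition $\diam S_\rho > 2R(\epsilon_0)$ rather than arguing by contradiction along a subsequence.
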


\begin{proof}
Take $\epsilon_0 < \beta/2$ and then take $R_0$ so large that
\begin{equation}
\label{eqn:diamlb}
\rho \geq R_0 \Rightarrow \diam S_\rho > 2 R (\epsilon_0).
\end{equation}
If $| r( y_\nu)| > R_0$, take $g \in G$ such that $y_\nu' = g \cdot y_\nu$
satisfies $\dist (y_\nu ' ) > 2 R( \epsilon_0)$, so
\begin{equation}
\label{eqn:disjoint}
B_{R (\epsilon_0)} ( y_\nu ' ) \cap B_{R (\epsilon_0)} ( y_\nu) = \emptyset .
\end{equation}
Now, the identity
\begin{equation}
\label{eqn:L2equal}
\int_{B_{R (\epsilon_0)} (y_\nu)} | u_\nu|^2\, d \Vol = \int_{B_{R (\epsilon_0)}
(y_\nu ')} | u_\nu|^2\, d \Vol
\end{equation}
contradicts \eqref{eqn:concbd}.   Thus, \eqref{eqn:diamlb} must hold.
\end{proof}

\begin{corollary}
\label{cor:conc1}
In the setting of Lemma \ref{lem:rbounded}, for each $\epsilon > 0$ there
exists a compact $K (\epsilon) \subset M$ such that
\begin{equation}
\int_{M \setminus K(\epsilon)} | u_\nu |^2\, d \Vol < \epsilon, \quad
\forall\, \nu.
\end{equation}
\end{corollary}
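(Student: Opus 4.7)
The plan is to combine Corollary~\ref{cor:conc} with Lemma~\ref{lem:rbounded} by replacing the moving balls $B_{R(\epsilon)}(y_\nu)$ with a single fixed compact set. The point is that while the $y_\nu$ may drift arbitrarily in the angular directions, their radial coordinates $r(y_\nu)$ lie in a bounded set, and this alone is enough to trap the balls inside a ``radial slab'' of $M$.

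Concretely, given $\epsilon>0$, I would take $R(\epsilon)$ furnished by Corollary~\ref{cor:conc} and $R_0$ furnished by Lemma~\ref{lem:rbounded}, and then set
\[
K(\epsilon) = \{ x \in M \ : \ |r(x)| \leq R_0 + R(\epsilon) \}.
\]
Because the metric \eqref{eqn:metric1} has the form $g = dr^2 + h(r)$, the coordinate $r$ is arc length along geodesics orthogonal to the $G$-orbits, hence $1$-Lipschitz with respect to the Riemannian distance: $|r(x) - r(y)| \leq \dist(x,y)$ for all $x,y \in M$. In any of the three cases \eqref{eqn:M1}--\eqref{eqn:M3}, the slab $K(\epsilon)$ is therefore a closed, $r$-bounded subset contained in a compact product $\overline{I_{R_0+R(\epsilon)}} \times \SS^{n-1}$ (with the point $o$ adjoined in case \eqref{eqn:M1}); combined with the bounded geometry hypothesis \eqref{eqn:Mbdgeom}, $K(\epsilon)$ is compact.

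The containment $B_{R(\epsilon)}(y_\nu) \subset K(\epsilon)$ is then immediate from the $1$-Lipschitz property of $r$ and the bound $|r(y_\nu)| \leq R_0$: for $x \in B_{R(\epsilon)}(y_\nu)$,
\[
|r(x)| \leq |r(y_\nu)| + |r(x) - r(y_\nu)| \leq R_0 + R(\epsilon).
\]
Consequently $M \setminus K(\epsilon) \subset M \setminus B_{R(\epsilon)}(y_\nu)$, and \eqref{eqn:concbd} yields $\int_{M \setminus K(\epsilon)} |u_\nu|^2 \, d\Vol < \epsilon$ uniformly in $\nu$.

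The only step requiring any care is the verification that $K(\epsilon)$ is genuinely compact in each of the three topological cases for $M$; there is no real obstacle, since the orbits $\SS^{n-1}$ are compact and the radial coordinate ranges over a bounded closed subinterval of $I$, but one should handle the boundary/endpoint cases in \eqref{eqn:M1}--\eqref{eqn:M3} separately.
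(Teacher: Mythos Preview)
Your proof is correct and is precisely the argument the paper leaves implicit: the corollary is stated without proof as an immediate consequence of Corollary~\ref{cor:conc} and Lemma~\ref{lem:rbounded}, and your radial-slab construction $K(\epsilon)=\{|r|\le R_0+R(\epsilon)\}$ is the natural way to make that explicit. One small remark: the appeal to the bounded-geometry hypothesis \eqref{eqn:Mbdgeom} is unnecessary for the compactness of $K(\epsilon)$, which follows directly from the product description $M=I\times\SS^{n-1}$ with metric \eqref{eqn:metric1}.
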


Now, we can state the main result of this section.

\begin{proposition}
\label{prop:enminconc}
Under the hypotheses of Lemma \ref{lem:rbounded}, the sequence $\{ u_\nu \}
\subset H^1_\pi (M)$ in \eqref{eqn:minseqenmin} has a subsequence that
converges in $H^1_\pi$ norm to $u \in H^1_\pi (M)$ satisfying $Q(u) = \beta$
and $E(u) = \EE ( \beta, \pi)$.
\end{proposition}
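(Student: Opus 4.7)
My plan is to observe that essentially all the work for this proposition has already been done in the preceding lemmas; the remaining task is to combine the tightness result of Corollary \ref{cor:conc1} with Rellich compactness to upgrade the weak $H^1$ convergence to strong $L^2$ convergence (i.e., to establish \eqref{eqn:2strconv_enmin}), after which the string of implications already presented between \eqref{eqn:2strconv_enmin} and \eqref{eqn:H1stconv} in the main text closes the argument.

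First, I would pass to a subsequence of $\{u_\nu\}$ for which \eqref{eqn:weakstarconv_enmin} and the conclusions of Corollary \ref{cor:conc1} both hold, with weak limit $u \in H^1_\pi(M)$. Given $\epsilon > 0$, I would take the compact set $K(\epsilon) \subset M$ provided by Corollary \ref{cor:conc1}, so that
\[
\int_{M \setminus K(\epsilon)} |u_\nu|^2\, d\Vol < \epsilon, \quad \forall\, \nu.
\]
By lower semicontinuity under weak $L^2$ convergence, the same bound (with $\epsilon$ replaced by $\epsilon$) holds for $u$ as well. On the relatively compact set $K(\epsilon)$, the Rellich embedding \eqref{eqn:compemb} at $q=2$ gives $u_\nu \to u$ in $L^2(K(\epsilon))$. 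Combining the interior strong convergence with the uniform tail estimate yields $\|u_\nu - u\|_{L^2(M)} \to 0$, which is exactly \eqref{eqn:2strconv_enmin}.

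With \eqref{eqn:2strconv_enmin} in hand, I would invoke the argument already spelled out in the main text: strong $L^2$ convergence gives $Q(u) = \beta$, so $u$ is admissible in \eqref{eqn:calE} and hence $E(u) \geq \EE(\beta,\pi)$; on the other hand, weak lower semicontinuity \eqref{eqn:nablabd_enmin} combined with strong $L^{p+1}$ convergence \eqref{eqn:strconv_enmin} gives $E(u) \leq \liminf E(u_\nu) = \EE(\beta,\pi)$, so equality holds as in \eqref{eqn:Eequal}. This in turn forces $\|\nabla u_\nu\|_{L^2} \to \|\nabla u\|_{L^2}$, which together with weak convergence in $H^1_\pi(M)$ upgrades to norm convergence \eqref{eqn:H1stconv}, finishing the proof.

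The only delicate point is really the tightness-plus-Rellich step, and even there the work has been done: Lemmas \ref{lem:novanishing} and \ref{lem:nosplit} eliminated the two bad alternatives of Lemma \ref{lem:conccompman}, Lemma \ref{lem:rbounded} used the diameter growth hypothesis \eqref{eqn:diamgrowth} to pin the concentration centers $y_\nu$ in a bounded $r$-interval, and Corollary \ref{cor:conc1} converted this into tightness over an honestly compact subset of $M$. Once tightness is available, the Rellich theorem on $K(\epsilon)$ is a standard tool, and the remainder is bookkeeping; I do not expect any essential obstacle beyond what has already been overcome.
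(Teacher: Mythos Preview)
Your proposal is correct and follows essentially the same approach as the paper: the paper's proof simply notes that it suffices to establish $L^2$-norm convergence \eqref{eqn:2strconv_enmin}, and that this follows from Corollary \ref{cor:conc1} together with the already-established weak convergence. You have supplied a bit more detail on the tightness-plus-Rellich step than the paper does, but the argument is the same.
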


\begin{proof}
It suffices at this point to show that $(u_\nu)$ has a subsequence (continued
to be called $(u_\nu)$) that converges in $L^2$-norm.  In view of
\eqref{eqn:weakstarconv_enmin}-\eqref{eqn:qstrconv_enmin}, the desired
$L^2$ convergence is a simple consequence of Corollary \ref{cor:conc1}.
\end{proof}

Such a minimizer satisfies the PDE \eqref{eqn:nlbs}.  In fact, complementing
\eqref{eqn:nlbs}, we have for $u \in H^1_\pi (M)$,
\begin{align}
\label{eqn:dE}
\frac{d}{d \tau} E (u + \tau \psi) \Bigr|_{\tau = 0}  & =
\Re ( - \Delta u - |u|^{p-1} u, \psi), \\
\label{eqn:dQ}
\frac{d}{d \tau} Q (u + \tau \psi) \Bigr|_{\tau = 0} & = 2 \Re  (u, \psi) .
\end{align}
Hence, given a constrained minimizer $u \in H^1_\pi (M)$ produced by
Proposition \ref{prop:enminconc}, then
\begin{equation}
\label{eqn:EL1}
\psi \in H^1_\pi (M), \ \Re (u, \psi) = 0 \Rightarrow \Re ( \Delta u +
| u |^{p-1} u , \psi) = 0,
\end{equation}
and it follows that there exists $\lambda \in \RR$ such that $ \Delta u +
| u |^{p-1} u =  \lambda u$, or equivalently
\begin{equation}
\label{eqn:nlellip}
-\Delta u + \lambda u = | u |^{p-1} u.
\end{equation}

\subsection{Weinstein Functional Maximizers}
\label{sec:vortspherWmax}

Here we take $M = \RR^n$ and $G \subset SO (n)$ a group acting transitively
on the unit sphere $\SS^{n-1}$.  Given a unitary representation $\pi$ of $G$
on $V$, we define $H^1_\pi (\RR^n)$ as in \eqref{eqn:Hspi}.  We seek to
maximize
\begin{equation}
\label{eqn:WW}
W(u) = \frac{ \| u \|_{L^{p+1}}^{p+1} }{ \| u \|_{L^2}^\alpha \| \nabla u
\|_{L^2}^\beta},
\end{equation}
over non-zero $u \in H^1_\pi (\RR^n)$, given
\begin{equation}
\label{eqn:alphabetaW}
\alpha = 2- \frac{(n-2)(p-1)}{2}, \quad \beta = \frac{n (p-1)}{2},
\end{equation}
assuming $p$ satisfies
\begin{equation}
\label{eqn:pboundsW}
1 < p < \frac{n+2}{n-2}.
\end{equation}
Note that in such a case,
\begin{equation}
\label{eqn:alphabetapW}
\alpha, \beta > 0, \ \ \alpha + \beta = p+1.
\end{equation}
The supremum
\begin{equation}
\label{eqn:Wpi}
\mathcal{W}  ( \pi) = \sup \{ W(u) \ : \ 0 \neq u \in H^1_\pi ( \RR^n) \}
\end{equation}
is the best constant in the Gagliardo-Nirenberg inequality
\begin{equation}
\label{eqn:gagnirpiW}
\| u \|_{L^{p+1}}^{p+1} \leq \mathcal{W} (\pi) \| u \|_{L^2}^\alpha \| \nabla
u \|_{L^2}^\beta, \ \ u \in H^1_\pi (\RR^n).
\end{equation}
That there exists $\tilde{\mathcal{W}} < \infty$
such that $\| u \|_{L^{p+1}}^{p+1}
\leq \tilde{\mathcal{W}} \| u \|_{L^2}^\alpha \| \nabla u \|_{L^2}^\beta$
follows from the classical Gagliardo-Nirenberg inequality.  It follows that
\eqref{eqn:Wpi} exists and is finite, for each finite-dimensional unitary
representation $\pi$ of $G$.  We aim to show that such a supremum is obtained.

To proceed, take $u_\nu \in H^1_\pi(\RR^n)$
such that $W( u_\nu) \to \mathcal{W} (\pi)$.
We follow the standard argument in the radial case and use the fact that $W(u)$
is invariant under $u \to a u$ and $u(x) \to u(bx)$ to impose the normalization
\begin{equation}
\label{eqn:L2H1eq1W}
\| u_\nu \|_{L^2} = 1, \ \ \| \nabla u_\nu \|_{L^2} = 1,
\end{equation}
so
\begin{equation}
\label{eqn:LptoW}
\| u_\nu \|_{L^{p+1}} \to \mathcal{W} (\pi)^{{1}/({p+1})}.
\end{equation}
If we pass to a subsequence such that $u_\nu \to u,\ \text{weak}^*$ in
$H^1_\pi (\RR^n)$, results from Section
\ref{sec:vortspherFlambda} yield $u_\nu \to u$ in norm in
$L^{p+1} (\RR^n)$.  Also, $\| u \|_{L^2} \leq 1$ and $\| \nabla u \|_{L^2}
\leq 1$, so
\begin{equation}
\label{eqn:Wlbd}
W(u) \geq \mathcal{W} (\pi).
\end{equation}
This yields $W(u) = \mathcal{W} (\pi)$ (hence $\| u \|_{L^2} = \| \nabla u
\|_{L^2} = 1$, and therefore $u_\nu \to u$ in norm in $H^1_\pi (\RR^n)$).
We have the desired maximizer.
We summarize.

\begin{proposition} \label{p2.3.a}
Given \eqref{eqn:pboundsW}, there exists a nonzero $u\in H^1_\pi(\Rr^n)$
maximizing $W(u)$ in \eqref{eqn:Wpi}, so
\beq
W(u)=\Cal{W}(\pi).
\eeq
\end{proposition}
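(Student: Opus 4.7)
The plan is to apply the direct method of the calculus of variations in the symmetry-constrained space $H^1_\pi(\Rr^n)$, as sketched in the paragraphs preceding the statement. Take a maximizing sequence $(u_\nu)\subset H^1_\pi(\Rr^n)$ with $W(u_\nu)\to\Cal{W}(\pi)$. The functional $W$ has two continuous symmetries: $u\mapsto au$ (scale-invariant because $\alpha+\beta=p+1$) and the dilation $u\mapsto u(b\,\cdot)$ (scale-invariant because of the particular choice \eqref{eqn:alphabetaW} of $\alpha,\beta$). Both operations preserve $H^1_\pi(\Rr^n)$: scalar multiplication commutes with $\pi$ trivially, and dilation commutes with the $G$-action on $\Rr^n$ since $G\subset SO(n)$. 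Choosing $a_\nu,b_\nu>0$ I renormalize so that, still writing $u_\nu$ for the rescaled sequence,
\beq
\|u_\nu\|_{L^2}=1,\qquad\|\nabla u_\nu\|_{L^2}=1,\qquad \|u_\nu\|_{L^{p+1}}^{p+1}\to\Cal{W}(\pi).
\eeq

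The sequence is then bounded in $H^1_\pi(\Rr^n)$, so I extract a weakly convergent subsequence $u_\nu\to u$ in $H^1_\pi(\Rr^n)$; weak lower semicontinuity gives $\|u\|_{L^2},\|\nabla u\|_{L^2}\le 1$. The crucial step is to promote this to strong $L^{p+1}$ convergence, and this is exactly what Section \ref{sec:vortspherFlambda} delivers. For $M=\Rr^n$ we have $A(r)=A_n r^{n-1}$, so \eqref{eqn:intvolel} holds for $n\ge 3$ and \eqref{eqn:Aasymp} holds for every $n\ge 2$; thus Lemma \ref{lem:intvolel} applies to $w=|u_\nu|\in H^1_r(\Rr^n)$ via \eqref{eqn:kineticpi}. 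The decay bound \eqref{eqn:uqbd}, together with local Rellich compactness on bounded sets, yields the uniform $L^{p+1}$ tail estimate that forces $u_\nu\to u$ in $L^{p+1}(\Rr^n)$, exactly as in the passage from \eqref{eqn:minseq1} to \eqref{eqn:stpconv}. In particular $\|u\|_{L^{p+1}}^{p+1}=\Cal{W}(\pi)>0$, so $u\not\equiv 0$.

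Combining the bounds gives $W(u)\ge\Cal{W}(\pi)$, and the opposite inequality is the definition of the supremum, so $W(u)=\Cal{W}(\pi)$. This equality forces $\|u\|_{L^2}=\|\nabla u\|_{L^2}=1$, which together with weak convergence upgrades to strong $H^1_\pi$ convergence. The only step with genuine content is the $L^{p+1}$ compactness in the second paragraph; the obstacle one has to rule out in producing a maximizer on all of $\Rr^n$ is mass escaping to infinity, and it is precisely the $\pi$-equivariance of $u_\nu$, combined with the radial decay estimate for $|u_\nu|$ from Lemma \ref{lem:intvolel}, that prevents this. Without those two ingredients one would be forced to invoke the full concentration-compactness apparatus used in Section \ref{sec:vortspherenmin}; here the symmetry reduces the problem to a straightforward weak-to-strong argument.
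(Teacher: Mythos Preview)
Your proof is correct and follows essentially the same approach as the paper: normalize a maximizing sequence via the two scaling symmetries, extract a weak limit in $H^1_\pi(\Rr^n)$, invoke the $L^{p+1}$ compactness from Section~\ref{sec:vortspherFlambda} (Lemma~\ref{lem:intvolel} plus the tail estimate \eqref{eqn:uqbd}), and conclude by weak lower semicontinuity of the denominator. Your writeup in fact supplies more detail than the paper's own sketch, in particular the explicit verification that $A(r)=A_n r^{n-1}$ satisfies the hypotheses of Lemma~\ref{lem:intvolel} and that the rescalings preserve $H^1_\pi(\Rr^n)$.
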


A computation of
\begin{equation}
\label{eqn:dWdtau}
\frac{d}{d \tau} W(u + \tau v) \Bigr|_{\tau = 0} = \frac{  (N(u), v)}{ \| u
\|_{L^2}^{2 \alpha} \| \nabla u \|_{L^2}^{2 \beta} }
\end{equation}
shows that such a maximizer $u$ solves the equation
\begin{equation}
\label{eqn:ELW}
\Delta u - \lambda u + K | u|^{p-1} u = 0,
\end{equation}
with
\begin{equation}
\label{eqn:lamKdefW}
\lambda = \frac{\alpha}{ \beta}\, \frac{\|\nabla u\|_{L^2}^2}{\|u\|_{L^2}^2},
\quad K = \frac{p+1}{\beta}\,
\frac{\|\nabla u\|^2_{L^2}}{\|u\|^{p+1}_{L^{p+1}}},
\end{equation}
hence, with the normalization \eqref{eqn:L2H1eq1W}, 
\begin{equation}
\label{eqn:lamKdefW1}
\lambda = \frac{\alpha}{ \beta}, \ \ K = \frac{p+1}{\beta \mathcal{W} (\pi)}.
\end{equation}

We next examine the dependence of $\mathcal{W} (\pi)$ on the representation
$\pi$ of $G$ on $V$, and establish a certain monotonicity property.  Let us
assume that
\begin{equation}
\label{eqn:piirred}
\pi \ \text{is irreducible and} \ \dim V_0 =1.
\end{equation}
See Appendices \ref{app:H1pineq0}--\ref{app:4d} for results on when
\eqref{eqn:piirred} holds.  In such a case, pick a unit spanning vector
$\pi \in V_0$.  Then
\begin{equation}
\label{eqn:urgp0W}
u(r g \cdot p_0) = \psi (r) \pi (g) \phi, \ \ \psi: \RR^+ \to \CC.
\end{equation}
We have 
\begin{equation}
\label{eqn:u0W}
u_0 (x) = \phi (r) , \ r=|x| \Longrightarrow u_0 \in H^1_r (\RR^n),
\end{equation}
and
\begin{equation}
\label{eqn:uL2Lp}
\| u \|_{L^2} = \| u_0 \|_{L^2}, \ \ \| u \|_{L^{p+1}} = \| u_0 \|_{L^{p+1}}.
\end{equation}
To compare $\| \nabla u \|_{L^2}$ with $\| \nabla u_0 \|_{L^2}$, note that
\begin{equation}
\label{eqn:kineticW}
\| \nabla u_0 \|_{L^2}^2 = - ( \Delta u, u)
\end{equation}
and 
\begin{equation}
\label{eqn:radLapW}
\Delta u = \pa_r^2 u + \frac{n-1}{r} \pa_r u + \frac{1}{r^2} \Delta_{\SS} u.
\end{equation}
We preview a result that will be discussed in more
detail in Section \ref{sec:vortspherODE},
namely that under the hypotheses \eqref{eqn:piirred}--\eqref{eqn:urgp0W},
\begin{equation}
\label{eqn:sphereig}
\Delta_{\SS} u = -\mu_\pi^2 u, \ \ \mu_\pi^2 \in \RR^+.
\end{equation}
Hence, 
\beq
\aligned
\| \nabla u \|_{L^2}^2 & = - (\Delta u, u)  \\
& = \| \nabla u_0 \|_{L^2}^2 + \mu_\pi^2 \left\| \frac{u_0}{|x|}
\right\|_{L^2}^2.
\label{eqn:kinsphdecomp}
\endaligned
\eeq
It follows that 
\beq
\aligned
\label{eqn:Winv}
\frac{1}{\mathcal{W} (\pi) } & = \inf \left\{ \frac{ \| u \|_{L^2}^\alpha \|
\nabla u \|_{L^2}^\beta}{ \| u \|_{L^{p+1}}^{p+1} } \ : \ 0 \neq u \in
H^1_\pi (\RR^n) \right\}  \\
& = \inf \left\{ \frac{  \| u_0 \|_{L^2}^\alpha ( \| \nabla u_0 \|_{L^2}^2 +
\mu_\pi^2 \| u_0 / |x| \|_{L^2}^2 )^{\beta/2} }{ \| u_0 \|_{L^{p+1}}^{p+1}}
\ : \ u_0 \in H^1_r (\RR^n) \right\} .
\endaligned
\eeq
The right side of \eqref{eqn:Winv} is clearly a monotone increasing function
of $\mu_\pi^2$.  We have the following conclusion.

\begin{proposition}
\label{prop:Wmax}
Given $p$ as in \eqref{eqn:pboundsW}, $\mathcal{W} (\pi)$ as in
\eqref{eqn:Wpi}, if $\pi_1$ and $\pi_2$ are two representations of $G$
satisfying \eqref{eqn:piirred}, then
\begin{equation}
\label{eqn:monpi}
\mu_{\pi_2}^2 > \mu_{\pi_1}^2 \Longrightarrow
\mathcal{W} (\pi_2) < \mathcal{W} (\pi_1).
\end{equation}
\end{proposition}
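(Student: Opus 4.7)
The strategy is to exploit the variational formula \eqref{eqn:Winv}, which under \eqref{eqn:piirred} expresses $1/\mathcal{W}(\pi)$ as the infimum over $H^1_r(\mathbb{R}^n)$ of a single functional whose only $\pi$-dependence enters through the parameter $\mu_\pi^2$, as the coefficient of $\|u_0/|x|\|_{L^2}^2$. Because $\beta > 0$, this functional is monotone increasing in $\mu_\pi^2$, and strictly so at every $u_0$ with $\|u_0/|x|\|_{L^2}^2 > 0$. The non-strict monotonicity of the infimum is immediate; the substance is in upgrading to strict monotonicity.

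To do so I would produce a single test function at which both sides are finite, the $\pi_2$-side is attained, and the Hardy term is nonzero. Applying Proposition \ref{p2.3.a} to $\pi_2$ furnishes a nonzero maximizer $u^* \in H^1_{\pi_2}(\mathbb{R}^n)$ of $W$. Writing $u^*(r g \cdot p_0) = \psi^*(r)\,\pi_2(g)\phi$ and setting $u_0^*(x) := \psi^*(|x|) \in H^1_r(\mathbb{R}^n)$, the kinetic decomposition \eqref{eqn:kinsphdecomp} (which rests on the eigenvalue property \eqref{eqn:sphereig}) combined with $\|\nabla u^*\|_{L^2} < \infty$ forces $u_0^*/|x| \in L^2(\mathbb{R}^n)$; and since $u^* \not\equiv 0$, the radial profile $u_0^*$ is nontrivial, whence $\|u_0^*/|x|\|_{L^2}^2 > 0$. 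The same identity shows that $u_0^*$ realizes the infimum in \eqref{eqn:Winv} for $\pi = \pi_2$.

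Let $F_\pi(u_0)$ denote the quotient on the right side of \eqref{eqn:Winv}. Evaluating at $u_0 = u_0^*$ and using $\mu_{\pi_1}^2 < \mu_{\pi_2}^2$, $\beta > 0$, and $\|u_0^*/|x|\|_{L^2}^2 > 0$, we have $F_{\pi_1}(u_0^*) < F_{\pi_2}(u_0^*)$. Combining with the infimum bound $1/\mathcal{W}(\pi_1) \leq F_{\pi_1}(u_0^*)$ gives
$$\frac{1}{\mathcal{W}(\pi_1)} \leq F_{\pi_1}(u_0^*) < F_{\pi_2}(u_0^*) = \frac{1}{\mathcal{W}(\pi_2)},$$
and inversion yields $\mathcal{W}(\pi_2) < \mathcal{W}(\pi_1)$. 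The only delicate point is ensuring the positivity of the Hardy term $\|u_0^*/|x|\|_{L^2}^2$, which is automatic from $u_0^* \not\equiv 0$ together with the finiteness of $\|\nabla u^*\|_{L^2}^2$ that is already built into $u^* \in H^1_{\pi_2}(\mathbb{R}^n)$.
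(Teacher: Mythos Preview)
Your proof is correct and follows the same route as the paper: both rest on the variational identity \eqref{eqn:Winv} and the monotone dependence of the infimand on $\mu_\pi^2$. The paper simply asserts that the right side of \eqref{eqn:Winv} is ``clearly a monotone increasing function of $\mu_\pi^2$'' and stops there; you supply the extra detail needed for strictness by invoking the maximizer from Proposition~\ref{p2.3.a} for $\pi_2$, extracting its radial profile $u_0^*$, and checking that its Hardy term is finite and positive---a clean way to fill in what the paper leaves implicit.
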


This extends Corollary $16$ of \cite{FG}, which deals with $2D$ vortices.
We note parenthetically the $\pi$-dependence in
\beq
\aligned
\mathcal{I}(\beta,\pi)
&=\inf \left\{F_\lambda(u):u\in H^1_\pi(\RR^n),\ J_p(u)=\beta\right\} \\
&=\inf\Bigl\{F_\lambda(u_0)+\mu_\pi^2\Bigl\|\frac{u_0}{|x|}\Bigr\|^2_{L^2}:
u_0\in H^1_r(\RR^n),\ J_p(u_0)=\beta\Bigr\},
\endaligned
\label{W.24}
\eeq
and
\beq
\aligned
\mathcal{E}(\beta,\pi)
&=\inf \left\{ E(u):u\in H^1_\pi(\RR^n),\ \|u\|^2_{L^2}=\beta\right\} \\
&=\inf\Bigl\{E(u_0)+\frac{\mu_\pi^2}{2}\Bigl\|\frac{u_0}{|x|}\Bigr\|^2_{L^2}:
u_0\in H^1_r(\RR^n),\ \|u_0\|^2_{L^2}=\beta\Bigr\}.
\endaligned
\label{W.25}
\eeq
It follows that, for fixed $\beta, \lambda$, and appropriate bounds on $p$,
\beq
\mu_{\pi_2}^2>\mu_{\pi_1}^2\Longrightarrow \mathcal{I}(\beta,\pi_2)>
\mathcal{I}(\beta,\pi_1)\ \text{and}\ \mathcal{E}(\beta,\pi_2)>
\mathcal{E}(\beta,\pi_1).
\label{W.26}
\eeq
For more on this, in a more general context, see Section \ref{s4.2}.

\subsection{ODE for vortex standing waves}
\label{sec:vortspherODE}

We start with
\begin{equation}
\label{eqn:MEuc}
M = \RR^n, \ \text{with the standard flat metric},
\end{equation}
and as in \eqref{eqn:Hspi}, take
\begin{equation}
\label{eqn:H1pi2}
H^1_\pi(M) = \{ u \in H^1 (M, V) \ : \ u(g x) = \pi (g) u(x), \
\forall\, x \in M,\, g \in G \},
\end{equation}
where $G \subset SO (n)$ is a Lie group acting transitively on $\SS^{n-1}$ as
a group of isometries and $\pi$ is a unitary representation of $G$ on a
finite-dimensional inner-product space $V$.  Later in this subsection we
consider more general $M$.  The Laplace operator on $\RR^n$ has the form
\begin{equation}
\label{eqn:radLapODE}
\Delta u = \pa_r^2 u + \frac{n-1}{r} \pa_r u + \frac{1}{r^2} \Delta_{\SS} u, 
\end{equation}
where $\Delta_\SS$ is the Laplace-Beltrami operator on the sphere $\SS^{n-1}$,
with its standard metric.  Thus the PDE
\begin{equation}
\label{eqn:ellipODE}
-\Delta u + \lambda u - |u|^{p-1} u = 0,
\end{equation}
solved by an $F_\lambda$-minimizer, an energy minimizer, or a Weinstein
functional maximizer, takes the form
\begin{equation}
\label{eqn:nlbsODE}
\pa_r^2 u + \frac{n-1}{r} \pa_r u + \frac{1}{r^2} \Delta_{\SS} u - \lambda u
+ |u|^{p-1} u = 0.
\end{equation}
We want to reduce this to an ODE and make some observations.

We proceed as follows.  An element $u \in H^1_\pi (M)$ is uniquely specified
as
\begin{equation}
\label{eqn:urgp0ODE}
u(r, g \cdot p_0) = \pi (g) v(r),
\end{equation}
with
\begin{equation}
\label{eqn:vdefODE}
v: \RR^+ \to V_0 = \{ \phi \in V \ : \ \pi (k) \phi = \phi, \ \forall k \in
K \},
\end{equation}
where $p_0 \in \SS^{n-1}$ is chosen, and $K$ is the subgroup of $G$ fixing
$p_0$.  Let us assume
\begin{equation}
\label{eqn:piirredODE}
\pi \ \text{acts irreducibly on} \ V, \ \text{and} \ V_0 \neq 0.
\end{equation}
We know that (up to unitary equivalence), $V$ is a linear subspace of
$L^2 (\SS^{n-1})$ and $\pi$ acts as the regular representation
\begin{equation}
\label{eqn:regrepODE}
R(g) f(x) = f(g^{-1} x),\quad g \in G.
\end{equation}
In particular, $\pi$ commutes with the action of the Laplace-Beltrami
operator $\Delta_{\SS}$, so
\begin{equation}
\label{eqn:SSspec}
f \in V \Longrightarrow \Delta_{\SS} f = -\mu_\pi^2 f,
\quad \mu_\pi^2 \in \RR^+.
\end{equation}
Thus, \eqref{eqn:radLapODE} yields the ODE
\begin{equation}
\label{eqn:vortexODE}
v'' (r) + \frac{n-1}{r} v' (r) - \frac{ \mu_\pi^2}{r^2} v(r) + | v(r) |^{p-1}
v(r) = 0.
\end{equation}
This is a second order, nonlinear, $k \times k$ system, with $k = \dim V_0$.  

Let us now assume
\begin{equation}
\label{eqn:dimV0ODE}
\dim V_0 = 1,
\end{equation}
which is frequently (but not always) the case.  Then, \eqref{eqn:vortexODE}
is a scalar equation.  That is, we can pick a unit vector $\phi \in V_0$,
spanning $V_0$, and write
\begin{equation}
\label{eqn:vphi}
v(r) = \psi(r) \phi, \quad \psi: \RR^+ \to \CC,
\end{equation}
and \eqref{eqn:vortexODE} becomes
\begin{equation}
\label{eqn:psiODE}
\psi''(r) + \frac{n-1}{r} \psi' - \frac{ \mu_\pi^2}{r^2} \psi - \lambda \psi
+ | \psi |^{p-1} \psi = 0.
\end{equation}
Note that if $\psi (r)$ satisfies \eqref{eqn:psiODE} and $\theta \in \RR$,
then $e^{i \theta} \psi (r)$ also solves \eqref{eqn:psiODE}.  Hence,
$\psi (1)$ can be taken to be real.  If $\psi'(1)$ is also real, then the
solution is real by ODE uniqueness theory.  If $\psi' (1)$ is not real
(and $\psi (0) \in \RR \setminus \{ 0 \}$) then the solution cannot be so
modified to be real.  We show that if $u$ solves \eqref{eqn:ellipODE} by
virtue of being a $F_\lambda$-minimizer (or an energy minimizer or a
Weinstein functional maximizer) then $\psi$ can be arranged to be real.
To see this, use \eqref{eqn:urgp0ODE} and \eqref{eqn:vphi}, i.e.,
\begin{equation}
\label{eqn:upsipiODE}
u( r, g \cdot p_0) = \psi (r) \pi (g) \phi,
\end{equation}
to define the map $u \to u^\sharp$ on $H^1_\pi (M)$:
\begin{equation}
\label{eqn:sharpmap}
u^\sharp  (r, g \cdot p_0) = | \psi (r) | \pi (g) \phi.
\end{equation}
It is apparent that, for a.e. $x \in M$,
\begin{equation}
\label{eqn:sharpkin}
| u^\sharp (x) | = |u(x)|, \quad
| \nabla u^\sharp (x) | \leq | \nabla u (x) |.
\end{equation}
Hence,
\begin{equation}
\label{eqn:sharpnorms}
\| u^\sharp \|_{L^2} = \| u \|_{L^2}, \quad \| u^\sharp \|_{L^{p+1}} = \| u
\|_{L^{p+1}}, \quad \| \nabla u^\sharp \|_{L^2} \leq \| \nabla u \|_{L^2},
\end{equation}
so if $u \in H^1_\pi (M)$ is an $F_\lambda$-minimizer (resp., energy
minimizer, etc.) so is $u^\sharp$.  Hence, $u^\sharp$ solves
\eqref{eqn:ellipODE} and is real valued, in fact $\geq 0$.  More precisely,
\begin{equation}
\label{eqn:psisharppos}
\psi^\sharp > 0 \ \text{on} \ (0,\infty).
\end{equation}
In fact, if $r_0 \in (0,\infty)$ and $\psi^\sharp (r_0 ) = 0$, then
$\psi^\sharp$ achieves a minimum at $r_0$, so $\pa_r \psi^\sharp (r_0)=0$.
Then, uniqueness for solutions to \eqref{eqn:psiODE} would force
$\psi^\sharp \equiv 0$.  Of course, when $u \in H^1_\pi (M)$ is continuous
and $\pi$ satisfies \eqref{eqn:piirredODE} and is not the trivial
representation, $u^\sharp (0) = 0$ and hence
\begin{equation}
\label{eqn:psisharp0}
\psi^\sharp (0) = 0.
\end{equation}

As noted above, after multiplying $u$ by a constant $e^{i \theta}$, we can
arrange that \eqref{eqn:upsipiODE} holds with $\psi(1) > 0$ (now that we know
that $\psi(1) \neq 0$).  With $\psi^\sharp$ and $u^\sharp$ as above, we see
that if $\psi$ is not real for all $r \in \RR^+$, then
\begin{equation}
\label{eqn:radkinlb}
\int_M | \pa_r u |^2\, d\Vol > \int_M | \pa_r u^\sharp |^2\, d \Vol,
\end{equation}
hence
\begin{equation*}
\| \nabla u \|_{L^2}^2 > \| \nabla u^\sharp \|_{L^2}^2.
\end{equation*}
Since this contradicts the minimizing property of $u$, we have the following.
\begin{proposition}
\label{prop:psireal}
Take $M = \RR^n$ and assume \eqref{eqn:piirredODE} and \eqref{eqn:dimV0ODE}
hold.  Let $u \in H^1_\pi (M)$ be an $F_\lambda$-minimizer solving
\eqref{eqn:ellipODE} (or an energy minimizer, etc.) under appropriate
hypotheses on $p$.  Take a unit $\phi \in V_0$, so \eqref{eqn:upsipiODE}
holds.  Then there exists a constant $e^{i \theta}$ such that if $u$ is
replaced by $e^{i \theta} u$ (relabeled $u$), we have $\psi > 0$ on
$(0,\infty)$.
\end{proposition}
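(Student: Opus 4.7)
The essential machinery is already laid out in the discussion preceding the statement, so the proof plan is essentially to assemble these observations into a clean argument. First I would recall that, because $\pi$ is irreducible and $\dim V_0 = 1$, any $u\in H^1_\pi(M)$ is uniquely determined by a scalar function $\psi:\RR^+\to\CC$ via $u(r,g\cdot p_0)=\psi(r)\pi(g)\phi$, where $\phi$ is a unit spanning vector of $V_0$. Then the map $u\mapsto u^\sharp$, defined by $u^\sharp(r,g\cdot p_0)=|\psi(r)|\pi(g)\phi$, produces a new element of $H^1_\pi(M)$ satisfying the norm relations in \eqref{eqn:sharpnorms}.

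Next I would observe that these norm relations immediately show $u^\sharp$ is itself an $F_\lambda$-minimizer (respectively energy minimizer or Weinstein maximizer), since these functionals are monotone under $\|\nabla u\|_{L^2}$ at fixed $L^2$ and $L^{p+1}$ norms. Therefore $u^\sharp$ solves \eqref{eqn:ellipODE}, and the associated real-valued scalar function $\psi^\sharp=|\psi|$ satisfies the ODE \eqref{eqn:psiODE}. Since $\psi^\sharp\geq 0$ everywhere, if $\psi^\sharp(r_0)=0$ for some $r_0\in(0,\infty)$ then $r_0$ is a minimum and hence $(\psi^\sharp)'(r_0)=0$, so uniqueness for the second-order ODE \eqref{eqn:psiODE} forces $\psi^\sharp\equiv 0$, contradicting that $u$ is a nontrivial minimizer. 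This gives $\psi^\sharp>0$ on $(0,\infty)$.

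The remaining and somewhat more delicate step is to upgrade the pointwise inequality $|\nabla u^\sharp|\leq|\nabla u|$ to the conclusion that $\psi$ itself must already be real (up to a global constant phase). After rotating by the phase $e^{-i\theta}$ so that $\psi(1)>0$, I would consider the pointwise identity, for a smooth complex scalar function $\psi(r)$ on $(0,\infty)$:
\begin{equation*}
|\pa_r\psi|^2 - \bigl|\pa_r|\psi|\bigr|^2 \;=\; \frac{|\Im(\bar\psi\,\pa_r\psi)|^2}{|\psi|^2},
\end{equation*}
which is valid where $\psi\neq 0$ (and we already know $\psi=\psi^\sharp>0$ a.e.\ after renormalization on the orbits). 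If $\psi$ were not real-valued on some set of positive measure in $(0,\infty)$, then $\Im(\bar\psi\,\pa_r\psi)$ would not vanish identically there, giving strict inequality $\|\pa_r u^\sharp\|_{L^2}<\|\pa_r u\|_{L^2}$; combined with the spherical/transverse components being identical under the map $u\mapsto u^\sharp$ (since $|\pi(g)\phi|$ is constant in $g$), this produces $\|\nabla u^\sharp\|_{L^2}<\|\nabla u\|_{L^2}$, contradicting minimality. Therefore $\psi$ is already real-valued, and $\psi=|\psi|=\psi^\sharp>0$ on $(0,\infty)$.

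The main obstacle, and the only point that requires any care, is justifying that strict inequality in \eqref{eqn:radkinlb} when $\psi$ is not a real multiple of a constant phase; the rest is organizational. One should also note that this entire argument tacitly uses the fact that the spherical Dirichlet energy of $u$ and $u^\sharp$ agree (they depend only on $|\psi|$ times an $r$-independent spherical factor controlled by $\mu_\pi^2$), so the only place where $u$ and $u^\sharp$ can differ in $\|\nabla\cdot\|_{L^2}$ is in the radial derivative — exactly the case handled by the pointwise identity above.
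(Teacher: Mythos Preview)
Your argument is correct and follows essentially the same route as the paper: define $u^\sharp$ via $|\psi|$, use the norm relations to see $u^\sharp$ is also a minimizer, invoke ODE uniqueness to get $\psi^\sharp>0$ on $(0,\infty)$, then derive a contradiction from strict inequality in the radial Dirichlet energy if $\psi$ is not real after a phase rotation. The only difference is that you make the strict inequality \eqref{eqn:radkinlb} explicit via the pointwise identity $|\pa_r\psi|^2-\bigl|\pa_r|\psi|\bigr|^2=|\Im(\bar\psi\,\pa_r\psi)|^2/|\psi|^2$, whereas the paper simply asserts it; your parenthetical ``$\psi=\psi^\sharp>0$'' should read $|\psi|=\psi^\sharp>0$, but this is a minor slip and does not affect the argument.
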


Moving on, let us replace the irreducibility hypothesis \eqref{eqn:piirredODE} by
\begin{align}
\label{eqn:altpidefODE}
\pi = \pi_1 \oplus \cdots \oplus \pi_k, & \ \ \ \pi_j \ \text{irreducible on}
\ V_j \subset V, \\
V_{j_0} \neq 0, \ \forall j, \ \ & \ \mu_{\pi_1} = \cdots = \mu_{\pi_k} =:
\mu_\pi. \notag
\end{align}
Then we continue to get the ODE \eqref{eqn:psiODE}, with $u$ and $v$ related
by \eqref{eqn:upsipiODE}.  Next, we replace \eqref{eqn:dimV0ODE} by
\begin{equation}
\label{eqn:linindODE}
v(r_0) \ \text{and} \ v' (r_0) \in V_0 \ \text{are linearly dependent},
\end{equation}
for some $r_0 \in (0,\infty)$.  Note that $V_0 = V_{10} \oplus \cdots \oplus
V_{k0}$; we are not assuming $\dim V_{j0} = 1$.  It follows from
\eqref{eqn:linindODE} that there exists a unit $\phi \in V_0$ such that
$v(r_0)$ and $v' (r_0)$ are scalar multiples of $\phi$.  The scalar nature
of \eqref{eqn:vortexODE} then implies that $v(r)$ has the form
\eqref{eqn:vphi} for all $r \in \RR^+$, with $\psi$ satisfying
\eqref{eqn:psiODE}.  From here, the rest of the argument yielding Proposition
\ref{prop:psireal} applies.  We have the following.
\begin{proposition}
\label{prop:psireal1}
Take $M = \RR^n$ and assume \eqref{eqn:altpidefODE} and \eqref{eqn:linindODE}
hold.  Let $u \in H^1_\pi (M)$ be an $F_\lambda$-minimizer solving
\eqref{eqn:ellipODE} (or an energy minimizer, etc.) under appropriate
hypotheses on $p$.  Then there exists $\phi \in V_0$ such that
\eqref{eqn:upsipiODE} holds, and there exists a constant $e^{i \theta}$ such
that if $u$ is replaced by $e^{i \theta} u$ (relabeled $u$), we have $\psi >
0$ on $(0,\infty)$.
\end{proposition}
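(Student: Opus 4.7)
The plan is to reduce to the setting of Proposition \ref{prop:psireal} by showing that, under hypothesis \eqref{eqn:linindODE}, the function $v(r)$ in \eqref{eqn:vdefODE} actually takes values in a single complex line inside $V_0$ for every $r$, so that \eqref{eqn:upsipiODE} holds for some fixed unit $\phi\in V_0$ and some scalar $\psi:\RR^+\to\CC$. The direct-sum hypothesis \eqref{eqn:altpidefODE}, combined with the common eigenvalue $\mu_\pi$ on every summand, is precisely what guarantees that the $V_0$-valued system \eqref{eqn:vortexODE} still holds with a single value of $\mu_\pi^2$, so that the standard decoupling into the scalar equation \eqref{eqn:psiODE} applies once $v$ has been confined to a complex line.

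First I would use \eqref{eqn:linindODE} to pick a unit vector $\phi\in V_0$ such that $v(r_0)=a\phi$ and $v'(r_0)=b\phi$ for some $a,b\in\CC$. I would then consider the scalar Cauchy problem consisting of \eqref{eqn:psiODE} together with the data $\psi(r_0)=a$, $\psi'(r_0)=b$, and let $\psi$ denote its solution on $(0,\infty)$. Since $|c\phi|^{p-1}(c\phi)=|c|^{p-1}c\,\phi$ for every $c\in\CC$ when $|\phi|=1$, the nonlinearity in \eqref{eqn:vortexODE} preserves the complex line $\Span_\CC(\phi)$, and hence $\tilde v(r):=\psi(r)\phi$ solves the full system \eqref{eqn:vortexODE} with $\tilde v(r_0)=v(r_0)$ and $\tilde v'(r_0)=v'(r_0)$. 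ODE uniqueness for \eqref{eqn:vortexODE} on $(0,\infty)$---valid because $v\mapsto |v|^{p-1}v$ is locally Lipschitz and the singular coefficients $(n-1)/r$, $\mu_\pi^2/r^2$ are smooth away from $r=0$---then forces $v\equiv\tilde v$, i.e., $v(r)=\psi(r)\phi$ for every $r\in(0,\infty)$. This establishes the representation \eqref{eqn:upsipiODE} globally.

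Once \eqref{eqn:upsipiODE} is in place, the remainder of the argument is a verbatim repetition of the proof of Proposition \ref{prop:psireal}. I would form $u^\sharp$ as in \eqref{eqn:sharpmap} using this same $\phi$; by \eqref{eqn:sharpnorms}, $u^\sharp$ lies in $H^1_\pi(M)$ and satisfies the same constraint as $u$ while having $F_\lambda(u^\sharp)\leq F_\lambda(u)$ (respectively $E(u^\sharp)\leq E(u)$, or $W(u^\sharp)\geq W(u)$), so $u^\sharp$ is itself a minimizer/maximizer. This gives equality $\|\nabla u^\sharp\|_{L^2}=\|\nabla u\|_{L^2}$, from which one reads off that $\psi(r)/|\psi(r)|$ is a constant unimodular factor $e^{i\theta}$ wherever $\psi\neq 0$, and replacing $u$ by $e^{-i\theta}u$ yields $\psi=|\psi|\geq 0$. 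The strict positivity $\psi>0$ on $(0,\infty)$ then follows from ODE uniqueness for \eqref{eqn:psiODE} exactly as in \eqref{eqn:psisharppos}: a zero of the nonnegative solution $\psi$ at an interior point would be simultaneously a zero of $\psi'$, forcing $\psi\equiv 0$.

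The main obstacle is the reduction carried out in the second paragraph: without \eqref{eqn:linindODE}, there is no reason for $v(r)$ to stay in any proper subspace of $V_0$, and the decoupling to \eqref{eqn:psiODE} fails outright. The role of \eqref{eqn:linindODE} is to provide initial data along a single complex line at one point $r_0$, which is then propagated in $r$ by the combination of ODE uniqueness and the invariance $|c\phi|^{p-1}(c\phi)\in\Span_\CC(\phi)$.
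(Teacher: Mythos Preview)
Your proof is correct and follows essentially the same approach as the paper: use \eqref{eqn:linindODE} to place $v(r_0)$ and $v'(r_0)$ in a single complex line $\Span_\CC(\phi)$, invoke ODE uniqueness for \eqref{eqn:vortexODE} (whose nonlinearity preserves that line) to conclude $v(r)=\psi(r)\phi$ for all $r$, and then repeat the argument of Proposition~\ref{prop:psireal}. One small point of order: to conclude that $\psi/|\psi|$ is a \emph{single} constant (rather than merely locally constant on $\{\psi\neq 0\}$), you should first observe---as the paper does---that $u^\sharp$ being a minimizer forces $|\psi|$ to solve \eqref{eqn:psiODE}, whence $|\psi|>0$ on $(0,\infty)$ and the set $\{\psi\neq 0\}$ is connected.
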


\begin{corollary}
\label{cor:psireal1}
Take $M = \RR^n$ and assume \eqref{eqn:altpidefODE} holds.  Let $u \in
H^1_\pi (M)$ be an $F_\lambda$-minimizer solving \eqref{eqn:ellipODE}
(or an energy minimizer, etc.), under appropriate hypotheses on $p$.
Then $u$ cannot vanish anywhere on $\RR^n \setminus \{ 0 \}$.
\end{corollary}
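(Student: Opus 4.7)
The plan is to reduce the corollary to the preceding Proposition \ref{prop:psireal1} by arguing that any hypothetical zero of $u$ off the origin supplies, for free, the linear dependence hypothesis \eqref{eqn:linindODE}.

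First I would argue by contradiction and suppose $u(x_0) = 0$ for some $x_0 \in \RR^n \setminus \{0\}$. Writing $x_0$ in polar coordinates as $x_0 = (r_0, g_0 \cdot p_0)$ with $r_0 > 0$, the equivariance relation \eqref{eqn:urgp0ODE} gives
\[
0 = u(x_0) = \pi(g_0) v(r_0),
\]
and since $\pi(g_0)$ is unitary on $V$, this forces $v(r_0) = 0$ in $V_0$. In particular $v(r_0)$ and $v'(r_0)$ are trivially linearly dependent in $V_0$ (the zero vector is dependent with anything), so hypothesis \eqref{eqn:linindODE} is satisfied at $r = r_0$.

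Next I would invoke Proposition \ref{prop:psireal1}, whose hypotheses \eqref{eqn:altpidefODE} and \eqref{eqn:linindODE} are now both in force. That proposition supplies a unit vector $\phi \in V_0$, a scalar function $\psi$, and a phase $e^{i\theta}$ such that (after replacing $u$ by $e^{i\theta} u$) one has
\[
u(r, g \cdot p_0) = \psi(r)\, \pi(g)\phi, \qquad \psi > 0 \ \text{on} \ (0,\infty).
\]
Comparing with \eqref{eqn:urgp0ODE} gives $v(r) = \psi(r)\phi$, so $v(r_0) = \psi(r_0)\phi$ with $\psi(r_0) > 0$ and $\|\phi\| = 1$, contradicting $v(r_0) = 0$.

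There is really no serious obstacle: the only point requiring attention is that hypothesis \eqref{eqn:altpidefODE} (in particular the common eigenvalue $\mu_{\pi_1} = \cdots = \mu_{\pi_k}$) is precisely what allows the $V_0$-valued vector ODE \eqref{eqn:vortexODE} to be reduced to the scalar equation \eqref{eqn:psiODE} once $v(r)$ stays in a one-dimensional subspace, which is what Proposition \ref{prop:psireal1} already exploits. With that proposition available, the corollary is immediate upon observing that a zero of $u$ off the origin descends to a zero of $v$, which is the forbidden scenario ruled out by the strict positivity $\psi > 0$.
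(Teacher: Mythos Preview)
Your argument is correct and follows essentially the same route as the paper's own proof: a zero of $u$ at some $x_0\neq 0$ forces $v(r_0)=0$ (via the unitarity of $\pi(g_0)$), which trivially yields the linear-dependence hypothesis \eqref{eqn:linindODE}, and then Proposition~\ref{prop:psireal1} produces $\psi>0$ on $(0,\infty)$, a contradiction. The paper simply states this in two lines; your write-up spells out the same steps with more detail.
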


\begin{proof}

If $x \neq 0$ and $u(x) = 0$, then $v(r_0) = 0$ for $r_0 = |x|$, so
\eqref{eqn:linindODE} holds.  Hence, Proposition \ref{prop:psireal1}
applies, yielding a contradiction.

\end{proof}

\begin{question}
With or without hypothesis \eqref{eqn:piirredODE} or \eqref{eqn:altpidefODE},
for a solution $u \in H^1_\pi (\RR^n)$ to \eqref{eqn:ellipODE}, obtained as
an $F_\lambda$-minimizer(or an energy minimizer, etc.) under appropriate
hypotheses on $p$, and with $v$ as in \eqref{eqn:urgp0ODE}, is it possible
for $v(r_0)$ and $v' (r_0)$ to be linearly independent?
\end{question}

We move on to more general Riemannian manifolds with rotational symmetry,
as in \eqref{eqn:M1}--\eqref{eqn:M3}, with metric tensor as in
\eqref{eqn:metric1}, i.e.,
\begin{equation}
\label{eqn:metricarray}
g = \left(  \begin{array}{cc} 
1 &  \\
  &  h(r) 
 \end{array} \right).
 \end{equation}
 We have, in place of \eqref{eqn:radLapODE},
 \begin{equation}
 \label{eqn:metLapODE}
 \Delta u = \gamma^{-1/2} \pa_j ( \gamma^{1/2} g^{jk} \pa_k u ),
 \end{equation}
 with
 \begin{equation}
 \label{eqn:gammadef}
 \gamma = \det g = \det h.
 \end{equation}
 Consequently,
 \begin{equation}
 \label{eqn:metellip}
 \Delta u = \pa_r^2 u + \gamma^{-1/2} ( \pa_r \gamma^{1/2} ) \pa_r u
 + \Delta_{h(r)} u.
 \end{equation}
 Note that if $M = \RR^n$ with its standard metric, then $h(r) = r^2 h(1)$,
 so $\gamma = r^{2(n-1)} \det h(1)$, and hence $\gamma^{-1/2}
 (\pa_r \gamma^{1/2}) = (n-1)/{r}$, as in \eqref{eqn:radLapODE}.
 More generally, since $(\det h(r))^{1/2}$ is the area density of
 $\SS^{n-1}$ with metric tensor $h(r)$, we have
 \begin{equation}
 \label{eqn:gammaArel}
 \gamma^{-1/2} (\pa_r \gamma^{1/2}) = \frac{ A' (r) }{A(r)},
 \end{equation}
 where $A(r)$ is the $(n-1)$-dimensional area of $\SS^{n-1}$ with metric
 tensor $h(r)$, as in \eqref{eqn:volel1}.  Thus, we have
 \begin{equation}
 \label{eqn:LapEucA}
 \Delta u = \pa_r^2 u + \frac{ A' (r) }{A(r)} \pa_r u + \Delta_{h(r)} u.
 \end{equation}
 By \eqref{eqn:intvolel}, 
 \begin{equation}
 \label{eqn:LapHypA}
 \frac{A'(r)}{A(r)} = (n-1) \frac{ \cosh r}{ \sinh r}, \quad \text{if} \ M =
 \HH^n.
 \end{equation}
 
 The sphere $\SS^{n-1}$ has only one metric tensor invariant
 under $SO(n)$, up to a constant factor, so if $G = SO(n)$, we must have
 \begin{equation}
 \label{eqn:hdecomp}
 h(r) = \sigma(r)^2 h_\SS, \ \ \sigma : I \to (0,\infty),
 \end{equation}
 where $I$ is $(0,\infty)$, $[1,\infty)$ or $(-\infty, \infty)$, and $h_\SS$
 is the standard metric on the unit sphere in $\RR^n$.  Note that
 \begin{equation}
 \label{eqn:Asigma}
 A (r) = \sigma(r)^{n-1} A_n,
 \end{equation}
 so $A'(r)/A(r) = (n-1) \sigma' (r)/ \sigma (r)$.  Also,
 \begin{equation}
 \label{eqn:hLap}
 \Delta_{h(r)} = \frac{1}{\sigma(r)^2} \Delta_\SS,
 \end{equation}
 with $\Delta_\SS$ as in \eqref{eqn:radLapODE}.  Consequently, when $G = SO(n)$,
 we have
 \begin{equation}
 \label{eqn:sigmaLap}
 \Delta u = \pa_r^2 u  + (n-1) \frac{\sigma'(r)}{\sigma(r)} \pa_r u +
 \frac{1}{\sigma(r)^2} \Delta_\SS u.
 \end{equation}
 Let $u \in H^1_\pi (M)$ solve \eqref{eqn:ellipODE}, and take $v$ as in
 \eqref{eqn:vdefODE}.  Under hypothesis \eqref{eqn:piirredODE} on the
 representation $\pi$, which implies \eqref{eqn:SSspec}, or more generally
 under hypothesis \eqref{eqn:altpidefODE}, we get from \eqref{eqn:sigmaLap}
 the ODE
 \begin{equation}
 \label{eqn:sigmaODE}
 v''(r) + (n-1) \frac{ \sigma' (r)}{\sigma (r)} v' (r)
 - \frac{ \mu_\pi^2}{\sigma(r)^2} v(r) - \lambda v(r) + |v(r)|^{p-1} v(r) = 0,
 \end{equation}
 in place of \eqref{eqn:vortexODE}, as long as \eqref{eqn:hLap} holds.
 
 We now turn to the case where $G$ is a proper subgroup of $SO(n)$, acting
 transitively on $\SS^{n-1}$, such as $SU (m)$ or $U(m)$, when $n = 2m$.
 In such a case, \eqref{eqn:hdecomp} need not hold, and the various operators
 $\Delta_{h(r)}$  need not be multiples of each other.  We still have the
 following.  Assume the irreducibility condition \eqref{eqn:piirredODE}.
 Then, we can assume $V \subset L^2 ( \SS^{n-1})$ and $\pi$ is given by
 \eqref{eqn:regrepODE}.  In such a case, $\pi$ commutes with $\Delta_h$ for
 each $G$-invariant metric tensor $h$ on $\SS^{n-1}$.  Hence $\pi$ commutes
 with $\Delta_{h(r)}$ for each $r$, so extending \eqref{eqn:SSspec}, we have
 \begin{equation}
 \label{eqn:SSspec1}
 f \in V \Rightarrow \Delta_{h(r)} f = -\mu_\pi (r)^2 f,
 \quad \mu_\pi (r)^2 \in \RR^+.
 \end{equation}
 Hence, \eqref{eqn:LapEucA} gives
 \begin{equation}
 \label{eqn:LapEucAalt}
 \Delta u = \pa_r^2 u + \frac{A'(r)}{A(r)} \pa_r u - \mu_\pi (r)^2 u, 
 \end{equation}
 so if $u \in H^1_\pi (M)$ solves \eqref{eqn:ellipODE} and we take $v$ as in
 \eqref{eqn:vdefODE}, then under hypothesis \eqref{eqn:piirredODE} we get the
 ODE
 \begin{equation}
 \label{eqn:vortexODERie}
 v''(r) + \frac{A'(r)}{A(r)} v'(r) - \mu_\pi (r)^2 v(r) - \lambda v(r) + |
 v(r)|^{p-1} v = 0.
 \end{equation}

\section{Axial Vortices}
\label{sec:vortaxial}

Here we take $G \subset SO(n)$, acting transitively on $\SS^{n-1}$, then
acting on $\RR^{n+k}$ by
\begin{equation}
\label{eqn:axialmetten}
\left( \begin{array}{cc}
g & \\
 & I 
 \end{array} \right), 
 \end{equation}
 where $I$ is the $k \times k$ identity matrix.  Given a unitary
 representation $\pi$ of $G$ on $V$, we define $H^1_\pi ( \RR^{n+k})$ as in
 \eqref{eqn:Hspi}.  We call elements of such a space axial vortices.
 
 In Section \ref{sec:vortaxialFlambda}, we seek to minimize $$F_\lambda (u) =
 \| \nabla u \|_{L^2}^2 + \lambda \| u \|_{L^2}^2$$ over $u \in H^1_\pi
 (\RR^{n+k})$, subject to the constraint $J_p (u) = \| u \|_{L^{p+1}}^{p+1}
 = \beta$, with $\beta \in (0, \infty)$ given.  We assume
 \begin{equation}
 \label{eqn:axialpbds}
 1 < p < \frac{n+k+2}{n+k-2}, \quad \lambda > 0.
 \end{equation}
This guarantees that $F_\lambda \equiv \| u \|_{H^1}^2$ and note that
\begin{equation}
\label{eqn:axialsobem}
H^1 (\RR^{n+k}) \subset L^{p+1} ( \RR^{ n+k} ) , \quad \forall\,
p \in \left( 1,\frac{n+k+2}{n+k-2} \right).
\end{equation}
Calculations parallel to \eqref{eqn:dFlambda}--\eqref{eqn:Kdef} show that,
if $u \in H^1_\pi (\RR^{n+k})$ achieves the minimum of $\II (\beta, \pi )$
defined by
\begin{equation}
\label{eqn:IIdefaxial}
\II (\beta, \pi) = \inf \{ F_\lambda (u) \ : \ u \in H^1_\pi ( \RR^{n+k}) ,
\ J_p (u) = \beta \},
\end{equation}
then
\begin{equation}
\label{eqn:axialellip}
-\Delta u + \lambda u = K |u|^{p-1} u,
\end{equation}
with 
\begin{equation}
\label{eqn:Kdefaxial}
K = \beta^{-1} \II (\beta, \pi).
\end{equation}
Note that
\begin{equation}
\label{eqn:IIlb}
\II (\beta, \pi) \geq \II (\beta),
\end{equation}
where 
\begin{equation}
\label{eqn:IIH1}
\II (\beta) = \{ F_\lambda (u) \ : \ u \in H^1 ( \RR^{n+k}), \ J_p (u) =
\beta \}.
\end{equation}
It follows from \eqref{eqn:axialsobem} that $\| u \|_{L^{p+1}}^2 \leq C
F_\lambda (u)$ for some $C \in (0, \infty)$, which in turn implies $\II
(\beta) > 0$, so also is $\II (\beta, \pi) > 0$.  Parallel to \eqref{eqn:ua},
we can multiply \eqref{eqn:axialellip} by a constant to obtain a solution to
\begin{equation}
\label{eqn:nlbsaxial}
\Delta u - \lambda u + |u|^{p-1} u = 0.
\end{equation}

In Section \ref{sec:vortaxialFlambda} we establish the existence of
$F_\lambda$-minimizers, producing axial vortex solutions to
\eqref{eqn:nlbsaxial}, given natural hypotheses on $p$ and $\lambda$
(cf.~\eqref{eqn:axialpbds}).  Analogous arguments can be brought to bear
to produce axial vortices that are energy minimizers or Weinstein
functional maximizers, but we do not pursue the details here.

Parallel to the ODE study in Section \ref{sec:vortspherODE}, Section
\ref{sec:vortaxialPDE} derives reduced variable PDE for axial standing
waves, and uses these equations to derive further information about these
solutions.

\subsection{$F_\lambda$-minimizers}
\label{sec:vortaxialFlambda}

Here we tackle minimization of $F_\lambda (u)$ over $u \in H^1_\pi (\RR^{n+k})$,
subject to the constraint $J_p (u) = \beta$ under hypotheses
\eqref{eqn:axialpbds} on $p$ and $\lambda$.  Our argument follows one given
in Section $2.1$ of \cite{CMMT}, with some necessary differences in detail.
Thus, with $\II( \beta, \pi)$ as in
\eqref{eqn:IIdefaxial}, take $u_\nu \in H^1_\pi(\RR^{n+k})$ such that
\begin{equation}
\label{eqn:axialminseq}
J_p (u_\nu) = \beta, \ F_\lambda (u_\nu) \leq \II (\beta, \pi ) +
\frac{1}{ \nu}.
\end{equation}
Passing to a subsequence if necessary, we have
\begin{equation}
\label{eqn:wkconvaxial}
u_\nu \to u \in H^1_\pi (\RR^{n+k}),
\end{equation}
converging in the weak topology.  Rellich's theorem implies
\begin{equation}
\label{eqn:H1embaxial}
H^1 (\RR^{n+k}) \hookrightarrow L^{p+1} (\Omega)
\end{equation}
is compact provided $\Omega \subset \RR^{n+k}$ is relatively compact.  Thus,
for such $\Omega$,
\begin{equation}
\label{eqn:Lpconvaxial}
u_\nu \to u \ \text{in} \ L^{p+1} (\Omega)\text{-norm}.
\end{equation}

To proceed, we use the concentration-compactness method, previewed in
Section \ref{sec:vortspherenmin}.  In particular, we will use Lemma
\ref{lem:conccompman} with
\begin{equation}
\label{eqn:ccmeasaxial}
\mu_\nu (E) = \int_E | u_\nu |^{p+1}\, d \Vol.
\end{equation}
This lemma provides a vanishing-concentration-splitting trichotomy, and we
must show that concentration is the only possibility.  First we show that
vanishing cannot occur. One tool will be Lemma \ref{lem:van}, which
we restate here in a slightly different form, since the dimension count has
changed.

\begin{lemma}
\label{lem:axialnovanishing}
Assume $\{ u_\nu \}$ is bounded in $H^1 (\RR^{n+k})$ and
\begin{equation}
\label{eqn:axialvan}
\lim_{\nu \to \infty} \sup_{z \in \RR^{n+k}} \int_{B_R(z)} | u_\nu |^2 d \Vol
= 0, \ \text{for some} \ R > 0.
\end{equation}
Then,
\begin{equation}
\label{eqn:Lrto0axial}
2<r<\frac{2 (n+k)}{n+k-2} \Longrightarrow \| u_\nu \|_{L^r (\RR^{n+k})} \to 0.
\end{equation}
\end{lemma}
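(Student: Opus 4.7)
The plan is to follow the argument outlined immediately after Lemma \ref{lem:van}, which goes back to Lions \cite{L2}, now specialized to $\RR^{n+k}$ instead of a general Riemannian manifold with bounded geometry. The two geometric inputs needed are (i) the Sobolev embedding $H^1(B_R(z))\hookrightarrow L^{2(n+k)/(n+k-2)}(B_R(z))$ on balls, with constant independent of $z$, and (ii) a covering of $\RR^{n+k}$ by balls of radius $R$ with multiplicity bounded by some $m\in\NN$; both are classical on Euclidean space.

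The main step is to pick a convenient intermediate exponent that makes the covering argument close cleanly. Set $q_0 = 2 + 4/(n+k)$, which lies in $(2,\,2(n+k)/(n+k-2))$. Combining the Sobolev inequality on a ball with H\"older interpolation between $L^2$ and $L^{2(n+k)/(n+k-2)}$ yields the local estimate
\begin{equation*}
\|u\|_{L^{q_0}(B_R(z))}^{q_0} \leq C \|u\|_{L^2(B_R(z))}^{q_0-2}\, \|u\|_{H^1(B_R(z))}^{2},
\end{equation*}
with $C$ independent of $z$. The exponent $2$ on the $H^1$ factor is not an accident: the Gagliardo--Nirenberg interpolation exponent $\theta=(n+k)(1/2-1/q_0)$ satisfies $\theta q_0=2$ precisely when $q_0=2+4/(n+k)$. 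Using the hypothesis $\eta_\nu:=\sup_{z} \int_{B_R(z)}|u_\nu|^2\,d\Vol \to 0$, one bounds $\|u_\nu\|_{L^2(B_R(z))}^{q_0-2}\le \eta_\nu^{(q_0-2)/2}$, and then sums over a bounded-multiplicity covering $\{B_R(z_j)\}_j$:
\begin{equation*}
\|u_\nu\|_{L^{q_0}(\RR^{n+k})}^{q_0} \leq \sum_j \|u_\nu\|_{L^{q_0}(B_R(z_j))}^{q_0} \leq C m\, \eta_\nu^{(q_0-2)/2}\, \|u_\nu\|_{H^1(\RR^{n+k})}^{2}.
\end{equation*}
Since $\{u_\nu\}$ is bounded in $H^1(\RR^{n+k})$, the right side tends to $0$, giving the conclusion at the distinguished exponent $r=q_0$.

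To pass to the full range $r\in(2,\,2(n+k)/(n+k-2))$, I would conclude by a simple H\"older interpolation: for $r\in(2,q_0]$, interpolate $L^r$ between $L^2$ and $L^{q_0}$; for $r\in[q_0,\,2(n+k)/(n+k-2))$, interpolate between $L^{q_0}$ and $L^{2(n+k)/(n+k-2)}$, using the Sobolev embedding to bound the latter by $C\|u_\nu\|_{H^1}$. In both cases the vanishing of $\|u_\nu\|_{L^{q_0}}$ together with the uniform $H^1$ (hence $L^2$) bound forces $\|u_\nu\|_{L^r}\to 0$.

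There is no serious obstacle here; the only nonroutine point is selecting $q_0=2+4/(n+k)$, which is the unique exponent at which the local Gagliardo--Nirenberg inequality produces exactly one factor of $\|u\|_{H^1(B_j)}^2$, matching what bounded multiplicity allows us to sum. For any other choice the final summation step would fail, and one would need either iteration or Hölder tricks on the sum, neither of which is needed here.
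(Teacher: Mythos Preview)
Your proposal is correct and follows exactly the Lions argument that the paper invokes; the paper itself does not give a separate proof of this lemma but simply presents it as the $\RR^{n+k}$ instance of Lemma~\ref{lem:van} (i.e., Lions's Lemma~I.1 in \cite{L2}), resting on precisely the two geometric ingredients you name---uniform Sobolev embedding on balls and a bounded-multiplicity covering. Your choice of $q_0=2+4/(n+k)$ and the subsequent interpolation to the full range $(2,\,2(n+k)/(n+k-2))$ is the standard implementation of that argument.
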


\begin{corollary}
Suppose $\{ u_\nu \}$ satisfies \eqref{eqn:axialminseq}.  Then, no
subsequence can satisfy the vanishing condition \eqref{eqn:ccvan}, with
$\mu_\nu$ as in \eqref{eqn:ccmeasaxial}.
\end{corollary}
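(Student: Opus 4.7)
The plan is to derive a contradiction with the constraint $J_p(u_\nu) = \beta > 0$ by showing that vanishing would force $\|u_\nu\|_{L^{p+1}} \to 0$. Suppose, for contradiction and after passing to a subsequence, that vanishing \eqref{eqn:ccvan} holds for the measures $\mu_\nu$ defined in \eqref{eqn:ccmeasaxial}, so that
\[
\sup_{y \in \RR^{n+k}} \int_{B_R(y)} |u_\nu|^{p+1}\, d\Vol \longrightarrow 0
\]
for every $R > 0$. Lemma \ref{lem:axialnovanishing} is phrased in terms of the $|u_\nu|^2$ density rather than $|u_\nu|^{p+1}$, so the first step is to transfer the smallness to the $L^2$ setting. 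Since $p+1 > 2$ (by \eqref{eqn:axialpbds}) and the balls $B_R(y) \subset \RR^{n+k}$ have volume independent of $y$, Hölder's inequality gives
\[
\int_{B_R(y)} |u_\nu|^2\, d\Vol \leq |B_R(y)|^{(p-1)/(p+1)}
\left(\int_{B_R(y)} |u_\nu|^{p+1}\, d\Vol \right)^{2/(p+1)},
\]
uniformly in $y$, whence $\sup_y \int_{B_R(y)} |u_\nu|^2\, d\Vol \to 0$.

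Next, note that under the hypothesis $\lambda > 0$ from \eqref{eqn:axialpbds} we have $F_\lambda(u) \simeq \|u\|_{H^1}^2$, so \eqref{eqn:axialminseq} furnishes a uniform $H^1$ bound on $\{u_\nu\}$. Thus the hypotheses of Lemma \ref{lem:axialnovanishing} are met, and I would conclude that $\|u_\nu\|_{L^r(\RR^{n+k})} \to 0$ for every $r \in (2, 2(n+k)/(n+k-2))$. The hypothesis \eqref{eqn:axialpbds} is precisely $p+1 \in (2, 2(n+k)/(n+k-2))$, so in particular $\|u_\nu\|_{L^{p+1}}^{p+1} \to 0$, contradicting $J_p(u_\nu) = \beta$. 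The only genuine technical content is the Hölder step above that funnels $L^{p+1}$-vanishing into $L^2$-vanishing on bounded balls; everything else is an immediate application of Lemma \ref{lem:axialnovanishing}, so no significant obstacle is expected.
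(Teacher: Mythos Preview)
Your proof is correct and follows essentially the same approach as the paper: assume vanishing for the $|u_\nu|^{p+1}$ measures, use H\"older on balls to deduce vanishing for the $|u_\nu|^2$ density, then invoke Lemma~\ref{lem:axialnovanishing} to conclude $\|u_\nu\|_{L^{p+1}}\to 0$, contradicting $J_p(u_\nu)=\beta$. Your version is in fact slightly more explicit (you write out the H\"older step and note the $H^1$ bound from $F_\lambda\simeq\|\cdot\|_{H^1}^2$), but the logic is identical.
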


\begin{proof}
Assume vanishing as in \eqref{eqn:ccvan} does not occur.  Then, by H\"older's
inequality on finite measure balls, \eqref{eqn:axialvan} holds.  Then
\eqref{eqn:Lrto0axial} holds with $r = p+1$.  This contradicts the assumption
$J_p (u) = \beta > 0$.
\end{proof}

To show splitting is impossible, we note that $\II (\beta, \pi)$ has the
following property.  For all $\beta > 0$,
\begin{equation}
\label{eqn:subaddaxial}
\II (\beta, \pi) < \II (\eta, \pi) + \II (\beta - \eta, \pi),
\quad \forall\, \eta \in (0, \beta).
\end{equation}
This follows immediately from 
\begin{equation}
\label{eqn:IIscalingaxial}
\II (\beta, \pi) = \II (1, \pi) \beta^{{2}/({p+1})} .
\end{equation}
The identity \eqref{eqn:IIscalingaxial} follows via a computation identical
to (2.1.8)--(2.1.12) in \cite{CMMT}.

We now show that the splitting cannot occur.

\begin{lemma}
\label{lem:nosplitaxial}
In the setting of \eqref{eqn:subaddaxial}, if $\{ u_\nu \} \subset H^1_\pi
(\RR^{n+k})$ is an $F_\lambda$ minimizing sequence as in \eqref{eqn:axialminseq},
then splitting as in \eqref{eqn:ccsplit1}--\eqref{eqn:ccsplit2} with $\mu_\nu$
as in \eqref{eqn:ccmeasaxial} cannot occur.
\end{lemma}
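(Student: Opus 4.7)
Suppose for contradiction that splitting occurs. Passing to a subsequence and replacing $E_\nu^\sharp, E_\nu^b$ by $G\cdot E_\nu^\sharp, G\cdot E_\nu^b$ (which remain well-separated and carry the same $\mu_\nu$-mass because $u_\nu\in H^1_\pi$ implies $|u_\nu|$ is $G$-invariant), we may assume both sets are $G$-invariant. Fix $\epsilon>0$ and choose $\nu_0$ such that for $\nu\geq\nu_0$ we have $|\mu_\nu(E_\nu^\sharp)-\alpha|<\epsilon$, $|\mu_\nu(E_\nu^b)-(\beta-\alpha)|<\epsilon$, and $\dist(E_\nu^\sharp,E_\nu^b)>N$ for $N$ as large as we please.

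The central step is a pigeonhole choice of a thin $G$-invariant transition shell. For each $\nu$, let $\tilde E_\nu(r)=\{x\in\RR^{n+k}:\dist(x,E_\nu^\sharp)\leq r\}$ and set $S_\nu(d)=\tilde E_\nu(d+2)\setminus\tilde E_\nu(d)$. Since $\{u_\nu\}$ is bounded in $H^1(\RR^{n+k})$ and in $L^{p+1}(\RR^{n+k})$ (by \eqref{eqn:axialsobem}), summing over a packing of $\lfloor N/2\rfloor$ disjoint such shells $S_\nu(d_j)$ with $d_j<\dist(E_\nu^\sharp,E_\nu^b)$ shows that we can pick $d_\nu$ so that
\beq
\int_{S_\nu}\bigl(|u_\nu|^2+|\nabla u_\nu|^2+|u_\nu|^{p+1}\bigr)\,d\Vol<\epsilon,
\label{smallshell}
\eeq
with $S_\nu:=S_\nu(d_\nu)\subset\RR^{n+k}\setminus(E_\nu^\sharp\cup E_\nu^b)$. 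Because the function $x\mapsto\dist(x,E_\nu^\sharp)$ is $G$-invariant, the shell $S_\nu$ is automatically $G$-invariant.

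Now define $G$-invariant Lipschitz cutoffs $\chi_\nu^\sharp,\chi_\nu^b$ with Lipschitz constant $1$ and almost disjoint supports, exactly as in \eqref{eqn:chinusharp}--\eqref{eqn:chinub}, and set $u_\nu^\sharp=\chi_\nu^\sharp u_\nu$, $u_\nu^b=\chi_\nu^b u_\nu$. These lie in $H^1_\pi(\RR^{n+k})$. From \eqref{smallshell} we get
\beq
|J_p(u_\nu^\sharp)-\alpha|<3\epsilon,\qquad |J_p(u_\nu^b)-(\beta-\alpha)|<3\epsilon,
\eeq
and, using $\nabla u_\nu^\sharp=\chi_\nu^\sharp\nabla u_\nu+(\nabla\chi_\nu^\sharp)u_\nu$ together with the disjoint-support decomposition outside $S_\nu$,
\beq
F_\lambda(u_\nu^\sharp)+F_\lambda(u_\nu^b)\leq F_\lambda(u_\nu)+C\epsilon.
\eeq

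To convert this into a contradiction, rescale. Set $a_\nu^\sharp=(\alpha/J_p(u_\nu^\sharp))^{1/(p+1)}$ and $a_\nu^b=((\beta-\alpha)/J_p(u_\nu^b))^{1/(p+1)}$; both tend to $1$ as $\epsilon\to 0$. Then $a_\nu^\sharp u_\nu^\sharp$ and $a_\nu^b u_\nu^b$ are admissible competitors for $\II(\alpha,\pi)$ and $\II(\beta-\alpha,\pi)$ respectively, so
\beq
\II(\alpha,\pi)+\II(\beta-\alpha,\pi)\leq (a_\nu^\sharp)^2 F_\lambda(u_\nu^\sharp)+(a_\nu^b)^2 F_\lambda(u_\nu^b)\leq F_\lambda(u_\nu)+C'\epsilon.
\eeq
Letting $\nu\to\infty$ and then $\epsilon\to 0$ yields $\II(\alpha,\pi)+\II(\beta-\alpha,\pi)\leq\II(\beta,\pi)$, contradicting the strict subadditivity \eqref{eqn:subaddaxial}.

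The principal difficulty is ensuring the transition shell $S_\nu$ can be chosen both $G$-invariant and thin enough that \eqref{smallshell} holds; the pigeonhole argument over a packing of disjoint shells between $E_\nu^\sharp$ and $E_\nu^b$ (whose separation is free to grow with $\nu$) is what makes this work, and $G$-invariance is automatic because the distance function to a $G$-invariant set is $G$-invariant.
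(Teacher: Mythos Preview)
Your proof is correct and follows essentially the same route as the paper's: assume splitting, use the $G$-invariance of the distance to $E_\nu^\sharp$ to build $G$-invariant shells and Lipschitz cutoffs, pigeonhole to find a thin shell $S_\nu$ with small mass, and then compare $F_\lambda(u_\nu^\sharp)+F_\lambda(u_\nu^b)$ against $F_\lambda(u_\nu)$ to contradict \eqref{eqn:subaddaxial}. The only cosmetic differences are that you pigeonhole on the full $H^1$ and $L^{p+1}$ content of $S_\nu$ (the paper pigeonholes only on $\int_{S_\nu}|u_\nu|^2$, which suffices since the cutoffs have disjoint supports) and that you normalize $u_\nu^\sharp,u_\nu^b$ by explicit rescaling, whereas the paper absorbs the $O(\epsilon)$ discrepancy in $J_p$ via the continuity of $\beta\mapsto\II(\beta,\pi)$ coming from \eqref{eqn:IIscalingaxial}.
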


\begin{proof}

Assume splitting does occur. In other words,
there exists $\alpha\in (0,\beta)$ and for each $\epsilon>0$, sets
$E^\sharp_\nu,\ E^b_\nu\subset\RR^{n+k}$ such that
\eqref{eqn:ccsplit1}--\eqref{eqn:ccsplit2} occur,
with $\mu_\nu$ as in \eqref{eqn:ccmeasaxial}.
We can assume that $E_\nu^\sharp$ and $E_\nu^b$ are
invariant under the action of $G$.  Choose $\epsilon > 0$ sufficiently small
such that
\begin{equation}
\label{eqn:IIubepsaxial}
\II (\beta, \pi) < \II (\alpha, \pi) + \II ( \beta - \alpha, \pi) -
C_1 \epsilon,
\end{equation}
where $C_1 > 0$ is a sufficiently large constant to be fixed later.
Since $\| u_\nu \|_{H^1 (M)}$ and $\| u_\nu \|_{L^{p+1} (M)}$ are uniformly
bounded, it follows from \eqref{eqn:ccsplit1} that there exists $\nu_1$ such
that $\nu \geq \nu_1$ implies
\begin{equation}
\label{eqn:boundaxial}
\int_{S_\nu} |u_\nu|^2\, d \Vol < \epsilon,
\end{equation}
where $S_\nu$ is a set of the form
\begin{equation}
\label{eqn:Snuaxial}
S_\nu = \{ z \in \RR^{n+k} \ : \ d_\nu < \dist (z, E_\nu^\sharp) \leq d_\nu
+ 2 \} \subset \RR^{n+k} \setminus ( E_\nu^\sharp \cup E_\nu^b),
\end{equation}
for some $d_\nu > 0$.  In other words, for $r > 0$, $\nu \geq \nu_1$ we have
\begin{equation}
S_\nu = \tilde{E}_\nu (d_\nu + 2) \setminus \tilde{E}_\nu (d_\nu),
\end{equation}
where
\begin{equation}
\label{eqn:tildeEnuaxial}
\tilde{E}_\nu (r) = \{ z \in \RR^{n+k} \ : \ \dist (z, E_\nu^\sharp) \leq r \}.
\end{equation}
Now, define functions $\chi_\nu^\sharp$ and $\chi_\nu^b$ by
\begin{align}
\chi_\nu^\sharp & = 1, \hspace{3cm} \text{if} \ x \in \tilde{E}_\nu (d_\nu),
\notag \\
 & = 1- \dist (x, \tilde{E}_\nu (d_\nu) ), \quad \text{if} \ x \in
 \tilde{E}_\nu (d_\nu + 1), \label{eqn:chinusharpaxial} \\
 & = 0, \hspace{3cm} \text{if} \ x \notin \tilde{E}_\nu (d_\nu + 2), \notag 
 \end{align}
 and 
\begin{align}
\chi_\nu^b & = 0, \hspace{3cm} \text{if} \ x \in \tilde{E}_\nu (d_\nu + 1),
\notag \\
 & = \dist (x, \tilde{E}_\nu (d_\nu+1) ),
 \quad \text{if} \ x \in \tilde{E}_\nu
 (d_\nu + 2), \label{eqn:chinubaxial} \\
 & = 1, \hspace{3cm} \text{if} \ x \notin \tilde{E}_\nu (d_\nu + 2). \notag 
 \end{align}
 These functions are both Lipschitz with Lipschitz constant $1$ and almost
 disjoint supports.  Also, they are invariant under the action of $G$.  Set
 \begin{equation}
 \label{eqn:unusharp_baxial}
 u_\nu^\sharp = \chi_\nu^\sharp u_\nu,\ \ u_\nu^b = \chi_\nu^b u_\nu \in
 H^1_\pi (\RR^{n+k}).
 \end{equation}
 Note that since $0 \leq \chi_\nu^\sharp + \chi_\nu^b \leq 1$, we have
 \begin{equation}
 \label{eqn:Jpaxial}
 J_p ( u_\nu^\sharp) + J_p (u_\nu^b) = \int (\chi_\nu^\sharp + \chi_\nu^b)
 | u_\nu|^{p+1} d\Vol \leq J_p (u_\nu) = \beta.
 \end{equation}
 Also, given $\lambda > 0$,
 \begin{equation}
 \label{eqn:lambdaaxialbds}
\lambda \| u_\nu^\sharp \|_{L^2}^2 + \lambda \| u_\nu^b \|_{L^2}^2 \leq
\lambda \| u_\nu \|_{L^2}^2.
 \end{equation}
 We have $\nabla u_\nu^\sharp = \chi_\nu^\sharp \nabla u_\nu + (\nabla
 \chi_\nu^\sharp) u_\nu$, and similarly for $u_\nu^b$, and $| \nabla
 \chi_\nu^\sharp| \leq 1$ except for a set of measure $0$, so
 \begin{equation}
 \label{eqn:nablalambdabdaxial}
 \| \nabla  u_\nu^\sharp \|_{L^2}^2 +  \| \nabla u_\nu^b \|_{L^2}^2 \leq
 \| \nabla u_\nu \|_{L^2}^2 + \int_{S_\nu} |u_\nu|^2 d \Vol.
 \end{equation}
 Hence, 
 \begin{equation}
 \label{eqn:Fsplitbdsaxial}
F_\lambda (u_\nu^\sharp) + F_\lambda (u_\nu^b) \leq F_\lambda (u_\nu) +
\epsilon.
 \end{equation}
 Using the support properties of $u_\nu^\sharp$, $u_\nu^b$, together with
 \eqref{eqn:ccsplit1}--\eqref{eqn:ccsplit2} yields
 \begin{equation}
 \label{eqn:Jpbdsaxial}
 | J_p ( u_\nu^\sharp) - \alpha|, \ | J_p ( u_\nu^b ) - (\beta - \alpha)|
 \leq 3 \epsilon.
 \end{equation}
Combining \eqref{eqn:Fsplitbdsaxial} and \eqref{eqn:Jpbdsaxial} and letting
$\nu \to \infty$,
 \begin{equation}
 \label{eqn:IIlbepsaxial}
 \II (\alpha, \pi) + \II (\beta-\alpha, \pi)  \leq \II ( \beta, \pi) + C_0
 \epsilon.
 \end{equation}
 Hence, if $C_1$ is chosen sufficiently large in \eqref{eqn:IIubepsaxial}
 (which simply amounts to producing $\epsilon>0$ sufficiently small), we
 contradict \eqref{eqn:IIubepsaxial}.  This contradiction proves Lemma
 \ref{lem:nosplitaxial}.
 
\end{proof}

Using Lemmas \ref{lem:axialnovanishing} and \ref{lem:nosplitaxial}, we have
the following proposition, which states that for a minimizing sequence
$(u_\nu)$, only the concentration phenomenon can occur.

\begin{proposition}
\label{prop:concaxial}
Let $\{ u_\nu \} \subset H^1_\pi ( \RR^{n+k})$ be a minimizing sequence, as
in \eqref{eqn:axialminseq}.  Then, every subsequence of $\{ u_\nu \}$ has a
further subsequence (which we continue to denote $\{ u_\nu \}$) with the
following property.  There exists a subsequence $\{ z_\nu \} \subset
\RR^{n+k}$ and a function $\tilde{R} (\epsilon)$ such that for all $\nu$,
\begin{equation}
\label{eqn:concaxial}
\int_{B_{\tilde{R} (\epsilon)} (z_\nu)} | u_\nu |^{p+1}\, d \Vol > \beta -
\epsilon, \ \forall \epsilon > 0.
\end{equation}
\end{proposition}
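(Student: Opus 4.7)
The plan is to deduce the proposition directly from the concentration-compactness trichotomy of Lemma \ref{lem:conccompman}, applied to the measures
\[
\mu_\nu(E) = \int_E |u_\nu|^{p+1}\, d\Vol,
\]
each of which has total mass $\beta$ by the constraint $J_p(u_\nu) = \beta$. By Lemma \ref{lem:conccompman}, after passing to a subsequence, exactly one of the three alternatives holds: vanishing, concentration, or splitting. The proposition will follow once I show that the only remaining possibility is concentration, in which case the sequence $\{y_\nu\}$ and function $R(\epsilon)$ produced by the lemma serve as the $\{z_\nu\}$ and $\tilde R(\epsilon)$ required in \eqref{eqn:concaxial}.

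First I would verify that $\{u_\nu\}$ is bounded in $H^1(\mathbb{R}^{n+k})$. This is immediate from \eqref{eqn:axialminseq} combined with $F_\lambda(u) \approx \|u\|_{H^1}^2$, which holds under the hypothesis $\lambda > 0$ (cf.~\eqref{eqn:axialpbds}). This boundedness is the hypothesis needed to invoke Lemma \ref{lem:axialnovanishing}.

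Next I would rule out vanishing. This is exactly the content of the Corollary following Lemma \ref{lem:axialnovanishing}: if \eqref{eqn:ccvan} held for $\mu_\nu$ as in \eqref{eqn:ccmeasaxial}, then in particular $\sup_z \int_{B_R(z)} |u_\nu|^{p+1}\, d\Vol \to 0$. By H\"older on balls of bounded volume this forces $\sup_z \int_{B_R(z)} |u_\nu|^2\, d\Vol \to 0$, so Lemma \ref{lem:axialnovanishing} applies with $r = p+1$ (permitted by \eqref{eqn:axialpbds}), giving $\|u_\nu\|_{L^{p+1}} \to 0$. This contradicts $J_p(u_\nu) = \beta > 0$.

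Then I would rule out splitting, which is precisely Lemma \ref{lem:nosplitaxial}; its proof rests on the strict subadditivity \eqref{eqn:subaddaxial} obtained from the scaling identity \eqref{eqn:IIscalingaxial}. With vanishing and splitting both excluded, the trichotomy forces alternative $(2)$ (concentration): there exist $\{z_\nu\} \subset \mathbb{R}^{n+k}$ and, for each $\epsilon > 0$, a radius $\tilde R(\epsilon) < \infty$ (both with the independence properties noted in the Remark after Lemma \ref{lem:conccompman}) such that \eqref{eqn:concaxial} holds. Since the two substantive ingredients (no vanishing and no splitting) are already in hand, there is no genuine obstacle here — the main task is simply to package the trichotomy correctly and match the notation in \eqref{eqn:concaxial}.
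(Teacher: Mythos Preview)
Your proposal is correct and follows exactly the paper's approach: the proposition is stated immediately after Lemmas \ref{lem:axialnovanishing} and \ref{lem:nosplitaxial} as their direct consequence via the trichotomy of Lemma \ref{lem:conccompman}, with no further argument given. Your write-up simply makes explicit the packaging that the paper leaves implicit.
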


\begin{remark}
The sequence $\{ z_\nu \}$ is independent of $\epsilon > 0$ and the function
$\tilde{R} (\epsilon)$ is independent of $\epsilon$.
\end{remark}

Proposition \ref{prop:concaxial} is about concentration along subsequences of
a minimizing sequence.  We use the group of $\RR^k$-translations to proceed
from there to compactness result.  In more detail, we have a sequence
$\{ u_\nu \} \subset H^1_\pi (\RR^{n+k})$ satisfying \eqref{eqn:axialminseq}.
After passing to a subsequence if necessary, Proposition \ref{prop:concaxial}
shows that we have points $z_\nu \in \RR^{n+k}$ and a function $\tilde{R}
(\epsilon)$ such that \eqref{eqn:concaxial} holds.  Now $z_\nu =
(x_\nu, y_\nu)$ with $x_\nu \in \RR^n$, $y_\nu \in \RR^k$, and
$y$-translation preserves $H^1_\pi (\RR^{n+k})$, so we can assume each
$y_\nu = 0$.  On the other hand, since $u_\nu \in H^1_\pi (\RR^{n+k})$,
\eqref{eqn:concaxial} implies a bound $| x_\nu| \leq K < \infty$, for all
$\nu$.

With these adjustments made, we now have
\begin{equation}
\label{eqn:conaxial1}
\int_{B_{\tilde{R} (\epsilon) + K} (0)} | u_\nu |^{p+1}\, d \Vol > \beta -
\epsilon, \quad \forall\, \epsilon > 0.
\end{equation}
as well as \eqref{eqn:wkconvaxial} and \eqref{eqn:Lpconvaxial}.  Hence,
\begin{equation}
\label{eqn:strconvaxial}
u_\nu \to u \ \text{in} \ L^{p+1} (\RR^{n+k})\text{-norm, and}
\ J_p (u) = \beta.
\end{equation}
Since $F_\lambda$ is comparable to the $H^1$-norm squared, we have
\begin{equation}
\label{eqn:H1eqFlambdaaxial}
F_\lambda (u) \leq \liminf_{ \nu \to \infty} F_\lambda (u_\nu) = \II (\beta,
\pi).
\end{equation}
Given \eqref{eqn:strconvaxial}, we have 
\begin{equation}
\label{eqn:H1eqFlambdaaxial1}
 F_\lambda (u) = \II (\beta, \pi).
\end{equation}
As a result, the following conclusion holds.

\begin{proposition}
\label{prop:axialFlammin}
In the setting of Proposition \ref{prop:concaxial}, $\{ u_\nu \}$ has a
subsequence satisfying \eqref{eqn:strconvaxial} with limit $u \in H^1_\pi
(\RR^{n+k})$, the desired $F_\lambda$ minimizer.
\end{proposition}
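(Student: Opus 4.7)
The plan is to extract from Proposition \ref{prop:concaxial} a concentration subsequence, then use the full symmetry group of $H^1_\pi(\RR^{n+k})$ (which is $G$ acting on the first $n$ coordinates together with $\RR^k$-translation on the last $k$) to localize the concentration inside a fixed compact set. At that point, the weak convergence \eqref{eqn:wkconvaxial}, Rellich's theorem \eqref{eqn:H1embaxial}, and the weak lower semicontinuity of $F_\lambda \approx \|\cdot\|_{H^1}^2$ will assemble the minimizer.

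The first step is a direct application of Proposition \ref{prop:concaxial}, producing (after passing to a further subsequence) points $z_\nu = (x_\nu, y_\nu) \in \RR^n \times \RR^k$ and a function $\tilde{R}(\epsilon)$ with $\int_{B_{\tilde{R}(\epsilon)}(z_\nu)} |u_\nu|^{p+1}\,d\Vol > \beta - \epsilon$. Since $\pi$ is trivial on $\RR^k$-translations, the translated sequence $u_\nu(\cdot + (0, y_\nu))$ still lies in $H^1_\pi(\RR^{n+k})$ and still satisfies \eqref{eqn:axialminseq}, so I would replace $u_\nu$ by this translate and assume $y_\nu = 0$ for all $\nu$. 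The second localization step, in the spirit of Lemma \ref{lem:rbounded}, is to show $|x_\nu| \le K < \infty$: if $|x_\nu|$ were unbounded along some subsequence, the transitivity of $G$ on each sphere in $\RR^n$ would let me choose $g_\nu \in G$ so that $g_\nu \cdot x_\nu$ is farther than $2\tilde{R}(\epsilon)$ from $x_\nu$; since $u_\nu$ is $\pi$-equivariant and $\pi$ is unitary, $|u_\nu|$ takes identical values at corresponding points, forcing the $L^{p+1}$ mass inside the disjoint rotated ball to also exceed $\beta - \epsilon$, contradicting $J_p(u_\nu) = \beta$ when $\epsilon < \beta/2$.

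With $|z_\nu| \le K$ established, the concentration inequality becomes \eqref{eqn:conaxial1} on the fixed ball $B_{\tilde{R}(\epsilon)+K}(0)$. Rellich's theorem applied to this relatively compact set, combined with the weak convergence $u_\nu \to u$ in $H^1_\pi(\RR^{n+k})$, yields $L^{p+1}$-norm convergence on $B_{\tilde{R}(\epsilon)+K}(0)$, and the tail bound $\int_{\RR^{n+k} \setminus B_{\tilde{R}(\epsilon)+K}(0)} |u_\nu|^{p+1}\,d\Vol < \epsilon$ upgrades this to strong $L^{p+1}(\RR^{n+k})$-convergence. In particular $J_p(u) = \beta$, and then $F_\lambda(u) \le \liminf_\nu F_\lambda(u_\nu) = \II(\beta, \pi)$ by the weak lower semicontinuity of the $H^1$-norm squared, while the defining infimum forces the reverse inequality, giving $F_\lambda(u) = \II(\beta, \pi)$.

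The main obstacle is the bound $|x_\nu| \le K$: this is where the axial setting genuinely uses the $G$-action, since ordinary translation in $\RR^{n+k}$ would not preserve $H^1_\pi$. Everything else is a routine combination of weak convergence, Rellich compactness, and the concentration output of Proposition \ref{prop:concaxial}, and closely parallels the spherical argument of Section \ref{sec:vortspherenmin}.
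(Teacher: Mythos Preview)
Your proposal is correct and follows essentially the same route as the paper: translate in the $\RR^k$ direction to set $y_\nu=0$, bound $|x_\nu|$ via the $G$-action (the paper asserts this bound in one line; your disjoint-ball argument \`a la Lemma~\ref{lem:rbounded} is exactly the intended justification), then combine \eqref{eqn:conaxial1}, Rellich, and weak lower semicontinuity of $F_\lambda$ to conclude. There is nothing to add.
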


\begin{remark}
It follows from \eqref{eqn:H1eqFlambdaaxial1} that convergence $u_\nu \to u$
in \eqref{eqn:wkconvaxial} holds in norm in $H^1_\pi (\RR^{n+k})$.
\end{remark}

Existence of energy minimizers and of Weinstein functional maximizers,
under appropriate constraints on the exponent $p$, can also be derived,
using a mixture of techniques developed in this section and techniques from
Sections \ref{sec:vortspherenmin}--\ref{sec:vortspherWmax}.
We leave the details to the interested reader.



\subsection{Reduced PDE for axial standing waves}
\label{sec:vortaxialPDE}

Here we assume $\pi$ is an irreducible representation of $G$ on $V$ and $V_0$
has the property
\begin{equation}
\label{eqn:dimV0axial}
\dim V_0 = 1.
\end{equation}
Then, an axial vortex $u \in H^1_\pi (\RR^{n+k})$ has the form
\begin{equation}
\label{eqn:urgp0axial}
u(r g \cdot p_0, y) = \psi (r,y) \pi (g)  \phi,
\end{equation}
with $\phi \in V_0$ a unit spanning vector and $\psi : \RR^+ \times \RR^k
\to \CC$.  If $u$ solves \eqref{eqn:nlbsaxial}, then parallel to
calculations in Section \ref{sec:vortspherODE}, we obtain for $\psi$ the PDE
\begin{equation}
\label{eqn:psiPDE}
\pa_r^2 \psi + \Delta_y \psi + \frac{n-1}{r} \pa_r \psi -
\frac{ \mu_\pi^2}{r^2} \psi - \lambda \psi + | \psi |^{p-1} \psi = 0,
\end{equation}
with $\mu_\pi^2 \in \RR^+$ as in \eqref{eqn:SSspec}.  We can write
\eqref{eqn:urgp0axial} as
\begin{equation}
\label{eqn:uxyaxial}
u(x,y) = u_0 (x,y) \pi (g)  \phi, \quad x = r g \cdot p_0,
\end{equation}
with 
\begin{equation}
\label{eqn:psiaxial}
u_0 (x,y) = \psi (|x|, y),
\end{equation}
and hence \eqref{eqn:psiPDE} is also equivalent to
\begin{equation}
\label{eqn:u0PDE}
\Delta u_0 - \lambda u_0 - \frac{ \mu_\pi^2}{|x|^2} u_0 + |u_0|^{p-1} u_0 = 0,
\end{equation}
for the scalar function $u_0$.  

Parallel to \eqref{eqn:sharpmap}, we can define a map $u \to u^\sharp$
on $H^1_\pi (\RR^{n+k})$:
\begin{equation}
\label{eqn:sharpmapaxial}
u^\sharp (r g \cdot p_0, y) = | \psi (r,y) | \pi (g) \phi,
\end{equation}
for $u$ as in \eqref{eqn:urgp0axial}.  We have for a.e. $z \in \RR^{n+k}$, 
\begin{equation}
\label{eqn:usharpbdsaxial}
| u^\sharp (z ) | = | u(z) |, \quad
| \nabla u^\sharp (z) | \leq | \nabla u (z) |.
\end{equation}
Hence,
\begin{equation}
\label{eqn:usharpbdsaxial1}
\| u^\sharp \|_{L^2} = \| u \|_{L^2}, \quad
\| u^\sharp \|_{L^{p+1}} = \| u \|_{L^{p+1}}, \quad
\| \nabla u^\sharp \|_{L^2} \leq \| \nabla u \|_{L^2}.
\end{equation}
Consequently, if $u$ is an $F_\lambda$-minimizer within $H^1_\pi (\RR^{n+k})$,
or an energy minimizer or Weinstein maximizer, so is $u^\sharp$.  This forces
$\| \nabla u^\sharp \|_{L^2} = \| \nabla u \|_{L^2}$, and hence
\begin{equation}
\label{eqn:nablasharpbdaxial}
| \nabla u^\sharp (z) | = | \nabla u (z) |,
\end{equation}
for a.e. $z \in \RR^{n+k}$.  We also have for
\begin{equation}
\label{eqn:u0sharpaxial}
u_0^\sharp (x,y) = | \psi (|x|, y)| = | u_0 (x,y)| = |u(x,y)|
\end{equation}
that
\begin{equation}
\label{eqn:u0PDEaxial}
\Delta u_0^\sharp - \lambda u_0^\sharp - \frac{\mu_\pi^2}{|x|^2} u_0^\sharp +
( u_0^\sharp )^p = 0,
\end{equation}
and $u_0^\sharp \geq 0$ on $\RR^{n+k} \setminus \{ x = 0 \}$.  Harnack's
inequality (see \cite{GiTr-book}, Theorem 8.20) then implies
\begin{equation}
\label{eqn:u0sharpharnackaxial}
u_0^\sharp (x,y) > 0, \quad \text{if} \ x \neq 0.
\end{equation}
Consequently,
\begin{equation}
\label{eqn:unon0axial}
u(x,y) \neq 0, \quad \text{if} \ x \neq 0.
\end{equation}

Returning to \eqref{eqn:u0sharpharnackaxial}, note that parallel to
\eqref{eqn:kinsphdecomp},
\begin{equation}
\label{eqn:kinsphdecompaxial}
\| \nabla u \|_{L^2}^2 = \| \nabla u_0 \|_{L^2}^2
+ \mu_\pi^2 \Bigl\|\frac{ u_0}{|x|} \Bigr\|_{L^2}^2.
\end{equation}
Similarly, we have
\begin{equation}
\label{eqn:kinsphdecompaxial1}
\| \nabla u^\sharp \|_{L^2}^2 = \| \nabla u_0^\sharp \|_{L^2}^2
+ \mu_\pi^2 \Bigl\|\frac{ u_0^\sharp }{|x|} \Bigr\|_{L^2}^2.
\end{equation}
Since $|u_0^\sharp (z) | = | u_0 (z) |$, and the left sides of
\eqref{eqn:kinsphdecompaxial} and \eqref{eqn:kinsphdecompaxial1} have been
seen to be equal, we deduce that $\| \nabla u_0 \|_{L^2}^2 = \| \nabla
u_0^\sharp \|_{L^2}^2$, and since $| \nabla u_0| \leq | \nabla u_0^\sharp
|$ a.e. on $\RR^{n+k}$, we hence have
\begin{equation}
\label{eqn:u0eqaxial} 
| \nabla u_0 (x,y) | = | \nabla u_0^\sharp (x,y) |, \quad \text{a.e. on} \
\RR^{n+k},
\end{equation}
or equivalently 
\begin{equation}
\label{eqn:psieqaxial} 
| \nabla \psi (r,y) | = | \nabla |\psi| (r,y) |, \quad \text{a.e. on} \ \RR^+
\times \RR^{k}.
\end{equation}
Writing
\begin{equation}
\label{eqn:psidecompaxial}
\psi = \omega | \psi|, \quad \omega : \RR^{n+k} \setminus \{ x = 0 \} \to S^1
\subset \CC,
\end{equation}
we see from \eqref{eqn:psieqaxial}  that $\omega$ is constant.  We hence have
the following extension of Proposition \ref{prop:psireal}.

\begin{proposition}
\label{prop:ODEaxial}
Let $u \in H^1_\pi (\RR^{n+k})$ be an $F_\lambda$-minimizer (or an energy
minimizer or a Weinstein functional maximizer).  Assume \eqref{eqn:dimV0axial}
holds.  Then, after multiplication by a complex constant, $u$ has the form
\eqref{eqn:urgp0axial} with
\begin{equation}
\label{eqn:axialpsimap}
\psi : \RR^{n+k}  \setminus \{ x = 0 \} \to (0, \infty).
\end{equation}
\end{proposition}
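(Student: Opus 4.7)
The plan is to deduce the proposition from a symmetrization argument combined with Harnack's inequality, essentially assembling the pieces laid out in the discussion preceding the statement. Since $\dim V_0 = 1$, pick a unit spanning vector $\phi \in V_0$, so every $u \in H^1_\pi (\RR^{n+k})$ is determined by a scalar $\psi : \RR^+ \times \RR^k \to \CC$ via \eqref{eqn:urgp0axial}, and the scalar companion $u_0(x,y) = \psi(|x|,y)$ satisfies the reduced PDE \eqref{eqn:u0PDE}. I will use the symmetrization $u \mapsto u^\sharp$ defined by $u^\sharp(r g \cdot p_0, y) = |\psi(r,y)| \pi(g) \phi$, which manifestly lies in $H^1_\pi (\RR^{n+k})$.

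The first step is to show $u^\sharp$ is again an extremizer. By \eqref{eqn:kinetic}, $|\nabla u^\sharp| \leq |\nabla u|$ a.e., while $|u^\sharp| = |u|$ pointwise; hence \eqref{eqn:usharpbdsaxial1} gives $\| u^\sharp\|_{L^2} = \|u\|_{L^2}$, $\| u^\sharp\|_{L^{p+1}} = \|u\|_{L^{p+1}}$, and $\|\nabla u^\sharp\|_{L^2} \leq \|\nabla u\|_{L^2}$. Since $u$ is an $F_\lambda$-minimizer (respectively, energy minimizer or Weinstein maximizer) and $u^\sharp$ satisfies the same constraint while doing no worse on the functional, $u^\sharp$ is also extremal. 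This forces equality throughout, so $|\nabla u^\sharp(z)| = |\nabla u(z)|$ for a.e. $z$.

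The second step is to promote the non-negative scalar $u_0^\sharp(x,y) = |\psi(|x|,y)|$ from non-negativity to strict positivity on $\{x \neq 0\}$. The function $u_0^\sharp$ satisfies the same elliptic equation \eqref{eqn:u0PDEaxial} as $u_0$ and is $\geq 0$. Away from the axis $\{x = 0\}$ the coefficient $\mu_\pi^2/|x|^2$ is smooth and bounded on compact sets, so the equation fits the framework of Harnack's inequality (Theorem 8.20 of \cite{GiTr-book}); this yields $u_0^\sharp(x,y) > 0$ whenever $x \neq 0$, hence $u(x,y) \neq 0$ there.

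The final step is to show that the phase of $\psi$ can be made constant. Comparing the two kinetic decompositions $\|\nabla u\|_{L^2}^2 = \|\nabla u_0\|_{L^2}^2 + \mu_\pi^2 \|u_0/|x|\|_{L^2}^2$ and the analogous identity for $u^\sharp$, together with $|u_0^\sharp| = |u_0|$ and $\|\nabla u^\sharp\|_{L^2} = \|\nabla u\|_{L^2}$, gives $\|\nabla u_0\|_{L^2} = \|\nabla u_0^\sharp\|_{L^2}$, and then the pointwise inequality $|\nabla u_0| \leq |\nabla u_0^\sharp|$ upgrades to equality a.e. Writing $\psi = \omega |\psi|$ on the open connected set $\{x \neq 0\}$ (connected because $n \geq 2$), the pointwise identity $|\nabla \psi| = |\nabla |\psi||$ combined with the decomposition $|\nabla \psi|^2 = |\nabla |\psi||^2 + |\psi|^2 |\nabla \omega|^2$ on $\{|\psi|>0\}$ forces $\nabla \omega = 0$ a.e., so $\omega$ is a single unimodular constant. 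Replacing $u$ by $\overline{\omega} u$ yields the desired positive $\psi$. The main obstacle is the last step: rigorously extracting constancy of the phase from the gradient-norm equality for a merely $H^1$ complex-valued scalar function; this is the classical fact used in radial symmetry arguments, which I would justify via the above identity applied on the open set $\{u_0^\sharp > 0\}$ made available by Harnack.
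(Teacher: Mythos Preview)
Your proof is correct and follows exactly the approach the paper takes in the discussion preceding the proposition: symmetrize via $u \mapsto u^\sharp$, use extremality to force $\|\nabla u^\sharp\|_{L^2} = \|\nabla u\|_{L^2}$, apply Harnack to the nonnegative $u_0^\sharp$ to get strict positivity off the axis, compare the kinetic decompositions \eqref{eqn:kinsphdecompaxial}--\eqref{eqn:kinsphdecompaxial1} to obtain $|\nabla \psi| = |\nabla |\psi||$ a.e., and conclude the phase $\omega$ is constant. One small correction: the Kato-type inequality reads $|\nabla u_0^\sharp| = |\nabla |u_0|| \leq |\nabla u_0|$, not the reverse as you wrote, though this does not affect the argument since equality of the $L^2$ norms already forces pointwise equality a.e.
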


\section{Mass Critical NLS with vortex initial data}
\label{sec:masscrit}

Here we discuss solutions to the mass critical NLS (see \eqref{eqn:NLSmc}
below), with vortex initial data (sometimes, with initial data of a more
general nature).  One result is that if the initial data $v_0$ belongs to
$H^1_\pi(\RR^n)$ and has mass $\|v_0\|^2_{L^2}$ less than that of the
corresponding Weinstein functional maximizer, then the solution exists
for all $t$.  This extends results of \cite{FG}, which deal with $v_0\in
H^1_\ell(\RR^2)$.  In fact, in Section \ref{s4.1}, we treat more general
$n$-dimensional Riemannian manifolds $M$, in cases where there is no
Weinstein functional maximizer.  Then the results have a more subtle
formulation.  See Propositions \ref{prop:NLSmcge} and \ref{p4.1.3}.
Section \ref{s4.2} gives some monotonicity results, regarding how the
Weinstein functional supremum $\mathcal{W}(\pi)$ depends on $\pi$, expanding
on some special cases from Section \ref{sec:vortspherWmax}.
Section \ref{sec:vortscat} investigates scattering of solutions to mass
critical NLS with vortex initial data.

\subsection{General global existence result}
\label{s4.1}

We begin with a general global existence result. Let us consider the
mass-critical, focusing, nonlinear Schr\"odinger equation
\begin{equation}
\label{eqn:NLSmc}
i \pa_t v + \Delta v + |v |^{p-1} v = 0, \quad p = 1 + \frac{4}{n},
\end{equation}
on $I \times M$, where $M$ is an $n$-dimensional Riemannian manifold,
possibly with boundary.  We take initial data
\begin{equation}
\label{eqn:NLSmcid}
v(0) = v_0 \in H^1 (M)\quad (\text{or }\ H^1_0(M)).
\end{equation}
We make the following
\newline
{\bf Hypothesis.} The initial value problem
\eqref{eqn:NLSmc}--\eqref{eqn:NLSmcid} is locally well posed.
In particular, it has a
unique solution in $C(I,H^1(M))$ on a time interval $I$ that is a function of
$\|v_0\|_{H^1}$.

\begin{remark} 
\label{rem:H1pilwp}
We pause to mention some cases in which this hypothesis is known to hold.
First is the classical case $M=\RR^n$, where local well posedness for
$v_0\in H^1$ was established by \cite{GiVe} for the NLS \eqref{eqn:nls}
whenever $1<p<1+4/(n-2)$.  A proof of this can be found on pp.~93--98 of
\cite{Caz_book}.  (Further work treated also the energy-critical exponent
$p=1+4/(n-2)$, for $n\ge 3$.)  As noted by \cite{Ban} (p.~1647), the classical
Strichartz estimates used in this proof apply also to $M=\HH^n$, hyperbolic
space, and one also has such local well posedness of \eqref{eqn:nls} with
$v_0\in H^1(\HH^n)$.  See also \cite{AP}.  Results in case
\beq
M=\RR^n\setminus K,
\label{4.1.K}
\eeq
when $K$ is a compact, smoothly bounded, strongly convex obstacle, and the
Dirichlet boundary condition is given on $\pa K$, are obtained in \cite{Iv}.
It is shown that all the classical Strichartz estimates, except for the
endpoint case, continue to hold.  Again, the arguments given on pp.~93--98
of \cite{Caz_book} can be used to establish well posedness of \eqref{eqn:nls}
for initial data in $H^1_0(M)$, whenever $1<p<1+4/(n-2)$.  For application
to our vortex setting, note that we can take $K=B$, a ball in $\RR^n$.
(In \cite{Iv}, the energy critical case $n=3,p=5$ is also treated;
see \cite{KiViZh} for a treatment in higher dimensions.)
Going further, \cite{BuGeTz} considers
$M$ as in \eqref{4.1.K} for smoothly bounded, nontrapping obstacles $K\subset
\RR^n$.  These authors derived Strichartz estimates, with a loss.  This led to
local well posedness for initial data in $H^1_0(M)$, for a more restricted
class of NLS equations.  However, as shown on pp.~309--312 of \cite{BuGeTz},
their main Strichartz estimate does yield well posedness, with initial data
in $H^1_0(M)$, for such $M$, for a class of equations that contains
\eqref{eqn:NLSmc} in dimensions $n=2$ and $n=3$.
\end{remark}

Returning to our main line of inquiry, we have, for the local solution to
\eqref{eqn:NLSmc}--\eqref{eqn:NLSmcid},
conservation of mass
\begin{equation}
\label{eqn:NLSmcMass}
Q( v(t) )  = \| v(t) \|_{L^2}^2,
\end{equation}
and of energy
\begin{equation}
\label{eqn:NLSmcEnergy}
E( v(t) )  = \frac12 \| \nabla v(t) \|_{L^2}^2 - \frac{1}{p+1} \int_M
|v(t)|^{p+1}\, d \Vol.
\end{equation}
We will assume there is a Gagliardo-Nirenberg estimate
\begin{equation}
\label{eqn:NLSmcGN}
\| u \|_{L^{p+1}} \leq C_M \| u \|_{L^2}^{1-\gamma} \| \nabla u
\|_{L^2}^\gamma, \quad \forall\, u \in H^1 (M),
\end{equation}
where, with $p$ as in \eqref{eqn:NLSmc},
\begin{equation}
\label{eqn:NLSmcGammadef}
\gamma = \frac{n}{2} - \frac{n}{p+1} = \frac{2}{p+1} = \frac{n}{n+2}.
\end{equation}
It is classical that \eqref{eqn:NLSmcGN} holds if $M = \RR^n$.
It also holds for $M = \HH^n$ and when $M$ is a compact and connected,
with non-empty boundary (and one insists $u \in H^1_0 (M))$.  The classic
paper \cite{W} shows that for $M = \RR^n$ if
\begin{equation}
\label{eqn:Weinstein}
v_0 \in H^1 (M) \ \text{and}  \ \| v_0 \|_{L^2} < \left(
\frac{p+1}{2C_M^{p+1}} \right)^{{1}/({p-1})},
\end{equation}
then \eqref{eqn:NLSmc}--\eqref{eqn:NLSmcid} has a global solution.
Here we note a related result, generalizing both the global existence result
just mentioned and that for planar vortex initial data in \cite{FG}.

Here is our setting.  Take
\begin{equation}
\label{eqn:vortsubspace}
\calH \subset H^1(M,V), \ \text{a closed linear subspace},
\end{equation}
where $V$ is a finite dimensional inner product space.  Assume that if $v_0
\in \calH$, then the short time solution to
\eqref{eqn:NLSmc}--\eqref{eqn:NLSmcid} has the property that $v(t) \in \calH$.
We move from \eqref{eqn:NLSmcGN} to the hypothesis
\begin{equation}
\label{eqn:NLSmcGNcalH}
\| u \|_{L^{p+1}} \leq C_{\calH} \| u \|_{L^2}^{1-\gamma} \| \nabla u
\|_{L^2}^\gamma, \quad \forall\, u \in \calH,
\end{equation}
with $p$ as in \eqref{eqn:NLSmc} and $\gamma$ as in \eqref{eqn:NLSmcGammadef},
noting that we might well have
\begin{equation}
\label{eqn:CHltCM}
C_\calH < C_M.
\end{equation}
Here is the global existence result.

\begin{proposition}
\label{prop:NLSmcge}
With $p$, $\gamma$ and $\calH$ as above, assume \eqref{eqn:NLSmcGNcalH} holds.
If $v_0 \in \calH$ satisfies
\begin{equation}
\label{eqn:NLSmcidubd}
\| v_0 \|_{L^2} < \left( \frac{p+1}{2 C_\calH^{p+1} } \right)^{{1}/({p-1})},
\end{equation}
then \eqref{eqn:NLSmc}--\eqref{eqn:NLSmcid} has a solution for all $t \in \RR$.
\end{proposition}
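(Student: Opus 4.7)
The plan is to follow Weinstein's classical a priori estimate argument, adapted to the restricted function class $\mathcal{H}$. The key observation is that for $p = 1 + 4/n$, the Gagliardo-Nirenberg exponent $\gamma = n/(n+2)$ from \eqref{eqn:NLSmcGammadef} satisfies the critical relations
\[
\gamma(p+1) = 2, \qquad (1-\gamma)(p+1) = p-1,
\]
so raising \eqref{eqn:NLSmcGNcalH} to the $(p+1)$-st power yields, for every $u \in \mathcal{H}$,
\[
\|u\|_{L^{p+1}}^{p+1} \leq C_\mathcal{H}^{p+1}\,\|u\|_{L^2}^{p-1}\,\|\nabla u\|_{L^2}^{2}.
\]
This is the structural ingredient that makes the mass-critical case tractable: the nonlinear term in the energy is bounded by a quantity linear in $\|\nabla u\|_{L^2}^2$, with a coefficient that depends only on the (conserved) mass.

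Next I would exploit conservation of mass \eqref{eqn:NLSmcMass} and energy \eqref{eqn:NLSmcEnergy} along the short-time solution $v(t) \in \mathcal{H}$ (whose persistence in $\mathcal{H}$ is assumed). Rewriting the energy identity as
\[
\tfrac{1}{2}\|\nabla v(t)\|_{L^2}^{2} = E(v_0) + \tfrac{1}{p+1}\|v(t)\|_{L^{p+1}}^{p+1},
\]
and bounding the last term by the sharpened Gagliardo-Nirenberg inequality together with $\|v(t)\|_{L^2} = \|v_0\|_{L^2}$, I obtain
\[
\Bigl(\tfrac{1}{2} - \tfrac{C_\mathcal{H}^{p+1}}{p+1}\|v_0\|_{L^2}^{p-1}\Bigr)\|\nabla v(t)\|_{L^2}^{2} \leq E(v_0).
\]
Hypothesis \eqref{eqn:NLSmcidubd} is precisely the condition that the coefficient on the left is strictly positive, giving the a priori bound
\[
\|\nabla v(t)\|_{L^2}^{2} \leq \frac{E(v_0)}{\tfrac{1}{2} - \tfrac{C_\mathcal{H}^{p+1}}{p+1}\|v_0\|_{L^2}^{p-1}},
\]
uniformly in $t$ within the interval of existence.

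Finally, combining this gradient bound with mass conservation produces a uniform $H^1$ bound on $v(t)$. By the local well-posedness hypothesis, the lifespan of a solution depends only on $\|v_0\|_{H^1}$, so this uniform bound allows the solution to be continued past any finite time by restarting the local theory. Iterating gives a solution for all $t \in \mathbb{R}$.

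The argument is essentially routine given \eqref{eqn:NLSmcGNcalH}; the only non-trivial ingredients are the well-posedness hypothesis (granted in the statement, and justified in the cases discussed in Remark \ref{rem:H1pilwp}) and the invariance of $\mathcal{H}$ under the flow, which is assumed. The mild technical point is that, in applications to vortex subspaces $\mathcal{H} = H^1_\pi(M)$, the constant $C_\mathcal{H}$ can be strictly smaller than $C_M$ (cf.~\eqref{eqn:CHltCM}), so the threshold \eqref{eqn:NLSmcidubd} may genuinely exceed the classical Weinstein threshold — this is the reason the result improves on \cite{W} in the vortex setting.
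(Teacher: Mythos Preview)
Your proof is correct and follows essentially the same approach as the paper: both rewrite the energy identity to isolate $\tfrac{1}{2}\|\nabla v(t)\|_{L^2}^2$, apply the Gagliardo--Nirenberg inequality \eqref{eqn:NLSmcGNcalH} in $\mathcal{H}$ together with mass and energy conservation, and observe that hypothesis \eqref{eqn:NLSmcidubd} makes the coefficient $\tfrac{1}{2} - \tfrac{C_\mathcal{H}^{p+1}}{p+1}\|v_0\|_{L^2}^{p-1}$ strictly positive, yielding a uniform gradient bound and hence global continuation. Your explicit verification of the exponent relations $\gamma(p+1)=2$ and $(1-\gamma)(p+1)=p-1$ is a helpful addition.
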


\begin{proof}
We follow Weinstein's classic argument from \cite{W}.  If $ v \in C(I, H^1(M,V))$
solves \eqref{eqn:NLSmc}, it suffices to control $\| \nabla v \|_{L^2}^2$.
The stated hypotheses give $v \in C (I, \calH)$, and then
\eqref{eqn:NLSmcEnergy} and \eqref{eqn:NLSmcGNcalH} give
\beq
\aligned
\frac12 \| \nabla v (t) \|_{L^2}^2 & = E (v(t)) + \frac{1}{p+1} \| v(t)
\|_{L^{p+1}}^{p+1}  \\
& \leq E(v(t)) + \frac{ C_\calH^{p+1}}{p+1} \| v(t) \|_{L^2}^{p-1} \| \nabla v(t)
\|_{L^2}^2  \\
& =E(v_0) + \frac{ C_\calH^{p+1}}{p+1} \| v_0 \|_{L^2}^{p-1} \| \nabla v(t)
\|_{L^2}^2  . 
\endaligned
\label{eqn:NLSmcgradeqE}
\eeq
Now, \eqref{eqn:NLSmcidubd} is equivalent to
\begin{equation}
\label{eqn:NLSmcL2bdcon}
\frac{C_\calH^{p+1}}{p+1} \| v_0 \|_{L^2}^{p-1} = \sigma < \frac12,
\end{equation}
which gives
\begin{equation}
\label{eqn:NLSmcgradbdE}
\left( \frac12 - \sigma \right) \| \nabla v (t) \|_{L^2}^2 \leq E( v_0 ),
\end{equation}
and hence the desired upper bound holds on $\| \nabla v (t) \|_{L^2}^2$.  
\end{proof}

Still following \cite{W}, we relate the material above to the behavior of the
Weinstein functional
\begin{equation}
\label{eqn:NLSmcW}
W (u) = \frac{  \| u \|_{L^{p+1}}^{p+1} }{ \| u \|_{L^2}^\alpha \| \nabla u
\|_{L^2}^\beta}, \ \ \alpha = p-1, \ \beta = 2,
\end{equation}
with $p$ as in \eqref{eqn:NLSmc}.  Note that \eqref{eqn:NLSmcGNcalH} holds with
\begin{equation}
\label{eqn:CdefW}
C_\calH^{p+1}  = \sup \{ W(u) \ : \ 0 \neq u \in \calH \}.
\end{equation}
As shown in \cite{W}, when $\calH = H^1 (\RR^n)$, and in this paper for
certain cases $\calH = H^1_\pi (\RR^n)$, there are cases when $W(u)$ can
achieve a maximum.  As in \eqref{eqn:dWdtau}--\eqref{eqn:lamKdefW}, such a
maximizer solves
\begin{equation}
\label{eqn:NLSmcEllip}
\Delta u - \lambda u + K |u|^{p-1} u = 0, 
\end{equation}
with
\begin{equation}
\label{eqn:NLSmcKdef}
K = \frac{p+1}{\beta} \frac{ \| \nabla u \|_{L^2}^2}{ \| u
\|_{L^{p+1}}^{p+1}}.
\end{equation}
Multiplying $u$ by a constant achieves $K =1$, and as a result $v(t,x) =
e^{i \lambda t} u(x)$ is a standing wave solution to \eqref{eqn:NLSmc}.
We call such a $u$ a ground state.  In such a case, we have
\begin{equation}
\label{eqn:NLSmcLpH1eq}
\| u \|_{L^{p+1}}^{p+1} = \frac{p+1}{2} \| \nabla u \|_{L^2}^2.
\end{equation}
Comparison with $W(u) = C_\calH^{p+1}$, or equivalently
\begin{equation}
\label{eqn:NLSmcshaprGNcalH}
\| u \|_{L^{p+1}}^{p+1} = C_\calH^{p+1} \| u \|_{L^2}^{p-1} \| \nabla u
\|_{L^2}^2,
\end{equation}
gives
\begin{equation}
\label{eqn:NLSmcuL2eqpcon}
\| u \|_{L^2} = \left( \frac{p+1}{2 C_\calH^{p+1}} \right)^{{1}/({p-1})}.
\end{equation}

Thus, we have the following consequence of Proposition \ref{prop:NLSmcge}.

\begin{proposition}
\label{prop:NLSmcge1}
In the setting of Proposition \ref{prop:NLSmcge}, assume
$u_{\Cal{H}} \in \calH$
maximizes $W(u)$ in \eqref{eqn:CdefW} and normalize $u_{\Cal{H}}$ to solve
\eqref{eqn:NLSmcEllip} with $K = 1$.  If $v_0 \in \calH$ satisfies
\begin{equation}
\label{eqn:NLSmcL2bdid}
\| v_0 \|_{L^2} < \| u_{\Cal{H}} \|_{L^2},
\end{equation}
then \eqref{eqn:NLSmc}--\eqref{eqn:NLSmcid} has a solution for all $t \in \RR$.
\end{proposition}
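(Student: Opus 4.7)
The plan is to show that this is essentially a repackaging of Proposition \ref{prop:NLSmcge} using the identity \eqref{eqn:NLSmcuL2eqpcon} that was already derived in the text preceding the statement. Concretely, once we know that a ground state $u_{\Cal{H}}$ (a maximizer of $W$ on $\Cal{H}$, normalized so that $K=1$ in \eqref{eqn:NLSmcEllip}) satisfies
\[
\| u_{\Cal{H}} \|_{L^2} = \left( \frac{p+1}{2 C_\Cal{H}^{p+1}} \right)^{1/(p-1)},
\]
the bound \eqref{eqn:NLSmcL2bdid} is literally identical to the hypothesis \eqref{eqn:NLSmcidubd} of Proposition \ref{prop:NLSmcge}, so that proposition directly yields global existence in $t\in\RR$.

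First I would verify the identity \eqref{eqn:NLSmcuL2eqpcon}. The Pohozaev-style computation goes as follows: pair \eqref{eqn:NLSmcEllip} (with $K=1$) against $u_{\Cal{H}}$ in $L^2$ to obtain $\|\nabla u_{\Cal{H}}\|_{L^2}^2 + \lambda \|u_{\Cal{H}}\|_{L^2}^2 = \|u_{\Cal{H}}\|_{L^{p+1}}^{p+1}$. The normalization $K=1$ combined with the formula for $K$ in \eqref{eqn:NLSmcKdef} (with $\beta=2$) gives exactly \eqref{eqn:NLSmcLpH1eq}, namely $\|u_{\Cal{H}}\|_{L^{p+1}}^{p+1} = \tfrac{p+1}{2}\|\nabla u_{\Cal{H}}\|_{L^2}^2$. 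On the other hand, since $u_{\Cal{H}}$ is a maximizer of $W$, we have $W(u_{\Cal{H}}) = C_\Cal{H}^{p+1}$, i.e.\ the sharp version \eqref{eqn:NLSmcshaprGNcalH} of the Gagliardo-Nirenberg inequality holds with equality. Dividing one of these two identities by the other eliminates $\|\nabla u_{\Cal{H}}\|_{L^2}^2$ and yields \eqref{eqn:NLSmcuL2eqpcon}.

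With the identity in hand, the proof concludes in one line: the hypothesis $\|v_0\|_{L^2}<\|u_{\Cal{H}}\|_{L^2}$ becomes the hypothesis $\|v_0\|_{L^2} < \left(\tfrac{p+1}{2 C_\Cal{H}^{p+1}}\right)^{1/(p-1)}$, so Proposition \ref{prop:NLSmcge} applies and produces a global-in-time solution to \eqref{eqn:NLSmc}--\eqref{eqn:NLSmcid}. There is essentially no obstacle; the only thing to be careful about is ensuring that $u_{\Cal{H}}$ really does lie in $\Cal{H}$ (which is built into the hypothesis) and that the short-time well-posedness theory (the Hypothesis preceding Proposition \ref{prop:NLSmcge}) preserves the subspace $\Cal{H}$, which was assumed at the beginning of this subsection when $\Cal{H}$ was introduced in \eqref{eqn:vortsubspace}.
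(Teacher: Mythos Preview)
Your proposal is correct and follows exactly the paper's approach: the identity \eqref{eqn:NLSmcuL2eqpcon} is derived in the text immediately preceding the proposition (from \eqref{eqn:NLSmcKdef} with $K=1$, $\beta=2$, together with the sharp Gagliardo--Nirenberg equality \eqref{eqn:NLSmcshaprGNcalH}), after which the proposition is stated simply as a consequence of Proposition \ref{prop:NLSmcge}. The pairing computation you mention is harmless but unnecessary, since \eqref{eqn:NLSmcLpH1eq} follows directly from setting $K=1$ in \eqref{eqn:NLSmcKdef}.
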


\begin{remark}
As noted in Section 4.3 of \cite{CMMT}, for many manifolds $M$ there are no
Weinstein functional maximizers.  Hence, there are many settings where
Proposition \ref{prop:NLSmcge} applies but Proposition \ref{prop:NLSmcge1}
does not.
\end{remark}

\begin{remark}\label{r4.1.5}
We set
\beq
u_{\Cal{H}}=Q_\pi\ \text{ if }\ \Cal{H}=H^1_\pi(\Rr^n).
\label{4.1.24A}
\eeq
Note that
\beq
\Cal{H}=H^1_\pi(\Rr^n)\Longrightarrow \Cal{C}_{\Cal{H}}^{p+1}=\Cal{W}(\pi),
\eeq
so
\beq
\|Q_\pi\|_{L^2}=\Bigl(\frac{p+1}{2\Cal{W}(\pi)}\Bigr)^{1/(p-1)}.
\eeq
\end{remark}

$\text{}$

It is perhaps illuminating to complement
Propositions \ref{prop:NLSmcge}--\ref{prop:NLSmcge1} with the
following result.  Assume $\Cal{H}\subset H^1(M)$ and
$u_{\Cal{H}}\in \Cal{H}$
satisfy the conditions of Proposition \ref{prop:NLSmcge1}.
Let $\widetilde{M}$ be another
complete $n$-dimensional Riemannian manifold and $\wcH\subset H^1(\widetilde{M})$
a closed linear subspace such that if $\tilde{v}_0\in\wcH$ then
\eqref{eqn:NLSmc} has a solution on $I\times\widetilde{M}$, satisfying
\beq
v(0)=\tilde{v}_0,
\label{4.1.24}
\eeq
on an interval whose length depends on $\|\tilde{v}_0\|_{H^1}$, and with
$v(t)\in\wcH$ for $t\in I$.  We continue to take $p=1+4/n$, and define
$W(u)$ as in \eqref{eqn:NLSmcW}.  We define $\Cal{C}_{\wcH}$ by
\beq
\Cal{C}_{\wcH}^{p+1}=\sup \{W(u):0\neq u\in\wcH\}.
\label{4.1.25}
\eeq

\begin{proposition} \label{p4.1.3}
Take $\Cal{H},\ u_{\Cal{H}}\in\Cal{H}$ as in Proposition \ref{prop:NLSmcge1}.
Take $\widetilde{M}$ and $\wcH\subset H^1(\widetilde{M})$ as above.  Assume
\beq
\Cal{C}_{\wcH}=\Cal{C}_{\Cal{H}}.
\label{4.1.26}
\eeq
If $\tilde{v}_0\in\wcH$ satisfies
\beq
\|\tilde{v}_0\|_{L^2(\widetilde{M})}<\|u_{\Cal{H}}\|_{L^2(M)},
\label{4.1.27}
\eeq
then \eqref{eqn:NLSmc}, \eqref{4.1.24} has a solution in $\wcH$
for all $t\in\RR$.
\end{proposition}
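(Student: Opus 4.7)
The plan is to observe that Proposition \ref{p4.1.3} is essentially a corollary of Proposition \ref{prop:NLSmcge}, applied to the pair $(\widetilde{M},\widetilde{\Cal{H}})$ rather than $(M,\Cal{H})$. The hypothesis \eqref{4.1.26} is precisely what is needed to convert the $L^2$-bound \eqref{4.1.27}, which is expressed in terms of the \emph{ground state} $u_{\Cal{H}}$ on the original manifold, into the Weinstein-constant bound \eqref{eqn:NLSmcidubd} on $\widetilde{M}$.

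More concretely, I would first note that, since $u_{\Cal{H}}\in\Cal{H}$ is a normalized Weinstein maximizer (so that $K=1$ in \eqref{eqn:NLSmcEllip}), the computation leading to \eqref{eqn:NLSmcuL2eqpcon} yields
\beq
\|u_{\Cal{H}}\|_{L^2(M)}=\Bigl(\frac{p+1}{2\Cal{C}_{\Cal{H}}^{p+1}}\Bigr)^{1/(p-1)}.
\label{eqn:uHL2}
\eeq
Using hypothesis \eqref{4.1.26}, this identity rewrites as
\beq
\|u_{\Cal{H}}\|_{L^2(M)}=\Bigl(\frac{p+1}{2\Cal{C}_{\wcH}^{p+1}}\Bigr)^{1/(p-1)},
\eeq
so the bound \eqref{4.1.27} on $\tilde{v}_0$ is equivalent to
\beq
\|\tilde{v}_0\|_{L^2(\widetilde{M})}<\Bigl(\frac{p+1}{2\Cal{C}_{\wcH}^{p+1}}\Bigr)^{1/(p-1)}.
\eeq
This is exactly the small-mass condition \eqref{eqn:NLSmcidubd} required to apply Proposition \ref{prop:NLSmcge} in the setting $(\widetilde{M},\wcH)$, provided the Gagliardo-Nirenberg inequality \eqref{eqn:NLSmcGNcalH} holds with constant $\Cal{C}_{\wcH}$ on $\wcH$.

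The last point is immediate from the definition \eqref{4.1.25} of $\Cal{C}_{\wcH}$: unpacking $W(u)\leq \Cal{C}_{\wcH}^{p+1}$ for $u\in\wcH$ yields precisely
\beq
\|u\|_{L^{p+1}}\leq \Cal{C}_{\wcH}\|u\|_{L^2}^{1-\gamma}\|\nabla u\|_{L^2}^\gamma,\qquad \gamma=\frac{n}{n+2},
\eeq
which is \eqref{eqn:NLSmcGNcalH} on $\widetilde{M}$. With this in hand, the energy-conservation argument in the proof of Proposition \ref{prop:NLSmcge} applies verbatim (using local well-posedness in $\wcH$, conservation of $Q$ and $E$, and the Gagliardo-Nirenberg bound to control $\|\nabla v(t)\|_{L^2}^2$), and extends the solution $v(t)$ to all $t\in\RR$.

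I do not foresee a real obstacle: the only mildly delicate step is the compatibility of all the structural inputs (local well-posedness in $\wcH$, invariance of $\wcH$ under the flow, conservation laws on $\widetilde{M}$), but these are built into the hypotheses surrounding \eqref{4.1.24}--\eqref{4.1.25}. The proposition therefore reduces, once \eqref{eqn:uHL2} is invoked, to citing Proposition \ref{prop:NLSmcge}.
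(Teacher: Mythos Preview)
Your proposal is correct and follows essentially the same approach as the paper: apply Proposition~\ref{prop:NLSmcge} to the pair $(\widetilde{M},\wcH)$, then use the identity \eqref{eqn:NLSmcuL2eqpcon} together with the hypothesis $\Cal{C}_{\wcH}=\Cal{C}_{\Cal{H}}$ to translate the mass bound \eqref{4.1.27} into the required condition \eqref{eqn:NLSmcidubd}. The paper's proof is just a terser version of what you wrote.
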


\begin{proof}
Proposition \ref{prop:NLSmcge} implies \eqref{eqn:NLSmc}, (\ref{4.1.24})
has a global solution provided
\beq
\|\tilde{v}_0\|_{L^2(\widetilde{M})}<
\Bigl(\frac{p+1}{2\Cal{C}_{\wcH}^{p+1}}\Bigr)^{1/(p-1)}.
\label{4.1.28}
\eeq
Given (\ref{4.1.26}), the conclusion follows from the identity
\eqref{eqn:NLSmcuL2eqpcon}.
\end{proof}

\sk
{\sc Example.} If $B\subset\RR^n$ is a smoothly bounded set, take
\beq
M=\RR^n,\quad \widetilde{M}=\RR^n\setminus B,\quad \Cal{H}=H^1(\RR^n),\quad
\wcH=H^1_0(\RR^n\setminus B).
\label{4.1.29}
\eeq
It is shown in \S{4.3} of \cite{CMMT} that (\ref{4.1.26}) holds,
but there is no $W$-maximizer in $\wcH$.
Hence the condition (\ref{4.1.27}) for global solvability
applies, but this is not a corollary of Proposition \ref{prop:NLSmcge1}.

\begin{remark}
Stimulated by \cite{Dod}, one might consider extending
Proposition \ref{p4.1.3} as follows.  Take $\tilde{v}_0$ in the $L^2$-closure
of $\widetilde{\Cal{H}}$, satisfying \eqref{4.1.27}, and ask for global
solvability.  We do not tackle this here.
\end{remark}

\subsection{Further monotonicity results}
\label{s4.2}

Here we extend the monotonicity results
\eqref{eqn:monpi} and (\ref{W.26}) from the setting
of $M=\RR^n$ (and spherical vortices) to more general settings.  We begin
by extending \eqref{eqn:kinsphdecomp},
working in the setting of spherical vortices.
Thus, we assume $M$ is as in
\eqref{eqn:sphereM}--\eqref{eqn:metric}, and has rotational symmetry.
We also assume
\beq
\text{$\pi$ is irreducible on $V$.}
\label{M.1}
\eeq
In such a case, \eqref{eqn:LapEucAalt} gives
\beq
u\in H^1_\pi(M)\Rightarrow \Delta u=\pa_r^2u+\frac{A'(R)}{A(r)}\pa_r u
-\mu_\pi(r)^2u.
\label{M.2}
\eeq
We also assume
\beq
\dim V_0=1,
\label{M.3}
\eeq
with $V_0$ as in \eqref{eqn:V0}.
Then there exists a unit $\varphi$, spanning $V_0$,
such that
\beq
u(r,g\cdot p_0)=\psi(r)\pi(g)\varphi,\quad \psi:\RR^+\rightarrow \CC,
\label{M.4}
\eeq
and
\beq
u_0(r,g\cdot p_0)=\psi(r)\Longrightarrow u_0\in H^1_r(M),
\label{M.5}
\eeq
with
\beq
\|u\|^2_{L^2}=\|u_0\|^2_{L^2},
\label{M.6}
\eeq
and
\beq
\aligned
\|\nabla u\|^2_{L^2}
&=\|\nabla u_0\|^2_{L^2}-\int_0^\infty (\Delta_{h(r)}u,u) A(r)\, dr \\
&=\|\nabla u_0\|^2_{L^2}+\int_0^\infty \int\limits_{S^{n-1}} |u_0|^2\,dS(\omega)
\, \mu_\pi(r)^2 A(r)\, dr \\
&=\|\nabla u_0\|^2_{L^2}+\int\limits_M |u_0|^2\mu_\pi(r)^2\, dV.
\endaligned
\label{M.7}
\eeq

As noted in \eqref{eqn:hdecomp}--\eqref{eqn:sigmaODE},
if $G=SO(n)$ ($n=\dim M$), then
\beq
h(r)=\sigma(r)\, h_S,
\label{M.8}
\eeq
where $h_S$ is the standard metric on $S^{n-1}$ and $\sigma:I\rightarrow
(0,\infty)$, hence
\beq
\mu_\pi(r)=\frac{\mu_\pi}{\sigma(r)},
\label{M.8A}
\eeq
and we get
\beq
\|\nabla u\|^2_{L^2}=\|\nabla u_0\|^2_{L^2}
+\mu_\pi^2 \Bigl\|\frac{u_0}{\sigma}\Bigr\|^2_{L^2},
\label{M.9}
\eeq
which specializes to \eqref{eqn:kinsphdecomp}
when $M=\RR^n$, in light of \eqref{eqn:hdecomp}.  In such a
case, parallel to \eqref{eqn:Winv}, (\ref{W.24}), and (\ref{W.25}), we have
\beq
\frac{1}{\Cal{W}(\pi)}=\inf\Bigl\{
\frac{\|u_0\|^\alpha (\|\nabla u_0\|^2_{L^2}+\mu_\pi^2\|u_0/\sigma\|^2_{L^2}
)^{\beta/2}}{\|u_0\|_{L^{p+1}}^{p+1}}
:0\neq u_0\in H^1_r(M)\Bigr\},
\label{M.10}
\eeq
\beq
\Cal{I}(\beta,\pi)=\inf \Bigl\{F_\lambda(u_0)+\mu_\pi^2
\Bigl\|\frac{u_0}{\sigma}\Bigr\|^2_{L^2}:u_0\in H^1_r(M),\
J_p(u_0)=\beta\Bigr\},
\label{M.11}
\eeq
and
\beq
\Cal{E}(\beta,\pi)=\inf \Bigl\{E(u_0)+\frac{\mu_\pi^2}{2}
\Bigl\|\frac{u_0}{\sigma}\Bigr\|^2_{L^2}:u_0\in H^1_r(M),\
\|u_0\|^2_{L^2}=\beta\Bigr\},
\label{M.12}
\eeq
under appropriate hypotheses on $p$.  We then have the following.

\begin{proposition} \label{pm.1}
Let $M$ be as in \eqref{eqn:sphereM}--\eqref{eqn:metric}, with rotational
symmetry, and assume (\ref{M.8}) holds.
Let $\pi_1$ and $\pi_2$ be two unitary representations of $G$,
satisfying (\ref{M.1}) and (\ref{M.3}).
Then, under appropriate hypotheses on $p$,
\beq
\aligned
\mu_{\pi_2}^2>\mu_{\pi_1}^2\Longrightarrow\
&\Cal{W}(\pi_2)<\Cal{W}(\pi_1), \\
&\Cal{I}(\beta,\pi_2)>\Cal{I}(\beta,\pi_1), \\
&\Cal{E}(\beta,\pi_2)>\Cal{E}(\beta,\pi_1).
\endaligned
\label{M.13}
\eeq
\end{proposition}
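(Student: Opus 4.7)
The approach is to exploit the formulas (\ref{M.10}), (\ref{M.11}), and (\ref{M.12}), which express $1/\mathcal{W}(\pi)$, $\mathcal{I}(\beta,\pi)$, and $\mathcal{E}(\beta,\pi)$ as infima over $u_0 \in H^1_r(M)$ of functionals in which $\pi$ enters only through a term proportional to $\mu_\pi^2 \|u_0/\sigma\|_{L^2}^2$. Since $\sigma(r) > 0$ on $I$, this extra term is strictly positive whenever $u_0 \not\equiv 0$, and for any nonzero admissible $u_0$ the functional is strictly increasing in $\mu_\pi^2$. The strategy is to convert this pointwise strict monotonicity into strict monotonicity of the infima by evaluating the smaller-$\mu$ functional at a minimizer attained at the larger $\mu$.

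First I would establish the existence of a minimizer for each of the three infima at $\pi_2$, under the respective hypotheses on $p$. In the $\mathcal{I}$ case this is Proposition \ref{prop:Flammin} applied in $H^1_{\pi_2}(M)$; in the $\mathcal{E}$ case, Proposition \ref{prop:enminconc}; in the $\mathcal{W}$ case, the argument of Proposition \ref{p2.3.a} extended to the class of manifolds considered. In each case the $\pi_2$-minimizer $u^{(2)} \in H^1_{\pi_2}(M)$ corresponds via (\ref{M.4})--(\ref{M.5}) to a nonzero $u_0^{(2)} \in H^1_r(M)$; nontriviality follows from the constraint $J_p(u_0^{(2)}) = \beta > 0$, respectively $\|u_0^{(2)}\|_{L^2}^2 = \beta > 0$, or from the requirement that $u_0^{(2)}$ be a $W$-maximizer.

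With such a minimizer in hand, the strict inequality is immediate. For $\mathcal{I}$:
\beq
\aligned
\mathcal{I}(\beta,\pi_2)
&= F_\lambda(u_0^{(2)}) + \mu_{\pi_2}^2 \Bigl\|\frac{u_0^{(2)}}{\sigma}\Bigr\|_{L^2}^2 \\
&> F_\lambda(u_0^{(2)}) + \mu_{\pi_1}^2 \Bigl\|\frac{u_0^{(2)}}{\sigma}\Bigr\|_{L^2}^2
\ge \mathcal{I}(\beta,\pi_1),
\endaligned
\eeq
where the strict step uses $\mu_{\pi_2}^2 > \mu_{\pi_1}^2$ together with $\|u_0^{(2)}/\sigma\|_{L^2}^2 > 0$, which follows from $u_0^{(2)} \not\equiv 0$ and $\sigma > 0$. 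The argument for $\mathcal{E}$ is identical with $E$ in place of $F_\lambda$. For $\mathcal{W}$ the same reasoning applied to the ratio in (\ref{M.10}) yields $1/\mathcal{W}(\pi_2) > 1/\mathcal{W}(\pi_1)$, i.e., $\mathcal{W}(\pi_2) < \mathcal{W}(\pi_1)$.

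The main technical obstacle is the existence of minimizers in the weighted $H^1_r(M)$ formulations: the weight $1/\sigma^2$ may be singular at $r = 0$ when $\sigma(0) = 0$, so one must check that minimizing sequences remain well-behaved near this singularity. However, the identity (\ref{M.7}) shows that $\mu_\pi^2 \|u_0/\sigma\|_{L^2}^2$ is controlled by $\|\nabla u\|_{L^2}^2$ on $H^1_\pi(M)$, so minimizing sequences extracted in $H^1_{\pi_2}(M)$ automatically furnish the required bound; the compactness arguments of Sections \ref{sec:vortspherFlambda}--\ref{sec:vortspherWmax} then transfer without essential modification.
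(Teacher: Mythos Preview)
Your approach is essentially the paper's own: the paper states Proposition \ref{pm.1} immediately after deriving the formulas (\ref{M.10})--(\ref{M.12}) and gives no separate proof, treating the monotonicity in $\mu_\pi^2$ as self-evident from those expressions. Your write-up is a faithful and more careful elaboration, correctly observing that pointwise strict monotonicity of the functionals does not by itself yield strict inequality of the infima, and that one needs the infimum at $\pi_2$ to be \emph{attained} to close the gap; evaluating the $\pi_1$-functional at the $\pi_2$-minimizer is exactly the right move.

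One cautionary remark: your appeal to ``the argument of Proposition \ref{p2.3.a} extended to the class of manifolds considered'' is optimistic. That proof uses the dilation $u(x)\mapsto u(bx)$ to normalize $\|u\|_{L^2}$ and $\|\nabla u\|_{L^2}$ simultaneously, and this scaling is specific to $\RR^n$; on a general $M$ of the form \eqref{eqn:sphereM}--\eqref{eqn:metric} it is unavailable, and indeed the paper itself notes (after Proposition \ref{prop:NLSmcge1}) that Weinstein maximizers need not exist on many manifolds. So for the $\Cal{W}$ inequality on general $M$ your argument, as written, has a gap. The paper sidesteps this by leaving the matter implicit; if you want a rigorous strict inequality for $\Cal{W}$ beyond $\RR^n$ you would either need to supply an existence argument for $W$-maximizers in $H^1_{\pi_2}(M)$ under additional structural hypotheses, or argue indirectly (e.g., assume equality, show a $\pi_2$-minimizing sequence is also $\pi_1$-minimizing with $\|u_0^{(k)}/\sigma\|_{L^2}\to 0$, and derive a contradiction from compactness of minimizing sequences at $\pi_1$). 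For $\Cal{I}$ and $\Cal{E}$ your argument is fine as stated, since Propositions \ref{prop:Flammin} and \ref{prop:enminconc} do supply the needed minimizers under the indicated hypotheses.
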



One could tackle analogous results for axial vortices, but we omit this.

\subsection{Mass critical scattering below the vortex mass}
\label{sec:vortscat}

We derive a result on scattering to a linear solution as $t\rightarrow
\pm\infty$ to the $L^2$-critical NLS equation \eqref{eqn:NLSmc} when $M=\RR^n$,
with initial data
\beq
v_0\in \Sigma_\pi=H^1_\pi(\RR^n)\cap H^{0,1}(\RR^n),
\label{4.3.1}
\eeq
where
$$
\|u\|_{H^{0,1}(\RR^n)}=\|\langle x\rangle u\|_{L^2(\RR^n)},
$$
assuming the $L^2$-norm of $v_0$ is below that of $Q_\pi$.

\begin{proposition}
\label{t:scattering}
Let us  take $v_0 \in \Sigma_\pi$ such that
$\| v_0 \|_{L^2 (\mathbb{R}^n)} < \| Q_\pi \|_{L^2 (\mathbb{R}^n)}$,
with $Q_\pi$ defined as in
Proposition \ref{prop:NLSmcge1} and \eqref{4.1.24A},
so that \eqref{eqn:NLSmc}
has a global solution in $H^1_\pi (\mathbb{R}^n)$.  Then, the solution
satisfies $v(t)\in \Sigma_\pi$ for all time,
and there exist $v_{\pm}\in\Sigma_\pi$
such that
\begin{equation}
\lim\limits_{t\rightarrow\pm\infty}\, e^{- it\Delta}v(t)=v_{\pm}.
\end{equation}
\end{proposition}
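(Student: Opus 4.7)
The plan is to adapt the classical Cazenave--Weissler scattering argument (\cite{Caz_book}, Ch.~7) for mass-critical NLS with finite-variance data below the ground-state mass to the $\pi$-equivariant setting; the only new ingredient beyond the flat case is the sharp $\pi$-Gagliardo--Nirenberg constant $\Cal{W}(\pi)$ from Proposition \ref{p2.3.a}. First I would check that $H^1_\pi(\Rr^n)$ is preserved by the NLS flow, which is immediate since $|v|^{p-1}v$ is $\pi$-equivariant whenever $v$ is (by unitarity of $\pi$). Combining this with local well-posedness in $H^1(\Rr^n)$ (Remark \ref{rem:H1pilwp}) and the standard propagation of the weight $|x|$ through the NLS flow via the commutation of the Galilean operator $J(t)=x+2it\nabla$ with $i\pa_t+\Delta$, one obtains local well-posedness in $\Sigma_\pi$. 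Proposition \ref{prop:NLSmcge1} already gives global existence in $H^1_\pi$ under $\|v_0\|_{L^2}<\|Q_\pi\|_{L^2}$, and the bounds derived below promote this to global existence in $\Sigma_\pi$.

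The crux is the pseudo-conformal conservation law: for $p=1+4/n$, a standard virial calculation yields that the quantity
\beq
\|J(t)v(t)\|^2_{L^2}-\frac{8t^2}{p+1}\|v(t)\|^{p+1}_{L^{p+1}}
\label{sc.pc}
\eeq
is constant in $t$, equal to $\|xv_0\|^2_{L^2}$. Using the factorization $J(t)u=2it\,e^{i|x|^2/(4t)}\nabla(e^{-i|x|^2/(4t)}u)$ and the fact that the radial phase $e^{-i|x|^2/(4t)}$ preserves $H^1_\pi(\Rr^n)$, the sharp $\pi$-Gagliardo--Nirenberg inequality \eqref{eqn:gagnirpiW} (with $\alpha=4/n,\beta=2$) applied to $e^{-i|x|^2/(4t)}v(t)$ yields
\beq
\|v(t)\|^{p+1}_{L^{p+1}}\le \frac{\Cal{W}(\pi)}{4t^2}\|v_0\|^{4/n}_{L^2}\|J(t)v(t)\|^2_{L^2}.
\label{sc.gn}
\eeq
Inserting (\ref{sc.gn}) into the conservation of (\ref{sc.pc}) gives $(1-\theta)\|J(t)v(t)\|^2_{L^2}\le \|xv_0\|^2_{L^2}$ with $\theta=2\Cal{W}(\pi)\|v_0\|^{4/n}_{L^2}/(p+1)$; the hypothesis $\|v_0\|_{L^2}<\|Q_\pi\|_{L^2}$, together with the identity in Remark \ref{r4.1.5}, ensures $\theta<1$. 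Hence $\|J(t)v(t)\|_{L^2}$ is uniformly bounded, and (\ref{sc.gn}) then yields the decay $\|v(t)\|^{p+1}_{L^{p+1}}=O(|t|^{-2})$, i.e. $\|v(t)\|_{L^{p+1}}=O(|t|^{-2/(p+1)})$.

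Scattering in $\Sigma_\pi$ then follows from a routine Strichartz argument. Writing the Duhamel formula $v(t)=e^{it\Delta}v_0+i\int_0^t e^{i(t-\tau)\Delta}(|v|^{p-1}v)(\tau)\,d\tau$, dual Strichartz estimates reduce the control of $\|e^{-it\Delta}v(t)-e^{-is\Delta}v(s)\|_{L^2}$ to a dual-norm bound on $|v|^{p-1}v$ on $[s,t]$, and the integrable time-decay of $\|v(t)\|^{p+1}_{L^{p+1}}$ forces this to vanish as $s,t\to\pm\infty$. Applying the same scheme with $\nabla$ and $x$ in place of the identity (using the commutation of these operators with $e^{-it\Delta}$ modulo lower-order terms, and the uniform bound on $\|J(t)v\|_{L^2}$) upgrades convergence from $L^2$ to $\Sigma_\pi$; the limits $v_\pm$ automatically inherit $\pi$-equivariance since $e^{-it\Delta}$ commutes with the $G$-action. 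The main technical obstacle I expect is the rigorous justification of \eqref{sc.pc} at $\Sigma$-regularity, which requires an approximation argument through smoother data (e.g. in $H^2\cap H^{0,2}$) and passage to the limit; this is handled in detail in \cite{Caz_book} and carries over to $\Sigma_\pi$ with no additional difficulty since the approximating sequence can be chosen $\pi$-equivariant by averaging over $G$.
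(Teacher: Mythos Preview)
Your proposal is correct and follows essentially the same strategy as the paper: the pseudoconformal conservation law combined with the sharp $\pi$-Gagliardo--Nirenberg inequality (via the factorization $J(t)=2it\,e^{i|x|^2/4t}\nabla e^{-i|x|^2/4t}$ and the fact that the radial phase preserves $H^1_\pi$) to obtain $L^{p+1}$-decay and uniform $\Sigma_\pi$-bounds, followed by a Duhamel--Strichartz argument showing $e^{-it\Delta}v(t)$ is Cauchy.

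The one place where the paper proceeds differently is in establishing the diagonal Strichartz bound $v\in L^q_{t,x}([1,\infty)\times\Rr^n)$, $q=2(n+2)/n$. Rather than integrating the $O(|t|^{-2})$ decay of $\|v(t)\|^{p+1}_{L^{p+1}}$ directly as you do, the paper applies the pseudoconformal transformation $\tilde v(t,x)=|t|^{-n/2}e^{i|x|^2/4t}v(-1/t,x/t)$ to the full solution, observes that $\tilde v$ is again a solution of \eqref{eqn:NLSmc} in $\Sigma_\pi$ with $\|\tilde v\|_{L^2}=\|v_0\|_{L^2}<\|Q_\pi\|_{L^2}$, so extends globally through $t=0$ by Proposition~\ref{prop:NLSmcge1}, and then pulls back the local Strichartz bounds for $\tilde v$ on $[-1,0]$ to bounds for $v$ on $[1,\infty)$. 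Both routes are valid; yours is the more direct one, while the paper's yields all admissible Strichartz norms simultaneously and makes a second, structurally distinct use of the sub-threshold global existence result.
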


In outline, our argument follows the pattern presented in Chapter 7 of
\cite{Caz_book}, but with some necessary modifications.

\subsubsection{First $H^{0,1}$ estimates}

We recall Proposition $6.5.1$ from \cite{Caz_book}.

\begin{proposition}
\label{prop:pcwp}
Let $v_0 \in H^1 (\Rr^n)\cap H^{0,1}(\RR^n)$ be the initial data
for \eqref{eqn:NLSmc}.  Given solution $v(x,t) \in C ( (-T_{min}, T_{max}),
H^1 (\Rr^n))$, we have
$$
|x| v(x,t) \in C(( -T_{min}, T_{max}), L^2 (\Rr^n)).
$$
In addition, the map
\begin{equation}
t \to f(t) = \int_{\Rr^n} |x|^2 |v|^2 (x,t) dx \in C^2 (-T_{min}, T_{max})
\end{equation}
with
\begin{equation}
f'(t) = 4 \Im \int_{\Rr^n} \bar{v} x \cdot \nabla v  dx
\end{equation}
and
\begin{equation}
f''(t) = 16 E(v_0),
\label{4.3.B}
\end{equation}
for $E(v_0)$ defined as in \eqref{eqn:energy}.
\end{proposition}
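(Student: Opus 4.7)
The plan is to prove the three assertions by a regularize-compute-pass-to-the-limit argument, based on the standard Glassey--Pokhozhaev virial identity, which takes a particularly clean form for the mass-critical exponent $p=1+4/n$. The core is to verify, at a formal level,
\beq
f''(t)=8\|\nabla v(t)\|_{L^2}^2-\frac{4n(p-1)}{p+1}\|v(t)\|_{L^{p+1}}^{p+1},
\label{virial-plan}
\eeq
and then observe that, because $p-1=4/n$, we have $4n(p-1)/(p+1)=16/(p+1)$, so the right side of \eqref{virial-plan} equals $16E(v(t))$, which by conservation of energy equals $16E(v_0)$. This is precisely the reason the mass-critical exponent is singled out, and it gives \eqref{4.3.B}.

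Proceeding to the computations, I would first assume that $v$ is a classical Schwartz-class solution on the time interval of interest. Then, using $i\partial_t v=-\Delta v-|v|^{p-1}v$,
\beq
\aligned
f'(t)
&=2\Re\int_{\RR^n}|x|^2\bar v\,\p_t v\,dx
=-2\Im\int_{\RR^n}|x|^2\bar v(\Delta v+|v|^{p-1}v)\,dx \\
&=-2\Im\int_{\RR^n}|x|^2\bar v\,\Delta v\,dx
=2\Im\int_{\RR^n}\nabla(|x|^2\bar v)\cdot\nabla v\,dx
=4\Im\int_{\RR^n}\bar v\, x\cdot\nabla v\,dx,
\endaligned
\eeq
where the nonlinear term vanishes because $|x|^2|v|^{p+1}$ is real, and the gradient term $\Im\int|x|^2|\nabla v|^2\,dx=0$. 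Differentiating once more in $t$, substituting the equation for $\p_tv$ in the factors $\p_t\bar v$ and $\p_t\nabla v$, and integrating by parts (using $\div(x)=n$, $\nabla(x_j)=e_j$), produces \eqref{virial-plan}. Everything in this step is a routine but careful integration by parts.

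The main technical task is to make these manipulations rigorous for data merely in $\Sigma_\pi=H^1_\pi(\RR^n)\cap H^{0,1}(\RR^n)$. I would approximate $v_0$ by a sequence $v_0^{(k)}\in \s(\RR^n)\cap H^1_\pi(\RR^n)$ converging to $v_0$ in the $\Sigma$-norm (obtained, for instance, by averaging Schwartz truncations over the group $G$ acting on $V$, so as to preserve the $\pi$-equivariance), and let $v^{(k)}$ denote the corresponding solutions. By persistence of regularity for the local Cauchy problem in $H^2$, together with the standard calculation $\frac{d}{dt}\||x|v^{(k)}\|_{L^2}^2=4\Im(\bar v^{(k)},x\cdot\nabla v^{(k)})$ which bounds the $H^{0,1}$-growth by $\|\nabla v^{(k)}\|_{L^2}$, the $v^{(k)}$ are Schwartz in $x$, and the computation of $f'$ and $f''$ at their level is justified. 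Continuous dependence of the flow on initial data in $\Sigma$, combined with the fact that each of the quantities $\||x|v^{(k)}\|_{L^2}^2$, $\|\nabla v^{(k)}\|_{L^2}^2$, $\|v^{(k)}\|_{L^{p+1}}^{p+1}$ converges locally uniformly in $t$ to the corresponding quantity for $v$, then transfers the identities for $f',f''$ and the $C^2$ regularity to the limit.

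The main obstacle is precisely this last step: proving propagation of the $H^{0,1}$ weight and continuous dependence of the flow in the $\Sigma$-topology. The cleanest device is to introduce the pseudoconformal vector field $J(t)=x+2it\nabla$, observe $[i\p_t+\Delta,J(t)]=0$, and use Strichartz estimates applied to $J(t)v$; in the mass-critical case the commutator $[J(t),|v|^{p-1}v]$ is controlled in a closed way by $\|J(t)v\|_{L^2}$ and Strichartz norms of $v$, which yields $J(t)v\in C(I,L^2)$ and hence $|x|v\in C(I,L^2)$ with quantitative bounds. Once this persistence is in hand, the approximation argument goes through and \eqref{4.3.B} follows.
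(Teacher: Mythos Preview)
Your proposal is correct and the formal computations for $f'$ and $f''$ are accurate, including the clean observation that $4n(p-1)/(p+1)=16/(p+1)$ at the mass-critical exponent. The rigorous justification you outline is also sound.

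Your route to rigor differs from the one the paper sketches (following Cazenave). The paper regularizes the \emph{weight} rather than the data: one works with
\[
f_{\epsilon,m}(t)=\bigl\|e^{-\epsilon|x|^2}|x|\,v_m(t)\bigr\|_{L^2}^2,
\]
for solutions $v_m$ launched from $H^2$ data approximating $v_0$, computes the virial identities at this doubly regularized level, and then passes to the limit first in $\epsilon\to 0$ (uniformly) and then in $m$. You instead approximate by Schwartz data so that no spatial cutoff of $|x|^2$ is needed, and you control the $H^{0,1}$ propagation via the pseudoconformal operator $J(t)=x+2it\nabla$ and its commutation with $i\partial_t+\Delta$. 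Both methods are standard; the weight-regularization approach avoids Strichartz machinery entirely and works directly from the equation, while your $J(t)$ argument is more streamlined and meshes naturally with the pseudoconformal identities used immediately afterward in the scattering section. One minor remark: the proposition is stated for general $v_0\in H^1\cap H^{0,1}$, so there is no need to preserve the $\pi$-equivariance in your approximating sequence; plain Schwartz approximants suffice.
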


The proof follows from proving uniform bounds in the $\epsilon \to 0$ limit
carefully for the regularized function
\begin{equation*}
f_{\epsilon,m} (t) = \| e^{- \epsilon |x|^2} |x| v_m \|_{L^2 (\Rr^n)}^2
\end{equation*}
assuming that $v_0^{(m)} \in H^2$, then taking a suitable limiting argument
for $\{ v_0^{m} \} \in H^2$ converging to $v_0 \in H^1$.

\subsubsection{The pseudoconformal transformation and conservation law for
mass critical NLS}

The scaling invariance in the mass critical NLS equation 
leads to the conservation law
\begin{equation}
\label{eqn:pscl}
\| (x + 2 i t \nabla) v (t) \|_{L^2}^2 - 4 t^2 \frac{1}{1 + {2}/{n}}
\int |v|^{2 + {4}/{n}} dx = \| x v_0 \|_{L^2}^2 .
\end{equation}
This is essentially stated in Theorem $7.2.1$ from \cite{Caz_book}.  
An equivalent form given in (7.2.8) of \cite{Caz_book}, is that, for
\begin{equation}
\label{eqn:uvpct}
u(t,x) = e^{{- i |x|^2}/{4t}} v(t,x), 
\end{equation}
we have that $u \in H^1$ is well defined and indeed
\begin{equation}
\label{eqn:pctd}
8 t^2  E(u(t)) = \| x v_0 \|_{L^2}^2.
\end{equation}
Another useful variant is
\beq
\|xe^{-it\Delta}v(t)\|_{L^2(\RR^n)}^2-t^2\frac{1}{2+4/n}
\int |v(t,x)|^{2+4/n}\, dx=\|xv_0\|^2_{L^2}.
\label{4.3.3}
\eeq

Furthermore, there is the following pseudoconformal transformation
\begin{equation}
\label{e:pct}
\tilde v (x,t) = t^{-{n}/{2}} e^{ i{|x|^2}/{4t} } v \left(- \frac{1}{t} ,
\frac{x}{t} \right),
\end{equation}
which leaves $\Sigma_\pi$ invariant.
In the case of the $L^2$ critical NLS, \eqref{e:pct} is also a solution to
\eqref{eqn:NLSmc}.
Also, $\| \tilde{v}\|_{L^2} = \| v_0 \|_{L^2}$.
Hence, since $\tilde v(t) \in H^1_\pi$, we are still
guaranteed global existence via Section \ref{s4.1}.

Local well-posedness for $v$
solving \eqref{eqn:NLSmc} in
$H^1$, on a time interval $[t_1, t_2]$ gives the Strichartz
estimates
\begin{equation}
\label{4.3.5}
\| v \|_{L^p L^q ( [t_1,t_2] \times \Rr^n)} \lesssim \| u_0 \|_{L^2}
\end{equation}
for ${2}/{p} + {n}/{q} = {n}/{2}$, $2 \leq p \leq \infty$,
$2 \leq q \leq 2 + {4}/({n-2})$ for $n \geq 3$ and $2 \leq q < \infty$
for $n =2$.  The Strichartz pair $p = q = {2 (n+2)}/{n}$ plays a special
role in scattering theory.
It is of vital importance that the pseudoconformal transform is also an
isometry on the Strichartz
spaces, namely we have
\begin{eqnarray}
\label{4.3.6}
\| v \|_{L^p L^q ( [-t_1^{-1}, -t_2^{-1}] \times \Rr^n)} = \| \tilde v
\|_{L^p L^q ( [t_1,t_2] \times \Rr^n)}.
\end{eqnarray}

\subsubsection{$L^r(\RR^n)$ decay rates and bounds in $\Sigma_\pi$} 

We want to prove $L^r$ decay estimates in time for solutions of
\eqref{eqn:NLSmc} parallel to Theorem $7.3.1$ in \cite{Caz_book},
but here in the case of the focusing,
mass critical nonlinearity.  We have the following result.

\begin{proposition} \label{p4.3.3}
Let $v$ be a solution of \eqref{eqn:NLSmc}, with initial data $v_0\in
\Sigma_\pi$, satisfying $\|v_0\|_{L^2}<\|Q_\pi\|_{L^2}$.
Let $u$ be defined as in \eqref{eqn:uvpct}.
For $2 \leq r \leq {2n}/({n-2})$ for $n \geq 3 $
and $2 \leq r < \infty$ for $n \leq 2  $, we have
\begin{equation}
\| u(t) \|_{L^r (\Rr^n)} = \|v(t)\|_{L^r(\RR^n)}
\leq C \langle t \rangle^{-n (1/2 - 1/{r})},
\label{4.3.10}
\end{equation}
for all $t \in \Rr$.
\end{proposition}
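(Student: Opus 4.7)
The plan is to turn the pseudoconformal conservation law into an $L^2$-decay bound for $\nabla u(t)$, then interpolate. First observe that $v(t)\in H^1_\pi(\RR^n)$ for all $t$ by the hypothesis of the section, and because $G\subset SO(n)$ acts by isometries the phase $e^{-i|x|^2/(4t)}$ is $G$-invariant; hence
\[
u(t,gx)=e^{-i|gx|^2/(4t)}v(t,gx)=e^{-i|x|^2/(4t)}\pi(g)v(t,x)=\pi(g)u(t,x),
\]
so $u(t)\in H^1_\pi(\RR^n)$ as well, and in particular the sharp Gagliardo--Nirenberg constant $\Cal{W}(\pi)$ of Section \ref{sec:vortspherWmax} is available for $u(t)$. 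Also $\|u(t)\|_{L^r}=\|v(t)\|_{L^r}$ pointwise in $t$ for every $r$, and $\|u(t)\|_{L^2}=\|v_0\|_{L^2}$ by mass conservation.

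Next, combine Weinstein's sharp inequality
\[
\|u(t)\|_{L^{p+1}}^{p+1}\le \Cal{W}(\pi)\,\|u(t)\|_{L^2}^{p-1}\|\nabla u(t)\|_{L^2}^{2}
\]
(with $\alpha=p-1$, $\beta=2$ since $p=1+4/n$) with the identity
\[
\|Q_\pi\|_{L^2}=\Bigl(\frac{p+1}{2\Cal{W}(\pi)}\Bigr)^{1/(p-1)}
\]
from Remark \ref{r4.1.5}. The assumption $\|v_0\|_{L^2}<\|Q_\pi\|_{L^2}$ then yields a constant $\sigma<1/2$ with
\[
\frac{\Cal{W}(\pi)}{p+1}\|v_0\|_{L^2}^{p-1}=\sigma,
\]
so that, inserting the GN bound into the definition of $E$,
\[
E(u(t))\ge\Bigl(\tfrac12-\sigma\Bigr)\|\nabla u(t)\|_{L^2}^2.
\]

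The third step is to bring in the pseudoconformal identity \eqref{eqn:pctd}, i.e.\ $8t^2 E(u(t))=\|xv_0\|_{L^2}^2$, which combined with the previous inequality gives
\[
\|\nabla u(t)\|_{L^2}^2\le\frac{\|xv_0\|_{L^2}^2}{8t^2(1/2-\sigma)},
\qquad t\ne 0.
\]
Together with the $L^2$ bound $\|u(t)\|_{L^2}=\|v_0\|_{L^2}$, the standard Gagliardo--Nirenberg interpolation
\[
\|u(t)\|_{L^r}\le C\,\|u(t)\|_{L^2}^{1-\theta}\|\nabla u(t)\|_{L^2}^{\theta},
\qquad \theta=n\Bigl(\tfrac12-\tfrac1r\Bigr),
\]
valid in the stated range of $r$, then yields $\|u(t)\|_{L^r}\le C|t|^{-n(1/2-1/r)}$ for $|t|\ge 1$. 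For $|t|\le 1$, the Sobolev embedding $H^1\hookrightarrow L^r$ and local boundedness of $\|v(t)\|_{H^1}$ (which follows from the global existence guaranteed in Section \ref{s4.1}) give a uniform bound, and since $\langle t\rangle^{-n(1/2-1/r)}$ is bounded below on $\{|t|\le 1\}$ we may absorb it into the constant; this produces \eqref{4.3.10}.

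The main technical obstacle is really just confirming that $u(t)\in H^1_\pi(\RR^n)$ so that the sharp constant $\Cal{W}(\pi)$—rather than the larger full-space Weinstein constant—may be used; everything else is a routine combination of the pseudoconformal identity, the sub-threshold mass hypothesis, and Gagliardo--Nirenberg. A minor secondary point is rigorous justification of the pseudoconformal identity at the $H^1\cap H^{0,1}$ regularity, which is handled by Proposition \ref{prop:pcwp} and an approximation argument as sketched there.
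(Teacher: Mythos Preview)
Your proof is correct and follows essentially the same route as the paper: verify $u(t)\in H^1_\pi$, use the sharp Gagliardo--Nirenberg inequality with constant $\Cal{W}(\pi)$ together with the sub-threshold mass hypothesis to obtain $\|\nabla u(t)\|_{L^2}^2\le C\,E(u(t))$, then feed in the pseudoconformal identity $8t^2E(u(t))=\|xv_0\|_{L^2}^2$ and interpolate. Your explicit justification of $u(t)\in H^1_\pi$ via $G$-invariance of the phase and your separate handling of $|t|\le 1$ are minor elaborations on the paper's more compressed presentation, not a different strategy.
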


\begin{proof}
Note that $u(t)\in\Sigma_\pi$ for each $t$.  Hence, parallel to
\eqref{eqn:NLSmcgradeqE},
we have (with $p=1+4/n$)
\beq
\aligned
\frac{1}{2}\|\nabla u(t)\|_{L^2}^2
&=E(u(t))+\frac{1}{p+1}\|u(t)\|^{p+1}_{L^{p+1}} \\
&\le E(u(t))+\frac{C^{p+1}_{\Cal{H}}}{p+1}\|u(t)\|_{L^2}^{p-1}
\|\nabla u(t)\|^2_{L^2},
\endaligned
\eeq
and $\|u(t)\|_{L^2}=\|v(t)\|_{L^2}=\|v_0\|_{L^2}$, so, under our current
hypotheses, we have, parallel to \eqref{eqn:NLSmcgradbdE},
\beq
\|\nabla u(t)\|^2_{L^2}\le C E(u(t)),
\eeq
with $C<\infty$.  Then the Gagliardo-Nirenberg inequality plus the
conservation law \eqref{eqn:pctd} give
\beq
\aligned
\|v(t)\|_{L^r}=\|u(t)\|_{L^r}
&\le C_r \|\nabla u(t)\|_{L^2}^{n(1/2-1/r)}
\|u(t)\|_{L^2}^{1-n(1/2-1/r)} \\
&\le C'_r E(u(t))^{n(1/2-1/r)/2} \|u(t)\|_{L^2}^{1-n(1/2-r/2)} \\
&= C''_r t^{-n(1/2-1/r)}\|v_0\|_{L^2}^{1-n(1/2-1/r)},
\endaligned
\eeq
as desired.
\end{proof}

We next establish bounds in $\Sigma_\pi$.

\begin{proposition} \label{p4.3.4}
For $v$ as in Proposition \ref{p4.3.3}, we have the $\Sigma_\pi$ bound
\beq
\|e^{-it\Delta} v(t)\|_{H^1}+\|x e^{-it\Delta}v(t)\|_{L^2}\le C<\infty,
\eeq
for all $t\in\RR$.
\end{proposition}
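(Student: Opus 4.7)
The plan is to bound the two summands $\|e^{-it\Delta}v(t)\|_{H^1}$ and $\|xe^{-it\Delta}v(t)\|_{L^2}$ separately, exploiting (i) that $e^{-it\Delta}$ is an $L^2$-isometry that commutes with $\nabla$, (ii) the global-in-time $H^1$ bound on $v$ produced by the sub-threshold argument of Section \ref{s4.1}, and (iii) the pseudoconformal conservation law \eqref{4.3.3} combined with the pointwise-in-$t$ decay of Proposition \ref{p4.3.3}.

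For the $H^1$ piece, unitarity of $e^{-it\Delta}$ on $L^2(\RR^n)$ plus mass conservation gives $\|e^{-it\Delta}v(t)\|_{L^2}=\|v(t)\|_{L^2}=\|v_0\|_{L^2}$, and since $\nabla$ commutes with $e^{-it\Delta}$ one also has $\|\nabla e^{-it\Delta}v(t)\|_{L^2}=\|\nabla v(t)\|_{L^2}$. The hypothesis $\|v_0\|_{L^2}<\|Q_\pi\|_{L^2}$, together with Remark \ref{r4.1.5}, forces the constant $\sigma$ in \eqref{eqn:NLSmcL2bdcon} to satisfy $\sigma<1/2$, so the estimate \eqref{eqn:NLSmcgradbdE} from the proof of Proposition \ref{prop:NLSmcge} yields
\[
\|\nabla v(t)\|_{L^2}^2\le \frac{2E(v_0)}{1-2\sigma}\qquad\text{for all } t\in\RR,
\]
which is the required uniform $H^1$ bound on $e^{-it\Delta}v(t)$.

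For the weighted-$L^2$ piece, I would rewrite \eqref{4.3.3} as
\[
\|xe^{-it\Delta}v(t)\|_{L^2}^2=\|xv_0\|_{L^2}^2+\frac{t^2}{2+4/n}\,\|v(t)\|_{L^{2+4/n}}^{2+4/n},
\]
and apply Proposition \ref{p4.3.3} at the exponent $r=2+4/n$ (which satisfies $r\le 2n/(n-2)$ for $n\ge 3$ and $r<\infty$ otherwise). A direct calculation gives $n(\tfrac12-\tfrac{1}{2+4/n})=\tfrac{n}{n+2}$, hence $\|v(t)\|_{L^{2+4/n}}\le C\langle t\rangle^{-n/(n+2)}$, and raising to the power $2+4/n=2(n+2)/n$ produces $\|v(t)\|_{L^{2+4/n}}^{2+4/n}\le C\langle t\rangle^{-2}$. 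Therefore $t^2\|v(t)\|_{L^{2+4/n}}^{2+4/n}\le C$ uniformly in $t\in\RR$, and since $v_0\in H^{0,1}(\RR^n)$ makes $\|xv_0\|_{L^2}^2$ finite, combining these observations delivers the claimed bound $\|xe^{-it\Delta}v(t)\|_{L^2}\le C$.

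The only real subtlety, which I do not expect to be an obstacle so much as the conceptual heart of the argument, is the precise matching between the $L^{2+4/n}$-decay rate and the $t^2$ growth in the pseudoconformal identity: this balance is a manifestation of the mass-critical scaling and is the reason the argument closes exactly at $p=1+4/n$. Both inputs---the decay estimate, which rests on $E(u(t))=\|xv_0\|_{L^2}^2/(8t^2)$ from \eqref{eqn:pctd} together with the sub-vortex-mass Gagliardo--Nirenberg step already inside the proof of Proposition \ref{p4.3.3}, and the uniform control of $\|\nabla v(t)\|_{L^2}$---depend crucially on the strict inequality $\|v_0\|_{L^2}<\|Q_\pi\|_{L^2}$; without it, neither piece of the argument would close, and the classical Glassey virial blow-up mechanism would intervene.
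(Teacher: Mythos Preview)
Your proof is correct and follows essentially the same line as the paper: the $H^1$ bound comes from the global gradient control of Section~\ref{s4.1} together with unitarity of $e^{-it\Delta}$, and the weighted-$L^2$ bound comes from the pseudoconformal identity \eqref{4.3.3} combined with the $L^{2+4/n}$ decay of Proposition~\ref{p4.3.3}, the exponent computation $rn(1/2-1/r)=2$ at $r=2+4/n$ being exactly the mass-critical balance you identify.
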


\begin{proof}
The $H^1$-bound on $v(t)$ has been discussed, and $\{e^{-it\Delta}:t\in\RR\}$
is uniformly bounded on $H^1(\RR^n)$.  By \eqref{4.3.B}, $xv(t)$ is not
bounded in $L^2(\RR^n)$.  However, \eqref{eqn:pctd} says
\beq
\|x e^{-it\Delta}v(t)\|^2_{L^2}=\|xv_0\|_{L^2}^2
+\frac{t^2}{2+4/n}\|v(t)\|_{L^r}^r,\quad r=2+\frac{4}{n},
\eeq
and taking $r=2+4/n$ in \eqref{4.3.10} gives
$$
rn\Bigl(\frac{1}{2}-\frac{1}{r}\Bigr)=2,
$$
and hence provides the desired bound.
\end{proof}

\subsubsection{Proof of Proposition \ref{t:scattering}}

We will examine the behavior of $e^{-it\Delta}v(t)$ as $t\rightarrow +\infty$.
The behavior as $t\rightarrow -\infty$ has a parallel treatment.

Under the hypotheses of Proposition \ref{t:scattering}, $v$ solves
\eqref{eqn:NLSmc} for $t>0$ and $\tilde{v}$ solves \eqref{eqn:NLSmc} for $t<0$.
Furthermore, $\tilde{v}(t)\in \Sigma_\pi$ for each $t<0$, and $\|\tilde{v}(t)
\|_{L^2}=\|v(-1/t)\|_{L^2}=\|v_0\|_{L^2}<\|Q_\pi\|_{L^2}$.  Thus $\tilde{v}$
continues past $t=0$ as a global solution to \eqref{eqn:NLSmc}.  It follows that
\beq
\|\tilde{v}\|_{L^qL^r([-1,0]\times\RR^n)}<\infty,
\label{4.3.19}
\eeq
and hence that
\beq
\|v\|_{L^qL^r([1,\infty)\times\RR^n)}<\infty,
\label{4.3.20}
\eeq
for each Strichartz-admissible pair $(q,r)$, in particular for
\beq
q=r=\frac{2(n+2)}{n}.
\label{4.3.21}
\eeq
Now the solution $v(t)$ satisfies the integral equation
\beq
v(t)=e^{it\Delta}v_0+i\int_0^t e^{i(t-s)\Delta}\varphi(v(s))\, ds,
\label{4.3.22}
\eeq
with
\beq
\varphi(v)=|v|^{4/n}v.
\label{4.3.23}
\eeq
Hence, for $0\le t_1<t_2<\infty$,
\beq
\aligned
e^{-it_2\Delta}v(t_2)-e^{-it_1\Delta}v(t_1)
&=i\int_{t_1}^{t_2} e^{-is\Delta}\varphi(v(s))\, ds \\
&=T^* H_{t_1 t_2},
\endaligned
\label{4.3.24}
\eeq
where
\beq
H_{t_1,t_2}(s,x)=i\chi_{[t_1,t_2]}(s) \varphi(v(s,x)),
\label{4.3.25}
\eeq
and
\beq
T^*H(x)=\int_{-\infty}^\infty e^{-is\Delta} H(s,x)\, ds
\label{4.3.26}
\eeq
defines $T^*$ as the adjoint of $T$, given by
\beq
Tv_0(t,x)=e^{it\Delta}v_0(x).
\label{4.3.27}
\eeq
Strichartz estimates yield $T:L^2(\RR^n)\rightarrow L^qL^r(\RR\times\RR^n)$
and
\beq
T^*:L^{q'}(\RR,L^{r'}(\RR^n))\longrightarrow L^2(\RR^n),
\label{4.3.28}
\eeq
for Strichartz admissible pairs, including \eqref{4.3.21}.  Hence
\beq
\|e^{-it_2\Delta}v(t_2)-e^{-it_1\Delta}v(t_1)\|_{L^2}
\le C\|\varphi(v)\|_{L^{q'}([t_1,t_2]\times\RR^n)},
\label{4.3.29}
\eeq
with
\beq
q=\frac{2(n+2)}{n},\quad q'=\frac{2(n+2)}{n+4},
\label{4.3.30}
\eeq
hence
\beq
\aligned
\|\varphi(v)\|_{L^{q'}(I\times\RR^n)}^{q'}
&=\int\limits_I \int\limits_{\RR^n}
|v(s,x)|^{q'(1+4/n)}\, dx\, ds \\
&=\|v\|^q_{L^q(I\times\RR^n)}.
\endaligned
\label{4.3.31}
\eeq
Thus \eqref{4.3.20} guarantees that
\beq
\|e^{-it_2\Delta}v(t_2)-e^{-it_1\Delta}v(t_1)\|_{L^2}\longrightarrow 0,\quad
\text{as }\ t_1,t_2\rightarrow +\infty.
\label{4.3.32}
\eeq
We hence have a limit
\beq
e^{-it\Delta}v(t)\longrightarrow v_+
\label{4.3.33}
\eeq
in $L^2(\RR^n)$.  Then the uniform bounds in $\Sigma_\pi$ given in
Proposition \ref{p4.3.4} imply that $v_+\in \Sigma_\pi$, and convergence in
\eqref{4.3.33} holds $\text{weak}^*$ in $\Sigma_\pi$.
Further arguments, parallel to those on pp.~220--221 of \cite{Caz_book},
yield norm convergence.


\begin{remark}
\label{rem:altscat}
We note that scattering arguments for \eqref{eqn:NLSmc} also exist in spaces
with fewer regularity restrictions in $\Rr^n$.
In particular, there is the following proposition from \cite{BlCo}.
Here, $Q_1$ denotes the Weinstein functional maximizer $Q_\pi$ when $\pi=1$
is the trivial representation of $SO(n)$ on $\CC$.  We say
$v_0\in H^{0,s}(\RR^n)$ if and only if $\langle x\rangle^s v_0\in L^2(\RR^n)$,
and set
$$
\|v_0\|_{H^{0,s}}=\|\langle x\rangle^s v_0\|_{L^2}.
$$

\begin{proposition}[Blue-Colliander]
\label{t:blco}
Let $s\in (0,1]$.
Assume \eqref{eqn:NLSmc} is globally well posed on that subset of
$u_0\in H^s(\RR^n)$ satisfying $\|u_0\|_{L^2}<\|Q_1\|_{L^2}$.
Then, if $v_0\in H^{0,s}(\RR^n)$ satisfies $\|v_0\|_{L^2}<\|Q_1\|_{L^2}$,
there is a solution $v(t,x)$ to \eqref{eqn:NLSmc} for all $t$.
Furthermore, there are $v_{\pm}\in H^{0,s}(\RR^n)$ such that
$$
\lim\limits_{t\rightarrow\pm\infty}\ \|e^{-it\Delta}v(t)-v_{\pm}\|_{H^{0,s}}
=0.
$$
\end{proposition}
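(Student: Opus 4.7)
The plan is to follow the blueprint of Proposition~\ref{t:scattering}, with two substitutions: the $H^1_\pi$-symmetry restriction is replaced by the abstract GWP hypothesis in $H^s$, and the $H^1$-regularity of the initial data is traded for weighted $H^{0,s}$-regularity. The pseudoconformal transformation \eqref{e:pct} is the natural bridge between these, since on mass-critical NLS it swaps $x$-weights with derivatives while preserving the $L^2$ mass.

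First I would establish global existence of $v$ in the class $H^{0,s}$. Given $v_0\in H^{0,s}$ with $\|v_0\|_{L^2}<\|Q_1\|_{L^2}$, apply a pseudoconformal change of variables to translate the problem into an initial value problem for $\tilde v$ whose initial data lies in $H^s$ with the same $L^2$ mass. The standing hypothesis of GWP in $H^s$ below the ground-state mass then yields a global $\tilde v(t)$; inverting the transformation returns a global $v(t)\in H^{0,s}$.

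Next I would derive $L^r$-decay estimates for $v(t)$ in the spirit of Proposition~\ref{p4.3.3}, but in the fractional setting. Concretely, I would extend the $s=1$ identity \eqref{eqn:pctd} (equivalently, the conservation law \eqref{eqn:pscl}) to $s\in(0,1]$ by interpolating between mass conservation ($s=0$) and the $s=1$ case, and combine with Gagliardo--Nirenberg to get, for $r$ in an admissible range,
\[
\|v(t)\|_{L^r(\RR^n)}\lesssim \langle t\rangle^{-\alpha_{n,s,r}}\|v_0\|_{H^{0,s}},
\qquad \alpha_{n,s,r}>0.
\]
These decay estimates imply $v\in L^qL^r([T,\infty)\times\RR^n)$ for some Strichartz-admissible pair and some finite $T$.

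With such $L^qL^r$ bounds in hand, I would close the scattering argument exactly as in \eqref{4.3.22}--\eqref{4.3.33}: Duhamel together with dual Strichartz shows that $\{e^{-it\Delta}v(t)\}$ is Cauchy in $L^2$ as $t\to\pm\infty$, producing limits $v_\pm\in L^2$. The uniform weighted bounds from the fractional pseudoconformal identity upgrade this to weak-$^\ast$ convergence of $e^{-it\Delta}v(t)$ in $H^{0,s}$, and the density/approximation maneuver from pp.~220--221 of \cite{Caz_book}, already invoked at the end of Proposition~\ref{t:scattering}, promotes weak-$^\ast$ to norm convergence in $H^{0,s}$. The hard part will be the rigorous extension of the pseudoconformal identity to fractional $s$: since the nonlinear term in \eqref{eqn:pscl} is not linear in $v$, the interpolation must be carried out carefully, likely through the operator $J(t)=x+2it\nabla$ together with fractional-calculus estimates for $\langle x\rangle^s$ and its commutators with the Schr\"odinger flow; once that identity is in hand, the rest follows the Cazenave--type scheme with only cosmetic modifications.
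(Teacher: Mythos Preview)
The paper does not supply a proof of this proposition at all: it is stated inside Remark~\ref{rem:altscat} and attributed to \cite{BlCo}, with the phrase ``there is the following proposition from \cite{BlCo}.'' So there is no proof in the paper to compare against; the authors are simply quoting a known result from the literature.

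That said, your outline is in the spirit of the Blue--Colliander argument. The essential mechanism is the one you identify first: the pseudoconformal transform \eqref{e:pct} is a symmetry of mass-critical NLS that exchanges $|x|$-weights for derivatives, so global well-posedness in $H^s$ below the ground-state mass transfers to global existence for $H^{0,s}$ data with the same mass bound. Your later paragraph, however, takes a detour that is not how \cite{BlCo} proceed and is where the sketch becomes shaky: you propose to ``interpolate'' the pseudoconformal conservation law \eqref{eqn:pscl} between $s=0$ and $s=1$ to get a fractional identity. There is no such interpolated identity---\eqref{eqn:pscl} is a nonlinear conservation law, not a linear norm estimate, and one cannot interpolate it. What actually replaces it is the operator identity $J(t)=x+2it\nabla=e^{it\Delta}\,x\,e^{-it\Delta}$, together with the factorization $J(t)=2it\,e^{i|x|^2/4t}\nabla e^{-i|x|^2/4t}$, which directly converts $\|\langle x\rangle^s e^{-it\Delta}v(t)\|_{L^2}$ into an $H^s$-type quantity for the pseudoconformally transformed solution $\tilde v$; control of $\tilde v$ in $H^s$ (from the GWP hypothesis) then feeds back the weighted bounds and the Strichartz control needed for scattering. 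If you want to reconstruct the argument, that operator identity is the correct substitute for the step you flagged as ``the hard part.''
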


In addition, both \cite{GiVe} and \cite{Tsutsumi} give an early treatment of
scattering
theory in weighted spaces for repulsive nonlinearities.  In more recent
developments, concentration
compactness and frequency growth bounds via a refined interaction Morawetz estimate have been applied in the work of
Visan et al in \cite{KiTaVi,KiViZh0,TaViZh} and in the very recent
works of Dodson \cite{Dod,Dod1} to prove scattering purely in the space $L^2$
to handle both cases of defocusing mass critical NLS as well as the
focusing mass critical NLS with initial data mass below the ground state.
One is certainly tempted to speculate that such techniques can be applied to the
present vortex situation, but we do not pursue this here.
\end{remark}

\subsubsection{Existence of wave operators}

Here we show that material in Chapter 7 of \cite{Caz_book} can also be
adapted to prove the following result on existence of wave operators.

\begin{proposition} \label{p4.3.E}
Take $v_+\in\Sigma_\pi$ such that $\|v_+\|_{L^2}
<\|Q_\pi\|_{L^2}$.  Then there exists a unique solution
$v\in C(\RR,\Sigma_\pi)$ to \eqref{eqn:NLSmc}, satisfying \eqref{4.3.X} below, such
that
\beq
\lim\limits_{t\rightarrow+\infty}\, e^{-it\Delta}v(t)=v_+.
\eeq
There is a corresponding result for $t\rightarrow -\infty$.
\end{proposition}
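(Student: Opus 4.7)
The plan is to construct $v$ by solving the integral equation
\begin{equation*}
v(t)=e^{it\Delta}v_+-i\int_t^{\infty} e^{i(t-s)\Delta}\varphi(v(s))\,ds,\qquad \varphi(w)=|w|^{4/n}w,
\end{equation*}
on a half-line $[T,\infty)$ with $T$ large via a Strichartz fixed point, and then extending the solution backward to $(-\infty,T]$ using the global existence result of Section \ref{s4.1}.

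First I would perform the contraction on $[T,\infty)$ in a complete metric space of the form
\begin{equation*}
X_T=\bigl\{v\in C([T,\infty),H^1_\pi(\RR^n)):\|v\|_{L^qL^r\cap L^\infty L^2}+\|\nabla v\|_{L^qL^r\cap L^\infty L^2}\le 2M\bigr\},
\end{equation*}
with $(q,r)=(2(n+2)/n,2(n+2)/n)$, using the $L^qL^r$ distance. The standard Strichartz estimates imply $e^{it\Delta}v_+\in L^qL^r(\RR\times\RR^n)$ and $e^{it\Delta}\nabla v_+\in L^qL^r(\RR\times\RR^n)$, so the tails on $[T,\infty)$ can be made arbitrarily small by taking $T$ large; this makes the mapping defined by the right-hand side of the integral equation a contraction on $X_T$ by the usual inhomogeneous Strichartz bound applied to $\varphi(v)$ (cf.\ \eqref{4.3.28}--\eqref{4.3.31}). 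Equivariance is free: because $\pi$ is unitary, $\varphi(\pi(g)w)=\pi(g)\varphi(w)$ and $e^{it\Delta}$ commutes with the $G$-action on $V$-valued functions, so the iterates remain in $H^1_\pi(\RR^n)$ and the fixed point $v$ lies in $C([T,\infty),H^1_\pi)$.

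Next I would obtain the weighted $L^2$ control needed to place $v(t)$ in $\Sigma_\pi$. Let $J(t)=x+2it\nabla$, which commutes with $i\partial_t+\Delta$ and satisfies $\|J(t)v(t)\|_{L^2}=\|xe^{-it\Delta}v(t)\|_{L^2}$. Applying $J$ to the integral equation yields, after careful handling of $\varphi$,
\begin{equation*}
J(t)v(t)=e^{it\Delta}(xv_+)-i\int_t^{\infty} e^{i(t-s)\Delta}\bigl(\partial_z\varphi(v)J(s)v+\partial_{\bar z}\varphi(v)\overline{J(s)v}\bigr)\,ds,
\end{equation*}
and a parallel Strichartz fixed point on $[T,\infty)$ produces $J(\cdot)v\in C([T,\infty),L^2)$ together with $\lim_{t\to\infty}\|J(t)v(t)-xv_+\|_{L^2}=0$. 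Combined with the Step 1 convergence $e^{-it\Delta}v(t)\to v_+$ in $H^1$, this yields the required $\Sigma_\pi$ convergence. Finally, at $t=T$ one has $v(T)\in\Sigma_\pi$ with $\|v(T)\|_{L^2}=\|v_+\|_{L^2}<\|Q_\pi\|_{L^2}$ (conservation of mass in the fixed point), so Proposition \ref{prop:NLSmcge1} extends $v$ to a global $H^1_\pi$-solution and Proposition \ref{prop:pcwp} propagates the weighted-$L^2$ regularity to give $v\in C(\RR,\Sigma_\pi)$; uniqueness follows from the uniqueness in the fixed point plus local uniqueness in $H^1$.

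The main obstacle is the rigorous treatment of the $J(t)v$ equation in Step 2, because $\varphi(w)=|w|^{4/n}w$ has only limited pointwise smoothness when $n\ge 5$, so the formal chain rule $J(t)\varphi(v)=\partial_z\varphi(v)J(t)v+\partial_{\bar z}\varphi(v)\overline{J(t)v}$ requires justification. The standard route, which I expect to adapt here, is to approximate $\varphi$ by a smooth truncated nonlinearity, derive the corresponding uniform Strichartz bounds on $J(t)v_\varepsilon$, and pass to the limit using the conservation law \eqref{eqn:pscl} (equivalently \eqref{4.3.3}) to identify the limit, exactly as in the proof of Proposition \ref{prop:pcwp}. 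The symmetry hypothesis causes no new trouble because the truncation can be performed as a function of $|w|^2$, which preserves $G$-equivariance.
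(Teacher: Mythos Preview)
Your proposal is correct and follows essentially the same route as the paper's proof: a Strichartz contraction on a half-line $[T,\infty)$ (the paper cites Cazenave, Chapter 7, for this step, including the $(x+2it\nabla)$ control), equivariance to keep the fixed point in $\Sigma_\pi$, mass conservation to bring $\|v(T)\|_{L^2}$ below $\|Q_\pi\|_{L^2}$, and then the global existence result of \S\ref{s4.1} to extend backward, with uniqueness via the fixed point plus backward Cauchy well-posedness. Your sketch is in fact more detailed than the paper's, which outsources the half-line construction and the $J(t)$ bounds to \cite{Caz_book}; your explicit discussion of the limited smoothness of $\varphi$ for $n\ge 5$ and the approximation route is a welcome addition that the paper does not spell out.
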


\begin{proof}
The argument on pp.~221--223 of \cite{Caz_book} yields $S<\infty$ and $v(t)$,
defined initially for $t\in [S,\infty)=I$, satisfying

\beq
\aligned
v&\in L^q(I,H^{1,q}(\RR^n)), \\
(x+it\nabla)v&\in L^q(I\times\RR^n), \\
\|v(t)\|_{L^q}&\le C|t|^{-2/q},
\endaligned
\label{4.3.X}
\eeq
with $q=2(n+2)/n$, as in \eqref{4.3.30}, such that
\beq
v(t)=e^{it\Delta}v_+ +i\int_t^\infty e^{i(t-s)\Delta}\varphi(v(s))\, ds,
\quad t\ge S,
\eeq
with $\varphi(v)=|v|^{4/n}v$, as in \eqref{4.3.23}.
The solution is produced as the fixed point of a contraction mapping,
and is unique.
This argument works equally well for the focusing equation
\eqref{eqn:NLSmc} as for its defocusing counterpart, which
was the main focus of Theorem 7.4.4 of \cite{Caz_book}.  It does not
require our hypothesized bound on $\|v_+\|_{L^2}$.  As further noted in
\cite{Caz_book}, such $v$ has the properties
\beq
v\in C(I,H^1(\RR^n)),\quad (x+it\nabla)v\in C(I,L^2(\RR^n)),
\eeq
hence $v(t)\in H^1(\RR^n)\cap H^{0,1}(\RR^n)$ for each $t\ge S$.  Symmetry
considerations guarantee that, in our setting, where $v_+\in\Sigma_\pi$,
\beq
v(t)\in\Sigma_\pi,\quad \forall\, t\ge S.
\eeq
Furthermore, as observed in \cite{Caz_book}, one has
\beq
\|e^{-it\Delta}v(t)-v_+\|_{H^1\cap H^{0,1}}\longrightarrow 0,\quad \text{as }\
t\rightarrow +\infty.
\eeq
A fortiori, convergence holds in $L^2(\RR^n)$, and conservation of mass
implies
\beq
\|v(t)\|_{L^2}=\|v_+\|_{L^2},\quad \forall\, t\ge S.
\eeq
Given that $\|v_+\|_{L^2}<\|Q_\pi\|_{L^2}$, the global existence results
of \S{\ref{s4.1}} finish off the proof of Proposition \ref{p4.3.E}.
Uniqueness for $v$ for all $t$ follows from its uniqueness for $t\ge S$ large,
via well posedness of the backward Cauchy problem.
\end{proof}

\appendix
\section{Auxiliary results}
\label{append}

As advertised in the Introduction, we have three appendices.  The first
gives some concrete criteria for the nonvanishing of $V_0$
(cf.~\eqref{eqn:V0req}), hence for $H^1_\pi(M)\neq 0$.  The second takes
an explicit look at $V_0$ for some important representations of $SO(4),
SU(2)$, and $U(2)$.  The third introduces a more general geometrical setting,
with an eye to unifying the work on axial vortices in Section
\ref{sec:vortaxial} of this paper and that on weakly homogeneous spaces in
\cite{CMMT}.

 \subsection{Criterion that $H^1_\pi (M) \neq 0$}
 \label{app:H1pineq0}

As we have seen in Section \ref{sec:vortintro}, the space $H^1_\pi (M)$,
defined by \eqref{eqn:Hspi} is nonzero, for the class of spaces $M$ and
groups $G \subset SO (n)$ considered in Sections
\ref{sec:vortspher}-\ref{sec:vortaxial}, if and only if
\begin{equation}
\label{eqn:V0req}
V_0 = \{ \phi \in V \ : \ \pi (k) \phi = \phi, \ \forall k \in K \} \neq 0,
\end{equation}
with $K$ defined as in \eqref{eqn:Krep1}.  Here, we consider when this holds,
in a somewhat more general setting.

Let $G$ be a compact Lie group, $K \subset G$ a closed subgroup.  Then
$X = G/K$ has a Riemannian metric for which $G$ is a group of
isometries, acting transitively on $X$, and there is a $p_0 \in X$ such that
$K$ is the subgroup of $G$ fixing $p_0$.  In Section \ref{sec:vortintro}, $X$
is diffeomorphic to $\SS^{n-1}$, but we do not require this here.  We want to
find unitary representations $\pi$ of $G$ on a finite dimensional inner
product space $V$ with the property \eqref{eqn:V0req}.  Note that if $\pi$ is
reducible and $P_j$ are orthogonal projections onto irreducible components
$V_j$ ($\sum P_j = I$), then $\phi \in V_0$ implies $\phi_j = P_j \phi$
satisfies $\pi (k) \phi_j = \phi_j$, $\forall\, k \in K$, for each $j$, so it
suffices to consider irreducible representations.

To formulate our criterion, it is useful to bring in the regular
representation $R$ of $G$ on $L^2 (X)$, defined by
\begin{equation}
\label{eqn:regrepdef}
R(g) f(x) = f( g^{-1} x).
\end{equation}
This splits into an infinite sequence of irreducible unitary representations
$\rho_j$ of $G$ on $V_j \subset C^\infty (X) \subset L^2 (X)$ (each finite
dimensional).  If $G = SO (n)$, $K = SO (n-1)$, these are all mutually
inequivalent, but that is not always the case.  The following characterizes
which irreducible representations have the property \eqref{eqn:V0req}
(compare to for instance \cite{Zelo-book}, page $80$).

\begin{proposition}
\label{prop:app1}
An irreducible unitary representation $\pi$ of $G$ on $V$ satisfies
\eqref{eqn:V0req} if and only if it is equivalent to one of the following
unitary representations $\rho_j$ described above, i.e., $\pi$ is contained in
$L^2 (X)$.
\end{proposition}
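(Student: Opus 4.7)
The plan is to prove both directions via the explicit intertwining map displayed in \eqref{eqn:Phi}, which is a concrete incarnation of Frobenius reciprocity for the pair $(G,K)$.

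First I would dispatch the easy direction: assume $\pi$ is equivalent to some $\rho_j$ acting on $V_j\subset L^2(X)$. Elements of $V_j$ are functions $f$ on $X=G/K$, and a $K$-fixed vector is simply a $G$-translate of the ``zonal'' function obtained by averaging: given any nonzero $f\in V_j$, the vector
\begin{equation*}
\overline{f} = \int_K \rho_j(k) f\, dk
\end{equation*}
lies in $(V_j)_0$, and one shows $\overline{f}\neq 0$ for a suitable choice of $f$ by using that $V_j$ is spanned by the $G$-orbit of any nonzero element and picking $f$ so that $\overline{f}(p_0)\neq 0$. Equivalently, one invokes the fact that point evaluation at $p_0$ descends to a $K$-invariant linear functional on $V_j$ which, being nonzero by separation of points in an irreducible subspace, forces $(V_j)_0\neq 0$ via the inner product.

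For the main direction, suppose $\pi$ is an irreducible unitary representation of $G$ on $V$ with $V_0\neq 0$. Fix a unit $\phi\in V_0$ and define
\begin{equation*}
\Phi:V\longrightarrow C^\infty(X),\qquad
\Phi(\psi)(g\cdot p_0) = (\pi(g)\phi,\psi)_V,
\end{equation*}
exactly as in \eqref{eqn:Phi}. The key verifications are: (i) $\Phi$ is well-defined, because if $g_1p_0=g_2p_0$ then $g_2^{-1}g_1\in K$, so $\pi(g_2^{-1}g_1)\phi=\phi$ and hence $\pi(g_1)\phi=\pi(g_2)\phi$; (ii) $\Phi$ intertwines $\pi$ with the regular representation $R$, since
\begin{equation*}
\Phi(\pi(g')\psi)(g\cdot p_0) = (\pi(g)\phi,\pi(g')\psi) = (\pi(g'^{-1}g)\phi,\psi) = \bigl(R(g')\Phi(\psi)\bigr)(g\cdot p_0);
\end{equation*}
(iii) $\Phi$ is nonzero, because $\Phi(\phi)(p_0)=\|\phi\|^2\neq 0$.

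By Schur's lemma (irreducibility of $\pi$), the kernel of $\Phi$ is either all of $V$ or $0$; step (iii) rules out the former, so $\Phi$ is injective. Thus $\Phi(V)\subset L^2(X)$ is a finite-dimensional $G$-invariant subspace on which $G$ acts equivalently to $\pi$, i.e., $\pi$ is equivalent to one of the $\rho_j$. The one subtlety worth flagging is that the decomposition of $L^2(X)$ into the $V_j$ is a priori only a Hilbert space direct sum, so one should note that $\Phi(V)$, being finite-dimensional and $G$-invariant, is automatically contained in the algebraic sum of finitely many isotypic components, after which complete reducibility for compact groups identifies $\Phi(V)$ with one of the $V_j$ (or with a copy inside a $\pi$-isotypic component of $L^2(X)$, which is all that the statement requires).
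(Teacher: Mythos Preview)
Your proof is correct and follows essentially the same route as the paper: for the forward direction you average over $K$ to produce a nonzero $K$-fixed vector (the paper first translates so that $\tilde\psi(p_0)\neq 0$, then averages, which is exactly your ``suitable choice of $f$''), and for the converse you use the same intertwining map $\Phi(\psi)(g\cdot p_0)=(\pi(g)\phi,\psi)$ as in \eqref{eqn:Phiapp1}--\eqref{eqn:ReqPhiapp1}. Your additional remarks on well-definedness, Schur's lemma for injectivity, and the Hilbert-versus-algebraic direct sum issue are details the paper leaves implicit but are welcome clarifications.
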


\begin{proof}
First, we note that each representation $\rho_j$ of $G$ on $V_j$ has the
property \eqref{eqn:V0req}.  Take a nonzero $\psi \in V_j$.  Pick $p_1 \in X$
such that $\psi (p_1) \neq 0$.  Then take $g_1 \in G$ such that $g_1 p_1 =
p_0$ and set $\tilde{\psi} = \rho_j (g_1) \psi$, i.e., $\tilde{\psi} (x) =
\psi (g_1^{-1} x)$, so $\tilde{\psi} (p_0) \neq 0$.  Now take $\phi = \int_K
\rho_j (k) \tilde{\psi}\, dk$.  We have
\begin{equation}
\label{eqn:phiapp1}
\phi (p_0) = \tilde{\psi} (p_0) \neq 0, \quad \rho_j (k) \phi = \phi,
\quad \forall\, k \in K.
\end{equation}

For the converse, if $\pi$ is an irreducible unitary representation on $V$
and \eqref{eqn:V0req} holds, then $\pi$ is equivalent to the restriction of
$R$ to the linear subspace of $C^\infty (X)$ that is the range of
\begin{equation}
\label{eqn:Phiapp1}
\Phi : V \to C^\infty (X) , \ \ \Phi (\psi) (g \cdot p_0) = ( \pi (g) \phi,
\psi).
\end{equation}
In fact, given $h \in G$, $\psi \in V$
\beq
\aligned
R(h) \Phi (\psi ) (g \cdot p_0) & = ( \pi ( h^{-1} g ) \phi, \psi )  \\
& = (\pi (g) \phi, \pi (h) \psi) \\
& = \Phi (\pi(h) \psi) (g \cdot p_0).
\endaligned
\label{eqn:ReqPhiapp1}
\eeq
\end{proof}

\begin{remark}
Using the Weyl orthogonality relations, \cite{Zelo-book} shows that the
multiplicity with which an irreducible $(\pi, V)$ is contained in $L^2 (X)$
is equal to $\dim V_0$, where $V_0$ is as in \eqref{eqn:V0req}.  In fact, if
we set
\begin{equation}
\label{eqn:Psimapapp1}
\Psi_\pi : V_0 \otimes V \to L^2(X), \quad \Psi_\pi ( \phi \otimes \psi) (g
\cdot p_0) = (\pi (g) \phi, \psi),
\end{equation}
then the range $L_\pi$ of $\Psi_\pi$ is the subspace of $L^2 (X)$ on which
the regular representation $R$ acts like the copies of $\pi$, and $\Psi_\pi:
V_0 \otimes V \to L_\pi$ is an isomorphism.
\end{remark}

 \subsection{Examples of $(\pi, V)$ when $n = 4$}
 \label{app:4d}
 
 Here we record examples of irreducible representations $\pi$ of $G$ on $V$,
 for which
 \begin{equation}
 \label{eqn:V0app2}
 V_0 = \{ \phi \in V \ : \ \pi (k) \phi = \phi, \ \forall k \in K \} \neq 0,
 \end{equation}
 in the following cases:
 \begin{align}
 G & = SO (4),  \quad K = SO (3), \label{eqn:app2case1} \\
 G & = SU (2),  \quad K = \{ I \} , \label{eqn:app2case2}  \\
  G & = U (2),  \quad K = U (1). \label{eqn:app2case3} 
  \end{align} 
 In all cases, $G$ acts transitively on $\SS^3$, the unit sphere in $\RR^4$.
 Also, in all cases, these representations will be contained in the regular
 representation of $G$ on $L^2 (\SS^3)$, and in fact $L^2 (\SS^3)$ breaks into
 direct sums of such representations (sometimes with multiplicity).
 
 As we have seen, the eigenspaces of the Laplace operator on $L^2 (\SS^3)$
 are irreducible for the $SO(4)$ action, and in each case, $\dim V_0 = 1$.
 We give an alternative description of these representations.  For this, it
 is convenient to note that $\SS^3$ is isometric to $SU(2)$, with a natural
 bi-invariant metric tensor.  In particular, $SU(2) \times SU(2)$ acts on
 $SU(2)$ as a group of isometries, via
 \begin{equation}
 \label{eqn:SU2mapapp2}
 (g,h) \cdot x = gxh^{-1}, \quad g,h,x \in SU(2),
 \end{equation}
 and $(g,h)$ acts as the identity precisely for $g=h=I$ and $g=h=-I$, so we
 have a $2$-fold covering
 \begin{equation}
 \label{eqn:SU2SO4app2}
 SU (2) \times SU (2) \to SO (4).
 \end{equation}
 Irreducible representations of $SO(4)$ compose with \eqref{eqn:SU2SO4app2} to
 give irreducible representations of $SU(2) \times SU(2)$.  All the irreducible
 of $SU(2) \times SU(2)$ are of the form
 \begin{equation}
 \label{eqn:pitimespiapp2}
 \pi_{jk} (g,h) = \pi_j (g) \otimes \pi_k (h),
 \end{equation}
 where $\{ \pi_j \}$ are the irreducible representations of $SU(2)$.  It is the
 standard to classify the irreducible representations of $SU (2)$ as
 \begin{equation}
 \label{eqn:dj2app2}
 D_{j/2}, \ \text{acting on} \ \CC^{j+1}, \ j \in \{ 0,1,2,\dots \},
 \end{equation}
 so the irreducible representations of $SU (2) \times SU (2)$ are
 \begin{equation}
 \label{eqn:pidtimesdapp2}
 \pi_{jk} (g,h) = D_{j/2} (g) \otimes D_{k/2} (h) , \ \text{on} \
 \mathcal{P}_{jk} = \CC^{j+1} \otimes \CC^{k+1}.
 \end{equation}
 Such a representation descends to $SO(4)$ if and only if $j/2-k/2$ is an
 integer, i.e. if and only if $j$ and $k$ have the same parity.
 
 Now, the injection $SO(3) \hookrightarrow SO(4)$ lifts to
 \begin{equation}
 \label{eqn:SO3app2}
 SU(2) \hookrightarrow SU(2) \times SU(2), \quad g \to (g,g).
 \end{equation}
 Hence, for $V = \mathcal{P}_{jk}$, we have $V_0 \neq 0$ if and only if the
 representation $D_{j/2} \otimes D_{k/2}$ of $SU(2)$ contains the trivial
 representation $D_0$.  Now, we have the Clebsch-Gordan series
 \begin{equation}
 \label{eqn:ClGoapp2}
 D_{j/2} \otimes D_{k/2} \approx D_{|j-k|/2} \oplus \cdots \oplus D_{(j+k)/2},
 \end{equation}
 so this contains $D_0$ if and only if $j = k$.  Hence, the irreducible
 representations of $SO(4)$ for which $V_0 \neq 0$ (hence, those occuring in
 $L^2 (\SS^3)$) are precisely
 \begin{equation}
 \label{eqn:pijjapp2}
 \pi_{jj} (g,h) = D_{j/2} (g) \otimes D_{j/2} (h), \ j \in \{ 0,1,2,\dots \},
 \end{equation}
 pushed from $SU(2) \times SU(2)$ down to $SO (4)$.  
 
 The regular representation of $SO(4)$ on the $j$-th eigenspace of $\Delta$
 (starting with $j=0$) is equivalent to $\pi_{jj}$.  Let us also recall that
 the $j$-th eigenspace is equal to the set of restrictions to $\SS^3$ of the
 space $\calH_j$ of harmonic polynomials on $\RR^4$, homogeneous of degree $j$.

 The injection $SU(2) \hookrightarrow SO(4)$ lifts to
 \begin{equation}
 \label{eqn:liftSU2app2}
 SU(2) \hookrightarrow SU(2) \times SU(2), \quad g \to (I,g).
 \end{equation}
 When the representation $\pi_{jj}$ of $SO(4)$ on $\mathcal{P}_{jj} \approx
 \calH_j$ is restricted to $SU(2)$, it breaks up into $j+1$ copies of the
 representation $D_{j/2}$ of $SU (2)$.  For $j = 0$, $\calH_0 \approx \CC$ and
 $\pi_{00}$ is trivial.  For $j = 1$, we have the following decomposition.
 Write the linear functions on $\RR^4$ as $x_1,y_1,x_2,y_2$ and set $z_1 =
 x_1 + i y_1$, $z_2 = x_2 + i y_2$.  Then,
 \begin{equation}
 \label{eqn:HplusHapp2}
 \calH_1 = \calH_{1,1} \oplus \calH_{1,-1},
 \end{equation}
 with
 \begin{align}
 \label{eqn:H11app2}
 \calH_{1,1} & = \Span \{ z_1, \ z_2 \}, \\
 \label{eqn:H1-1app2}
 \calH_{1,-1} & = \Span \{ \bar{z}_1, \ \bar{z}_2 \} .
 \end{align}
 
 For $j = 2$, we have
 \begin{equation}
 \label{eqn:H2app2}
 \calH_2 = \calH_{2,0} \oplus \calH_{2,2} \oplus H_{2,-2},
 \end{equation}
 with
  \begin{align}
 \label{eqn:H20app2}
 \calH_{2,0} & = \Span \{ |z_1|^2 - |z_2|^2, \ z_1 \bar{z}_2, \ \bar{z}_1
 z_2 \}, \\
 \label{eqn:H22app2}
 \calH_{2,2} & = \Span \{ z_1^2, \ z_2^2, \ z_1 z_2 \} , \\
  \label{eqn:H2-2app2}
 \calH_{2,-2} & = \Span \{ \bar{z}_1^2, \ \bar{z}_2^2, \ \bar{z}_1
 \bar{z}_2 \} .
  \end{align}
 The groups $SU(2)$ and $U(2)$ both act, irreducibly, on each space
 $\calH_{j,m}$ listed above.  The actions of $SU(2)$ on $\calH_{1,m}$
 ($m = 1, -1$) are equivalent, as are the actions of $SU(2)$ on $\calH_{2,m}$
 ($m = 2, 0 , -2$) and for $G = SU(2)$, $V = \calH_{j,m}$ $\Rightarrow V_0 =
 V$.
 
 On the other hand, the actions of $U(2)$ on the two spaces $\calH_{1,m}$ are
 not equivalent, nor are those of $U(2)$ on the three spaces $\calH_{2,m}$.
 In each case,
 \begin{equation}
 \label{eqn:pcalHapp2}
 p \in \calH_{j,m} \ \Longleftrightarrow \ p \in \calH_j \ \text{and} \
 p(e^{i \theta} z) = e^{ i m \theta} p(z).
 \end{equation}
 Also, for $G = U(2)$, $V = \calH_{j,m}$, the space $V_0$ is one dimensional,
 and is the span of the first element in the spanning set of $\calH_{j,m}$,
 as listed in \eqref{eqn:H11app2}--\eqref{eqn:H2-2app2},
 given the $U(1)$ action on $\RR^4 = \CC^2$ is
 \begin{equation}
 \label{eqn:CCrotapp2}
 (z_1,z_2) \to (z_1, e^{i \theta} z_2).
 \end{equation}
 
 The results \eqref{eqn:HplusHapp2} and \eqref{eqn:H2app2} are special cases of
 the following.
 
 \begin{proposition}
 \label{prop:app2}
 For $j \geq 1$,
 \begin{align}
 \label{eqn:calHdecompapp2}
 \calH_j & = \calH_{j,-j} \oplus \calH_{j,-j+2} \oplus \cdots \oplus
 \calH_{j,j} \\
 & = \bigoplus_{\ell = 0}^j \calH_{j,2 \ell - j}, \notag
 \end{align}
 with $\calH_{j,m}$ given by \eqref{eqn:pcalHapp2}.  The group $SU(2)$ acts on
 each factor $\calH_{j, 2 \ell - j}$ as $D_{j/2}$.
 For $V = \calH_{j, 2 \ell -j}$, $0 \leq \ell \leq j$, we have $\dim V_0 = 1$,
 if $G = U(2)$, and $V_0 = \calH_{j, 2 \ell -j }$ if $G = SU(2)$.
 \end{proposition}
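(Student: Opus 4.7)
The plan is to extract \eqref{eqn:calHdecompapp2} from the action of the center of $U(2)$ and then identify the $SU(2)$-structure via the $SU(2)\times SU(2)$-decomposition of $\calH_j$ recalled earlier. The center $\{e^{i\theta}I:\theta\in\RR\}\subset U(2)$ commutes with $\Delta_{\RR^4}$, so $\calH_j$ is stable under its $U(1)$-action and splits into weight spaces. A monomial $z_1^{a_1}\bar z_1^{b_1}z_2^{a_2}\bar z_2^{b_2}$ of total degree $j$ has weight $m=(a_1+a_2)-(b_1+b_2)$, which has the same parity as $j$ and satisfies $|m|\le j$, so only $m=2\ell-j$ with $0\le\ell\le j$ can occur. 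This produces \eqref{eqn:calHdecompapp2}, with each $\calH_{j,m}$ characterized by \eqref{eqn:pcalHapp2}.

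To compute $\dim \calH_{j,m}$, I would invoke the Fischer decomposition of bi-homogeneous polynomials on $\CC^2$. With $p=(j+m)/2$ and $q=(j-m)/2$, every polynomial of bi-degree $(p,q)$ has a unique decomposition $H+|z|^2Q$ with $H\in\calH_{j,m}$ and $Q$ of bi-degree $(p-1,q-1)$, so
\beq
\dim\calH_{j,m}=(p+1)(q+1)-pq=p+q+1=j+1.
\eeq
For the $SU(2)$-action, I would use the fact recalled before the proposition that $\calH_j\cong D_{j/2}\otimes D_{j/2}$ as an $SU(2)\times SU(2)$-representation. The natural $SU(2)\subset U(2)\subset SO(4)$ corresponds under \eqref{eqn:SU2SO4app2} to one of the two factors, so its restriction to $\calH_j$ is the isotypic $(j+1)\cdot D_{j/2}$. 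Each $\calH_{j,m}$ is $SU(2)$-invariant (the center of $U(2)$ commutes with $SU(2)$) and has dimension exactly $j+1$, so it must equal a single copy of $D_{j/2}$, since $D_{j/2}$ is the only irreducible type that occurs.

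For the $V_0$ claim, the case $G=SU(2)$ is immediate, as $K=\{I\}$ forces $V_0=V$. For $G=U(2)$, the subgroup $K=U(1)$ acts by $(z_1,z_2)\mapsto(z_1,e^{i\theta}z_2)$, so the $K$-invariant polynomials are exactly those in $z_1$, $\bar z_1$, and $|z_2|^2$. In bi-degree $(p,q)$ the $K$-invariants are spanned by the $\min(p,q)+1$ monomials $z_1^{p-a}\bar z_1^{q-a}|z_2|^{2a}$, $0\le a\le\min(p,q)$. Since $|z|^2$ is $K$-invariant, the iterated Fischer decomposition $\text{Pol}_{p,q}=\bigoplus_{k=0}^{\min(p,q)}|z|^{2k}\calH_{p-k,q-k}$ is $K$-equivariant, so
\beq
\min(p,q)+1=\sum_{k=0}^{\min(p,q)}\dim(\calH_{p-k,q-k})^K.
\eeq
A short induction on $\min(p,q)$, with base case $\min(p,q)=0$ where $\text{Pol}_{p,0}=\calH_{p,0}$ (resp.~$\text{Pol}_{0,q}=\calH_{0,q}$) has $K$-invariants $\Span\{z_1^p\}$ (resp.~$\Span\{\bar z_1^q\}$), then forces $\dim(\calH_{p,q})^K=1$ throughout.

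The main obstacle I anticipate is purely organizational: aligning the bi-degree $(p,q)$, the $U(1)$-weight $m=p-q$, and the parameter $\ell=p$, and ensuring that the $SU(2)\hookrightarrow SO(4)$ being restricted to corresponds to a single factor of the covering (either factor works for the isotypic count, whereas the diagonal would not). Once those are in place, the Fischer decomposition supplies both the dimension $j+1$ and, on a second invocation adapted to $K$-invariants, the one-dimensionality of $V_0$.
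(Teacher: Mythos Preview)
Your argument is correct, but it proceeds differently from the paper. The paper's proof hinges on a single geometric observation: the center $E(t)=\{e^{it}I\}\subset U(2)\subset SO(4)$ lifts under \eqref{eqn:SU2SO4app2} to $\tilde E(t)=(\mathfrak e(t),I)\in SU(2)\times SU(2)$, i.e.\ it lands entirely in the \emph{first} factor. Since $\calH_j\cong D_{j/2}\otimes D_{j/2}$ and the $SU(2)\subset U(2)$ sits in the second factor \eqref{eqn:liftSU2app2}, the $E(t)$-weight decomposition of $\calH_j$ is read off immediately from the weight decomposition of $D_{j/2}$: each weight $m\in\{-j,-j+2,\dots,j\}$ occurs, and the corresponding weight space is (one-dimensional weight line)$\,\otimes\, D_{j/2}$, giving both $\dim\calH_{j,m}=j+1$ and the $SU(2)$-type $D_{j/2}$ in one stroke. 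The $\dim V_0=1$ claim for $G=U(2)$ then follows from the ``observations above,'' in particular the multiplicity-equals-$\dim V_0$ remark after Proposition \ref{prop:app1}, once one notes the $\calH_{j,m}$ are pairwise inequivalent $U(2)$-modules (distinguished by $j$ and $m$).

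You bypass the lift computation entirely and instead establish $\dim\calH_{j,m}=j+1$ by the Fischer decomposition of bi-homogeneous polynomials, then deduce the $SU(2)$-type by dimension counting against the isotypic $(j+1)\cdot D_{j/2}$. For $\dim V_0=1$ you give a self-contained inductive argument, again via Fischer, rather than invoking the Frobenius-reciprocity-style multiplicity result. Your route is more algebraic and more explicit; the paper's is shorter and more conceptual, but leans on the lift calculation \eqref{eqn:tilEapp2} and on prior appendix material. Both are valid; your concern about ``which $SU(2)$ factor'' is exactly what the paper's lift computation resolves, so if you want to streamline, that is the one calculation worth importing.
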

 
 \begin{proof}

 All the claims follow from the observations above, provided we show that
 $\calH_{j, 2 \ell -j} \neq 0$ for each $\ell \in \{ 0, \dots, j \}$.  To show
 this, it is useful to see how the one-parameter group
 \begin{equation}
 \label{eqn:Etapp2}
 E(t) \subset U(2) \subset SO(4), \ \ E(t) z = e^{it } z,
 \end{equation}
 lifts to a one-parameter group $\tilde{E} (t)$ in $SU(2) \times SU(2)$, whose
 action on $\RR^4$ is given by left and right quaternionic multiplication:
\begin{equation}
\label{eqn:quatmultapp2}
(g,h) \cdot x = gxh^{-1}.
\end{equation}
Recall that the $SU(2)$ action is given by the right quaternionic
multiplication.  In this case, a calculation shows that
\begin{equation}
\label{eqn:tilEapp2}
\tilde{E} (t) = ( \mathfrak{e} (t) , I ), \quad \mathfrak{e} (t) =
\left( \begin{array}{cc}
\cos t & - \sin t \\
\sin t & \cos t 
\end{array} \right).
\end{equation}
Given that $SU(2) \times SU(2)$ acts on $\calH_j$ by \eqref{eqn:pijjapp2},
the fact that $\calH_j$ decomposes under the $\tilde{E} (t)$ action according
to \eqref{eqn:calHdecompapp2}, with each factor having dimension $j+1$,
follows from the standard results on the representation $D_{j/2}$ on
$\CC^{j+1}$.
\end{proof}

We remark that
\begin{equation}
\label{eqn:Hjjspanapp2}
\calH_{j,j} = \Span \{ z_1^a z_2^b \ : \ a,b \in \ZZ^+, \ a+b = j \}
\end{equation}
and
\begin{equation}
\label{eqn:Hj-mapp2}
\calH_{j,-m} = \{ \bar{f} \ : \ f \in \calH_{j,m} \}.
\end{equation}
If $V \in \calH_{j,j}$, then $V_0 = \Span \{ z_1^j \}$.  Generally, if
$0 \leq m \leq j$, a $U(1)$-invariant element of $\calH_{j,m}$ is a linear
combination of monomials
\begin{equation}
\label{eqn:pjmabapp2}
p_{jmab} (z) = z_1^m |z_1|^{2a} |z_2|^{2b}, \ a,b \in \ZZ^+, \ a+b =
\frac{ j-m}{2},
\end{equation}
with the property that such a linear combination is a harmonic polynomial.
A calculation yields
\begin{equation}
\label{eqn:lastapp2}
z_1^{j-2} \bigl( |z_1|^2 - (j-1) |z_2|^2 \bigr) \in \calH_{j,j-2},
\quad j \geq 2.
\end{equation}

 \subsection{Other geometrical settings}
 \label{app:etc}

Here we generalize the geometrical settings of Section \ref{sec:vortaxial},
in which we considered axial vortices.  Let $M$ be a smooth Riemannian
manifold, possibly with boundary, and two groups $G$ and $H$ both acting on
$M$ as groups of isometries.  We assume
\begin{equation}
\label{eqn:GHapp3}
G \ \text{is compact}, \ \text{and the actions of} \ G,H \ \text{commute}.
\end{equation}
We take a base point $q_0 \in M$ and assume the following.
\beq
\aligned
& \text{For each $R \in (0,\infty)$, there exists $K \in (0,\infty)$ such
that}  \\
& \text{if $\Omega \subset M$ is $G$-invariant and $\diam (\Omega) \leq R$}, \\
& \text{then there exists $h \in H$ such that $h \cdot \Omega \subset B_K
(q_0)$}.
\endaligned
\label{eqn:assumapp3}
\eeq
For $M = \RR^{n+k}$, we can take $G = SO (n)$ and $H \approx \RR^k$.  If $G$
is trivial, the hypothesis \eqref{eqn:assumapp3} is that $H$ makes $M$ a
``weakly homogeneous space'' as defined in \cite{CMMT}.  At the other extreme,
if $H$ is trivial, \eqref{eqn:assumapp3} says that for each $R \in (0,\infty)$,
there exists $K \in (0,\infty)$ such that if $\Omega \subset M$ is
$G$-invariant and $\diam (\Omega) \leq R$, then $\Omega \subset B_K (q_0)$.
Compare hypothesis \eqref{eqn:diamgrowth}.

Here is another family of examples satisfying
\eqref{eqn:GHapp3}--\eqref{eqn:assumapp3}.  Let $\HH^{n+1}$ be the
$(n+1)$-dimensional hyperbolic space, e.g. $\{ x \in \RR^{n+1} \ : \ x_{n+1}
> 0 \}$ with metric tensor
\begin{equation}
\label{eqn:hypmetapp3}
ds^2 = x_{n+1}^{-2} \sum_{k=1}^{n+1} dx_k^2.
\end{equation}
Take $G = SO(n)$ acting on $(x_1, \dots , x_n) \in \RR^n$ (or $G$ could be a
subgroup of $SO(n)$ acting transitively on $\SS^{n-1}$).  Then, we take $H$
to be the group of dilations
\begin{equation}
\label{eqn:dilateapp3}
\delta_r (x) = rx , \ r \in (0,\infty).
\end{equation}

We make one further hypothesis, satisfied by $\HH^{n+1}$ and by weakly
homogeneous spaces:
\begin{equation}
\label{eqn:Mbdapp3}
M \ \text{has bounded geometry}.
\end{equation}

Given a unitary reresentation $\pi$ of $G$ on a finite dimensional inner
product space $V$ we have the space $H^1_\pi (M)$ as defined in
\eqref{eqn:Hspi}.  We have the following criterion implying that
$H^1_\pi (M) \neq 0$.  Take $p_0 \in M$ (not necessarily the same as $q_0$).
Let
\begin{equation}
\label{eqn:Kdefapp3}
K = \{ k \in G \ : \ k p_0 = p_0 \},
\end{equation}
and 
\begin{equation}
\label{eqn:V0app3}
V_0 = \{ \phi \in V \ : \ \pi (k) \phi  = \phi, \ \forall k \in K \}.
\end{equation}
Then, assume
\begin{equation}
\label{eqn:piirredapp3}
\pi \ \text{irreducible}, \ \ V_0 \neq 0.
\end{equation}

\begin{lemma}
If \eqref{eqn:piirredapp3} holds, then $H^1_\pi (M) \neq 0$.
\end{lemma}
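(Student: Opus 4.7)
The plan is to construct a nonzero element of $H^1_\pi(M)$ directly, by producing a smooth, compactly supported, $G$-equivariant $V$-valued function; only the condition $V_0 \neq 0$ is used (irreducibility of $\pi$ plays no role in this direction). Fix any $\phi \in V_0 \setminus \{0\}$, and assume (as one may) that $p_0$ lies in the interior of $M$, so that the compact orbit $\mathcal{O} := G \cdot p_0 \cong G/K$ is a compact embedded submanifold disjoint from $\partial M$. The formula
\[
v(g \cdot p_0) = \pi(g)\phi
\]
defines a smooth $G$-equivariant map $v : \mathcal{O} \to V$, since $g_1 p_0 = g_2 p_0$ forces $g_2^{-1} g_1 \in K$, and then $\pi(g_2^{-1} g_1) \phi = \phi$ by the defining property of $V_0$, so $\pi(g_1) \phi = \pi(g_2) \phi$.

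To extend $v$ off the orbit I would invoke a $G$-equivariant tubular neighborhood: because $G$ acts on $M$ by isometries, the normal bundle of $\mathcal{O}$ is a $G$-equivariant vector bundle, and the Riemannian exponential map of $M$ carries a $G$-invariant neighborhood of its zero section diffeomorphically onto a $G$-invariant tubular neighborhood $U$ of $\mathcal{O}$, producing a smooth $G$-equivariant retraction $\Pi : U \to \mathcal{O}$. The function $\rho(x) := \dist(x, \mathcal{O})$ is then $G$-invariant. Choose $\epsilon > 0$ so small that $\{\rho < \epsilon\}$ is relatively compact in $U$ and disjoint from $\partial M$, and fix $\chi \in C_c^\infty([0,\infty))$ with $\chi \equiv 1$ on $[0,\epsilon/2]$ and $\supp \chi \subset [0, \epsilon)$. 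Set
\[
u(x) = \chi(\rho(x))\, v(\Pi(x)) \quad \text{on } U, \qquad u(x) = 0 \quad \text{off } U.
\]
Because $\chi$ is constant near $0$, any nonsmoothness of $\rho$ along $\mathcal{O}$ is irrelevant, and $u$ is smooth and compactly supported in the interior of $M$. The $G$-invariance of $\rho$ and the $G$-equivariance of $v \circ \Pi$ give $u(gx) = \pi(g) u(x)$ for all $g \in G$ and $x \in M$, while $u(p_0) = \chi(0)\, v(p_0) = \phi \neq 0$. Hence $u$ is a nonzero element of $H^1_\pi(M)$, in either the unrestricted or the Dirichlet version of the definition.

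The only geometrically nontrivial ingredient is the existence of the $G$-equivariant tubular neighborhood of the compact orbit $\mathcal{O}$. This is standard for a compact Lie group acting smoothly by isometries, and the bounded-geometry hypothesis \eqref{eqn:Mbdapp3} ensures a uniform positive injectivity radius along $\mathcal{O}$, so no delicate analysis is required. Note that neither the commuting $H$-action nor hypothesis \eqref{eqn:assumapp3} from the preceding setup plays any role in this lemma.
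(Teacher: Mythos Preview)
Your proof is correct and takes a genuinely different route from the paper's. The paper begins the same way---defining $v(g\cdot p_0)=\pi(g)\phi$ on the orbit $\mathcal{O}_{p_0}$---but then extends $v$ \emph{arbitrarily} to some $v\in C_c^\infty(M,V)$ and recovers an element of $H^1_\pi(M)$ by averaging against the character of $\pi$:
\[
u(x)=(\dim V)\int_G v(g^{-1}x)\,\overline{\Tr\pi(g)}\,dg,
\]
using the Schur orthogonality relations to verify $u=v$ on $\mathcal{O}_{p_0}$, hence $u\neq 0$. Your construction instead builds the equivariance directly into the extension via the $G$-equivariant tubular neighborhood and retraction $\Pi$, so no averaging step is needed. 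The paper's argument is more algebraic and leans on the irreducibility of $\pi$ through the orthogonality relations; yours is more geometric and, as you rightly note, uses only $V_0\neq 0$, so it actually proves a slightly stronger statement. Each approach has its virtue: the paper's avoids any discussion of the normal bundle, while yours yields an explicit local description of $u$ and makes the role of the hypotheses more transparent.
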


\begin{proof}
Given a nonzero $\phi \in V_0$, $v(g \cdot p_0) = \pi (g) \phi$ gives a
well-defined $C^\infty$ function (with values in $V$) on $\mathcal{O}_{p_0}$,
the $G$-orbit of $p_0$, which is a smooth, compact submanifold of $M$.
Extend this function to $v \in C_0^\infty (M,V)$.  Then, set
\begin{equation}
\label{eqn:udefapp3}
u(x) = (\dim V) \int_G v (g^{-1} x) \overline{ \Tr \pi (g) }\, dg.
\end{equation}
We have $u \in C_0^\infty (M) \cap H^1_\pi (M)$ and $u = v$ on
$\mathcal{O}_{p_0}$.
\end{proof}

Under the hypotheses \eqref{eqn:GHapp3}, \eqref{eqn:assumapp3} and
\eqref{eqn:Mbdapp3}, given $H^1_\pi (M) \neq 0$, one can use concentration
compactness arguments similar to those in Sections \ref{sec:vortspherenmin}
and \ref{sec:vortaxialFlambda} to establish the following.  If
\begin{equation}
\label{eqn:pbdsapp3}
1 < p < \frac{m+2}{m-2}, \quad m = \dim M,
\end{equation}
and
\begin{equation}
\label{eqn:specapp3}
\Spec (-\Delta) \subset [\delta, \infty),\quad \lambda > -\delta,
\end{equation}
then, given $\beta > 0$, $F_\lambda$ and $J_p$ as in
\eqref{eqn:Flambda}--\eqref{eqn:Jp}, one can find $u \in H^1_\pi (M)$
achieving
\begin{equation}
\label{eqn:IIapp3}
\II (\beta, \pi) = \inf \{ F_\lambda (u) \ : \ u \in H^1_\pi (M), \ J_p (u)
= \beta \}.
\end{equation}
On the other hand, if
\begin{equation}
\label{eqn:pbdsapp3-1}
1< p < 1 + \frac{4}{m},
\end{equation}
$\beta > 0$, and $E(u)$ and $Q(u)$ are as in \eqref{eqn:energy} and
\eqref{eqn:charge}, one can find $u \in H^1_\pi (M)$ achieving
\begin{equation}
\label{eqn:EEapp3}
\EE (\beta, \pi) = \inf \{ E(u) \ : \ u \in H^1_\pi (M), \ Q (u)  = \beta \}, 
\end{equation}
provided
\begin{equation}
\label{eqn:EEbdapp3}
\EE (\beta, \pi) < 0.
\end{equation}
Modifications of arguments of Sections \ref{sec:vortspherenmin}
and \ref{sec:vortaxialFlambda} to prove these results
are left to the reader.


\end{document}